\documentclass[12pt,leqno]{article}
\usepackage[utf8]{inputenc}
\usepackage[T1]{fontenc}
\usepackage[UKenglish]{babel}
\usepackage[a4paper,margin=1in]{geometry}

\usepackage{amsmath,amsthm,amsfonts,amssymb}
\usepackage{mathtools}
\usepackage{color,enumerate}
\usepackage{mathrsfs,dsfont}
\usepackage{url,verbatim}

\pagestyle{plain}

\theoremstyle{plain}
\newtheorem{theorem}{Theorem}
\newtheorem{lemma}[theorem]{Lemma}
\newtheorem{proposition}[theorem]{Proposition}

\theoremstyle{definition}

\newtheorem*{notation}{Notation}

\numberwithin{theorem}{section}
\numberwithin{equation}{section}

\newcommand{\Xcal}{\mathcal{X}}
\newcommand{\Lcal}{\mathcal{L}}

\newcommand{\real}{\mathds{R}}
\newcommand{\nat}{\mathds{N}}
\newcommand{\Ee}{\mathds{E}}
\newcommand{\Pp}{\mathds{P}}
\newcommand{\Ww}{\mathds{W}}
\newcommand{\I}{\mathds{1}}
\newcommand{\Normal}{\mathsf{N}}

\newcommand{\Cscr}{\mathscr{C}}
\newcommand{\Escr}{\mathscr{E}}
\newcommand{\Dscr}{\mathscr{D}}
\newcommand{\Mscr}{\mathscr{M}}
\newcommand{\Pscr}{\mathscr{P}}

\newcommand{\dd}{\mathrm{d}}
\newcommand{\eup}{\mathrm{e}}

\newcommand{\Lip}{\mathrm{Lip}}

\DeclareMathOperator{\divi}{div}



\newcommand{\normal}{\color{black}}
\usepackage{marginnote}

\marginparwidth30pt 

\begin{document}
\allowdisplaybreaks

\title{\textbf{Exact rate of convergence for the empirical measure of a subordinated process in $p$-Wasserstein distance}}

\author{\textbf{Ren\'{e} L. Schilling$^{a)}$, Bingyao Wu$^{b),a)}$}\\
\footnotesize{ $^{a)}$ Institut f\"{u}r Mathematische Stochastik, Fakult\"{a}t Mathematik, TU Dresden,  Dresden 01062, Germany}\\
\footnotesize{$^{b)}$School of Mathematics and Statistics, Fujian Normal University, Fuzhou 350007, China}\\\\
\footnotesize{rene.schilling@tu-dresden.de, bingyaowu@163.com}}

\date{}

\maketitle

\begin{abstract}\noindent
	We establish exact rates of convergence in the $p$-Wasserstein distance for the empirical measure of a class of non-symmetric  jump processes, which are subordinated to a  diffusion process on a compact Riemannian manifold. For the quadratic Wasserstein distance we determine the renormalization limit. We extend the main results of \cite{WW} and \cite{WWZ}. Our method uses two key elements: a Bernstein-type inequality for the subordinated process and the PDE approach established in \cite{AMB}.

	\medskip\noindent
	\textbf{MSC 2020:} \emph{Primary}  60J76. \emph{Secondary}  60F17. \\
	\smallskip\noindent
	\textbf{Keywords:} Empirical measure; Wasserstein distance; convergence rate;  fractional power; Bernstein-type inequality.
\end{abstract}

\section{Introduction and Main Results}
It is a classical problem in probability theory to estimate the convergence rate  w.r.t.\ Wasserstein distance  at which the empirical measure of an i.i.d.\ sample approaches the reference measure. This question has its origins in the well-known AKT (Ajtai--Koml\'os--Tusn\'ady) theorem, see \cite{AKT1984}. Let us give a brief overview, see \cite{AKT1984,AMB,L17,Ta}. Suppose that $(X_i)_{i\ge 1}$ are independent, uniformly distributed, random variables $[0,1]^d$ and write $\mu$ for the uniform distribution.   The empirical measure of the sample $(X_i)_{i\geq 1}$ is defined as
\begin{gather*}
	\mu_n:=\frac 1 n \sum_{i=1}^n \delta_{X_i}.
\end{gather*}
It is known that for every $1\leq p<\infty$ the $p$-Wasserstein distance satisfies
\begin{gather*}
	\Ee[\Ww_p^p(\mu_n,\mu)]
	\sim
	\begin{cases}
		n^{-\frac p 2}, &d=1,\\
		\left(\frac{\log n}{n}\right)^{\frac p 2}, &d=2,\\
		n^{-\frac p d}, & d\geq 3.
	\end{cases}
\end{gather*}
Using a PDE approach,  Ambrosio,  Stra and Trevisan \cite{AMB}  worked out  the correct renormalization limit for the case  $p=2$ and $d=2$:
\begin{gather*}
	\lim_{n\to\infty} \frac{n}{\log n} \Ee\left[\Ww_2^2(\mu_n,\mu)\right] = \frac 1 {4\pi}.
\end{gather*}
There are important further developments in the discrete-time setting: The random variables need not be independent, and the state space need not be compact. This is, e.g., done in \cite{AG2019,AVT,Borda,HGT,GT24,L17,LW-L,WZ-A,Zhu2021} and the references given there. To the best of our knowledge, these results  do not cover all $p\in [1,\infty)$  or the renormalization.

In the continuous-time setting there exists a vast literature, for example on diffusion processes \cite{MT24+,TWZ,W21,W22,WAAP,WJEMS,WWZ,WZ,Zhu}, subordinated diffusion processes \cite{LW1,LW2,LW3,Wang23NS,WW}, fractional Brownian motion \cite{HT} and McKean--Vlasov processes \cite{DU}. The exact $p$-Wasserstein convergence rates and the form of the quadratic Wasserstein convergence limits for asymmetric diffusion processes on compact Riemannian manifolds have been established in \cite{TWZ,WWZ}. In this paper we study the exact $p$-Wasserstein convergence rates and the quadratic Wasserstein convergence limits for a class  of jump processes obtained by subordination. In order to present our main results, we first introduce the basic framework and relevant notation.

Throughout this paper, $(M,\rho)$ is a $d$-dimensional connected compact Riemannian manifold with Riemannian distance $\rho$; if $M$ has a boundary $\partial M$, we assume that it is smooth. By $\dd x$ we denote the Riemannian volume measure on $M$. For $V\in C^2(M)$ we define $\mu(\dd x) = \eup^{V(x)}\,\dd x$ and we assume that $\mu$ is a probability measure. Let $Z:=\sum_{i=1}^d Z_i \frac{\partial}{\partial x_i}$ be a $C^1$-vector field, which is divergence free, i.e.\
\begin{gather*}
	\mu(Zf)
	:= \sum_{i=1}^d\int_M Z_i(x)\frac{\partial f(x)}{\partial x_i}\,\mu(\dd x)
	=\int_M \langle Z,\nabla f\rangle \,\dd\mu
	= 0,\quad f\in C^1(M).
\end{gather*}
As usual, we identify the vector field $Z$ with the vector $(Z_1,Z_2,\dots,Z_d)$. Let $(X_t)_{t\geq 0}$ be the (reflecting, if $\partial M\neq\emptyset$) diffusion process with infinitesimal generator $L:=\Delta+\nabla V\cdot \nabla$, where $\Delta$ and $\nabla$ are the Laplace--Beltrami operator and the gradient operator on $M$, respectively. Note that due to the choice of our measure $\mu$ we have the following integration-by-parts formula:
\begin{gather}\label{eq:ibp}
	\langle (-L)u,u\rangle_{L^2(\mu)} = \|\nabla u\|^2_{L^2(\mu)}.
\end{gather}

It is well-known that both $-(-L)^\alpha$, $\alpha \in (0,1]$, and $\Lcal^\alpha:=-(-L)^\alpha+Z$ are generators of stochastic processes, see e.g.~\cite[Chapter 13]{SSV}, \cite[Section 1.15.5, Section C.5]{BLG}. It is not difficult to check that the $L^2(\mu)$-adjoint $(\Lcal^\alpha)^*$ of $\Lcal^\alpha$ in $L^2(\mu)$ is of the form $-(-L)^\alpha-Z$. Therefore, the process $(\Xcal_t^\alpha)_{t\geq 0}$  generated by $\Lcal^\alpha$ is not symmetric in $L^2(\mu)$ unless $Z= 0$. Let $(P_t^\alpha)_{t\geq 0}$ and $(\widehat{P}_t^\alpha)_{t\geq 0}$ be the  semigroup generated by $\Lcal^\alpha$ and $-(-L)^\alpha$ respectively. The empirical measure of $(\Xcal_t^\alpha)_{t\geq 0}$ is given by
\begin{gather*}
	\mu_T^\alpha(\cdot)
	:= \frac 1 T \int_0^T \delta_{\Xcal_t^\alpha}(\cdot)\,\dd t,\quad T>0,
\end{gather*}
where $\delta_{y}(\cdot)$ is the Dirac measure at the point $y$. It is shown in \cite{Wu2}, that for any $f\in L^2(\mu)$, $f\neq 0$, with $\mu(f):=\int_Mf\,\dd\mu=0$, we have
\begin{gather*}
	\sqrt{T} \mu_T^\alpha(f) \xrightarrow{T\to\infty} \Normal(0, 2 \Sigma(f))\quad \text{weakly},
\end{gather*}
where $\Normal(0, 2 \Sigma(f))$ denotes the normal distribution with mean $0$ and variance $2\Sigma(f)$. The variance $\Sigma(f)$ is defined as
\begin{gather}\label{Sigma}
	\Sigma(f)
	:= \int_0^\infty \mu(fP_t^\alpha f)\,\dd t\in (0,\infty).
\end{gather}

By $\Pscr$ we denote the set of probability measure on $M$. The $p$-Wasserstein distance of $\mu,\nu\in\Pscr$ is defined as
\begin{gather*}
	\Ww_p^{p\vee 1}(\mu,\nu)
	:= \inf_{\pi\in\Cscr(\mu,\nu)}\left(\int_{M\times M}\rho(x,y)^p\,\pi(\dd x,\dd y)\right)^{\frac 1{p\vee 1}},
	\quad
	p>0,
\end{gather*}
where $\Cscr(\mu,\nu)$ is the set of all couplings of $\mu$ and $\nu$, i.e.\ $\pi$ is a probability measure on $M\times M$ such that $\mu$ and $\nu$ are its marginals.

By the Birkhoff ergodic theorem, see e.g.\ \cite[Theorem 9.6 and Theorem 9.8]{K} or \cite[Theorem 1.1]{S}, the empirical measure $\mu_T^\alpha$ converges weakly to  the  invariant measure $\mu$ as $T\to\infty$. As shown in \cite[Corollary 6.13]{V}, this is equivalent to, for any $p\ge 1$, $\Ww_p(\mu_T,\mu)\to 0$ as $T\to\infty$. It is, therefore, natural to study the rate of convergence of $\mu_T^\alpha$.

Since $M$ is compact and connected, the eigenvalues of $-L$ (with Neumann boundary conditions if $\partial M\neq\emptyset$) are discrete and can be listed (counting multiplicities) in increasing order as:
\begin{gather*}
	0 < \lambda_1 \leq \lambda_2 \leq \dots \leq \lambda_i\le\dots
\end{gather*}
By $\{\phi_i\}_{i\in\nat}\subset L^2(\mu)$ we denote the corresponding eigenfunctions of $-L$; moreover, we assume that $\|\phi_i\|_{L^2(\mu)}=1$. The eigenvalues satisfy the following quantitative estimate
\begin{gather}\label{lam}
	c_1 i^{\frac 2 d} \leq \lambda_i \leq c_2 i^{\frac 2 d},
	\quad i\in \nat,
\end{gather}
where $c_1,c_2$ are constants, which do not depend on $i$. The Markov semigroup $P_t$ generated by $L$ has a symmetric heat kernel $p_t$ with respect to $\mu$,  which is given by
\begin{gather}\label{HS2}
	p_t(x,y)
	= 1 + \sum_{i=1}^\infty \eup^{-\lambda_i t} \phi_i(x)\phi_i(y),
	\quad t>0, \; x,y\in M,
\end{gather}
We refer to \cite[Sections 1.7.2, A.4]{BLG} and \cite[Exercise III.38, Note III.15]{Chavel} as standard references.  We need a few properties of the semigroup $(P_t)_{t\geq 0}$, which can be found in \cite{BLG,W14,WZ,Zhu}. Since $M$ is a $d$-dimensional compact manifold, there exists some constant $c>0$ such that
\begin{gather*}
	\|P_t-\mu\|_{1\to\infty}
	\leq c(1\wedge t)^{-\frac d 2},
	\quad t>0.
\end{gather*}
Combining this with the contractivity of $(P_t)_{t\geq 0}$ on $L^p(\mu)$ for any $1\leq p\leq \infty$, and an interpolation argument \cite[Theorem 0.3.13]{W05}, we see that there exists some constant $c>0$ such that for any $1\leq p\leq q\le\infty$,
\begin{gather}\label{ULT}
	\|P_t-\mu\|_{p\to q}\leq c (1\wedge t)^{-\frac{d(q-p)}{2pq}}\eup^{-\lambda_1 t},\quad t>0,
\end{gather}
see e.g. \cite[(2.12)]{WJEMS}.

We can now present our main results on the Wasserstein convergence of $\mu_T^\alpha$. For any $(\alpha,d)\in(0,1]\times \nat$, and large enough $T$, define
\begin{gather*}
	\gamma_{\alpha,d}(T)
	:=
	\begin{cases}
	T^{-\frac 1 2},\quad &d<2(1+\alpha),\\
	T^{-\frac 1 2}\log^{\frac 1 2}T,\quad &d=2(1+\alpha),\\
	T^{-\frac 1 {d-2\alpha}},\quad &d>2(1+\alpha),
	\end{cases}
\end{gather*}

\begin{theorem}[Upper bound]\label{Wpq}
	Let $(\alpha,T,p,q)\in(0,1]\times (0,\infty)\times [1,\infty)\times (0,\infty)$ and denote by $q^*$ the conjugate exponent of $\frac pq\vee 1$. If $\nu = h_\nu \mu\in\Pscr$ for some where $h_\nu\in L^{q^*}(\mu)$, then
	\begin{gather*}
	\Ee^\nu[\Ww_p^q(\mu_{T}^\alpha,\mu)]
	\lesssim \|h_\nu\|_{L^{q^*}(\mu)} \gamma_{\alpha,d}(T)^q,\quad T \gg 1.
	\end{gather*}
\end{theorem}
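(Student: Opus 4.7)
The strategy is to adapt the PDE/smoothing approach of Ambrosio--Stra--Trevisan \cite{AMB} to the non-symmetric subordinated process $(\Xcal_t^\alpha)$, combined with a spectral decomposition on $M$ and a Bernstein-type moment inequality for $\mu_T^\alpha(f)$. First I would reduce to a stationary initial law: writing $\Ee^\nu[\,\cdot\,]=\Ee^\mu[h_\nu(\Xcal_0^\alpha)\,\cdot\,]$ and applying H\"older's inequality with exponents $q^*$ and $p/q\vee 1$ gives
\begin{gather*}
	\Ee^\nu[\Ww_p^q(\mu_T^\alpha,\mu)] \leq \|h_\nu\|_{L^{q^*}(\mu)}\bigl(\Ee^\mu[\Ww_p^{p\vee q}(\mu_T^\alpha,\mu)]\bigr)^{q/(p\vee q)}.
\end{gather*}
Since $M$ is compact, $\Ww_p$ is bounded by $\mathrm{diam}(M)$, so it suffices to prove $\Ee^\mu[\Ww_p^p(\mu_T^\alpha,\mu)]\lesssim\gamma_{\alpha,d}(T)^p$.

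Next I would introduce a regularisation parameter $s>0$ and the smoothed measure $\tilde\mu_T^s\in\Pscr$ with $\mu$-density
\begin{gather*}
	h_{T,s}(x) := \frac 1T\int_0^T p_s(\Xcal_t^\alpha,x)\,\dd t = 1+\sum_{i=1}^\infty \eup^{-\lambda_i s}\mu_T^\alpha(\phi_i)\phi_i(x).
\end{gather*}
A synchronous coupling via the diffusion generated by $L$ together with the Gaussian heat-kernel displacement estimate produces $\Ee^\mu[\Ww_p^p(\mu_T^\alpha,\tilde\mu_T^s)]\lesssim s^{p/2}$. For the remaining distance $\Ww_p(\tilde\mu_T^s,\mu)$, the $L^p$-version of the PDE bound from \cite{AMB} gives
\begin{gather*}
	\Ww_p^p(\tilde\mu_T^s,\mu) \lesssim \bigl\|\nabla(-L)^{-1}(h_{T,s}-1)\bigr\|_{L^p(\mu)}^p
\end{gather*}
on the event $\{\|h_{T,s}-1\|_\infty\leq \tfrac 12\}$, with the trivial bound $\Ww_p\leq\mathrm{diam}(M)$ covering the exceptional event. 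Spectral calculus yields $\nabla(-L)^{-1}(h_{T,s}-1) = \sum_{i\geq 1}\lambda_i^{-1}\eup^{-\lambda_i s}\mu_T^\alpha(\phi_i)\nabla\phi_i$.

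The analytic core is the Bernstein-type moment inequality $\Ee^\mu[|\mu_T^\alpha(f)|^k]^{1/k}\lesssim\sqrt{\Sigma(f)/T}$ for $\mu(f)=0$ and $k\geq 2$, applied to $f=\phi_i$, combined with the sharp eigenvalue estimate $\Sigma(\phi_i)\lesssim\lambda_i^{-\alpha}$. The latter follows from the representation $\Sigma(\phi_i) = \langle\phi_i,(-\Lcal^\alpha)^{-1}\phi_i\rangle_{L^2(\mu)}$: since $Z$ is $L^2(\mu)$-antisymmetric, the symmetric part of $-\Lcal^\alpha$ equals $(-L)^\alpha$, which acts as $\lambda_i^{\alpha}$ on $\phi_i$. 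Using \eqref{eq:ibp} to get orthogonality $\langle\nabla\phi_i,\nabla\phi_j\rangle_{L^2(\mu)}=\lambda_i\delta_{ij}$, the second-moment computation reduces to
\begin{gather*}
	\Ee^\mu\bigl\|\nabla(-L)^{-1}(h_{T,s}-1)\bigr\|_{L^2(\mu)}^2 \lesssim \frac 1T\sum_{i=1}^\infty\lambda_i^{-(1+\alpha)}\eup^{-2\lambda_i s},
\end{gather*}
which by \eqref{lam} is $O(1)$, $O(\log(1/s))$ or $O(s^{-(d-2-2\alpha)/2})$ in the three regimes $d<2(1+\alpha)$, $d=2(1+\alpha)$, $d>2(1+\alpha)$. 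Balancing this against $s^{p/2}$ by choosing $s=s(T)$ optimally produces exactly $\gamma_{\alpha,d}(T)^p$; the passage from $p=2$ to general $p\geq 1$ is carried out via Sogge-type $L^p$-estimates on $\nabla\phi_i$ in place of plain $L^2$-orthogonality.

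The principal obstacle I anticipate is the $i$-uniform bound $\Sigma(\phi_i)\lesssim\lambda_i^{-\alpha}$: the naive use of the spectral gap gives only $\Sigma(\phi_i)\lesssim\lambda_1^{-\alpha}$, which would erase the $d$-dependence and produce a suboptimal rate. Exploiting that $Z$ vanishes on the diagonal terms while keeping the resolvent $(-\Lcal^\alpha)^{-1}$ uniformly controlled eigenspace-by-eigenspace, and transferring this spectral control to $L^p$-moments through a sufficiently sharp Bernstein inequality for the jump process $(\Xcal_t^\alpha)$, is the delicate analytic point on which the proof hinges.
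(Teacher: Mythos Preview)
Your reduction to $\Ee^\mu[\Ww_p^p(\mu_T^\alpha,\mu)]$, the smoothing $\tilde\mu_T^s$, the displacement cost $s^{p/2}$, and the PDE bound are all exactly as in the paper (Lemma~\ref{Wp-U} and the opening of the proof of Lemma~\ref{pp}). For $p=2$ your spectral computation $\frac 1T\sum_i\lambda_i^{-(1+\alpha)}\eup^{-2\lambda_i s}$ is correct and coincides with \eqref{SUM-E}--\eqref{sHT}. However, your passage to general $p$ has two genuine gaps.

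First, a Bernstein inequality does \emph{not} yield $\Ee^\mu[|\mu_T^\alpha(f)|^k]^{1/k}\lesssim\sqrt{\Sigma(f)/T}$: it produces two scales, a Gaussian one $\sigma(f)/\sqrt T$ and an exponential one $m(f)/T$ with $m(f)\asymp\|f\|_{L^{d/(2\alpha)}(\mu)}$ (Proposition~\ref{bern}). For $p>2$ the exponential scale is not dominated by the Gaussian one, and most of the proof of Lemma~\ref{pp} is devoted to estimating the resulting term $B_\epsilon=\int_M m(\cdot)^p\,\dd\mu$ through a case analysis in $d$ versus $2\alpha$ and $4\alpha$. Your sketch drops this term entirely.

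Second, and more structurally, the eigenfunction-by-eigenfunction strategy cannot close for $p\neq 2$: the $\nabla\phi_i$ are not orthogonal in $L^p(\mu)$, and Sogge bounds on individual $\|\phi_i\|_{L^p}$ discard all cancellation and lead to a strictly worse rate. The paper avoids this by reversing the order of integration: writing $\Ee^\mu\bigl\|(-L)^{-1/2}(h_{T,s}-1)\bigr\|_{L^p(\mu)}^p=\int_M\Ee^\mu\bigl|\mu_T^\alpha(g_y)\bigr|^p\,\mu(\dd y)$ with $g_y=(-L)^{-1/2}(p_s(\cdot,y)-1)$, it applies the Bernstein tail bound of Proposition~\ref{bern} to the \emph{scalar} random variable $\mu_T^\alpha(g_y)$ for each fixed $y$ via the layer-cake formula, and only then integrates in $y$. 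No $L^p$-eigenfunction technology is needed.

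Finally, the ``principal obstacle'' you flag is not one for the upper bound. Proposition~\ref{bern} uses $\sigma^2(g)=2\|(-L)^{-\alpha/2}g\|_{L^2(\mu)}^2$, a purely spectral quantity that does not see $Z$ and requires no control of $\Sigma(\phi_i)$. The estimate $\Sigma(\phi_i)\lesssim\lambda_i^{-\alpha}$ (Lemma~\ref{V}) enters only in the \emph{lower} bound, and it is only there that the restriction $\alpha>\tfrac12$ for $Z\neq 0$ appears.
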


\begin{theorem}[Lower bound]\label{Wpq-L}
	Let $(\alpha,T,p,q)\in(1/2,1]\times (0,\infty)\times [1,\infty)\times (0,\infty)$. If $Z\neq 0$, then
	\begin{gather*}
	\inf_{\nu\in\Pscr}\Ee^\nu[\Ww_p^q(\mu_{T}^\alpha,\mu)]
	\gtrsim\gamma_{\alpha,d}(T)^q, \quad T \gg 1.
	\end{gather*}
	If $Z=0$, this estimate still holds for $\alpha\in (0,1]$.
\end{theorem}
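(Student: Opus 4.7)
\textbf{Proof plan for Theorem~\ref{Wpq-L}.}
The plan is to combine the Kantorovich--Rubinstein duality with a central limit theorem for the stationary additive functional $\mu_T^\alpha(f)$ and to choose the test function $f$ optimally in each regime. Since $p\geq 1$ gives $\Ww_p\geq\Ww_1$, the $\Ww_1$-duality yields, for every $f\in C^1(M)$ with $\mu(f)=0$,
\begin{gather*}
	\Ee^\nu\bigl[\Ww_p^q(\mu_T^\alpha,\mu)\bigr]
	\geq
	\|\nabla f\|_\infty^{-q}\,\Ee^\nu\bigl[|\mu_T^\alpha(f)|^q\bigr].
\end{gather*}
The exponential ergodicity of $(P_t^\alpha)_{t\geq 0}$ inherited from~\eqref{ULT} via subordination shows that for $T\gg 1$ the influence of a general $\nu\ne\mu$ on $\Ee^\nu[|\mu_T^\alpha(f)|^q]$ is of lower order than the target rate, so I may assume $\nu=\mu$. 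Under stationarity, \cite{Wu2} supplies the CLT $\sqrt{T}\,\mu_T^\alpha(f)\Rightarrow \Normal(0,2\Sigma(f))$; since $|\mu_T^\alpha(f)|\leq\|f\|_\infty$, all absolute moments converge, so
\begin{gather*}
	\liminf_{T\to\infty} T^{q/2}\,\Ee^\mu\bigl[|\mu_T^\alpha(f)|^q\bigr]
	=C_q\,\Sigma(f)^{q/2},
\end{gather*}
with $C_q$ the $q$-th absolute Gaussian moment. Solving $-\Lcal^\alpha u=f$ and using the $\mu$-antisymmetry of $Z$ (whence $\langle Zu,u\rangle_{L^2(\mu)}=0$) produces the clean identity $\Sigma(f)=\|(-L)^{\alpha/2}u\|_{L^2(\mu)}^2$.

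Next I would tailor the test function to each regime. For the subcritical range $d<2(1+\alpha)$, any fixed smooth $f\not\equiv 0$ with $\mu(f)=0$ and $\Sigma(f)>0$ (e.g.\ $f=\phi_1$) immediately yields $\Ee^\mu[\Ww_p^q(\mu_T^\alpha,\mu)]\gtrsim T^{-q/2}=\gamma_{\alpha,d}(T)^q$. For the supercritical range $d>2(1+\alpha)$, I would test against a spectrally localised packet $f_N=\sum_{N\leq i\leq 2N}a_i\phi_i$, with the coefficients calibrated by Weyl's law~\eqref{lam} and the standard $L^\infty$-bound for eigenfunctions so that $\|\nabla f_N\|_\infty\asymp 1$. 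A direct computation then gives
\begin{gather*}
	\Sigma(f_N)\gtrsim \sum_{N\leq i\leq 2N}\frac{a_i^2}{\lambda_i^\alpha}\asymp N^{-2\alpha/d}\sum_i a_i^2,
\end{gather*}
and substitution into the duality inequality, followed by optimisation $N\asymp T^{d/(d-2\alpha)}$, produces the rate $T^{-q/(d-2\alpha)}$. The critical case $d=2(1+\alpha)$ is handled identically, the dyadic summation over frequency shells producing the $\log T$ correction.

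The main obstacle is to justify the spectral lower bound for $\Sigma(f_N)$ in the non-symmetric case $Z\ne 0$. Writing $u=(-\Lcal^\alpha)^{-1}f_N$, we have $(-L)^\alpha u=f_N+Zu$, so applying $(-L)^{-\alpha/2}$ and the triangle inequality yields
\begin{gather*}
	\bigl\|(-L)^{-\alpha/2}f_N\bigr\|_{L^2(\mu)}
	\leq
	\bigl\|(-L)^{\alpha/2}u\bigr\|_{L^2(\mu)}+\bigl\|(-L)^{-\alpha/2}Zu\bigr\|_{L^2(\mu)}.
\end{gather*}
Hence the passage from the symmetric spectral estimate to $\Sigma(f_N)=\|(-L)^{\alpha/2}u\|_{L^2(\mu)}^2$ reduces to the $L^2(\mu)$-boundedness of the operator $(-L)^{-\alpha/2}Z(-L)^{-\alpha/2}$. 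Since $Z$ is a bounded first-order differential operator, this boundedness follows, via the integration-by-parts formula~\eqref{eq:ibp} together with a standard spectral interpolation, from the comparison $\|(-L)^{(1-\alpha)/2}u\|_{L^2(\mu)}\lesssim \|(-L)^{\alpha/2}u\|_{L^2(\mu)}$, which holds precisely when $2\alpha\geq 1$; this is the source of the restriction $\alpha>1/2$. When $Z=0$ the identity $\Sigma(f)=\|(-L)^{-\alpha/2}f\|_{L^2(\mu)}^2$ is exact and the argument goes through for every $\alpha\in(0,1]$.
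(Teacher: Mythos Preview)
Your approach succeeds in the subcritical regime $d<2(1+\alpha)$: a fixed test function such as $\phi_1$ together with the CLT and moment convergence gives the rate $T^{-q/2}$. The paper proceeds differently even there (it works through the smoothed empirical measure $\mu_{T,\epsilon}^\alpha$ and a Lusin approximation of $g_{T,\epsilon}^\alpha:=(-L)^{-1}(f_{T,\epsilon}^\alpha-1)$), but your argument is a legitimate shortcut in that case.

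In the critical and supercritical regimes, however, the proposal has a genuine gap. The Kantorovich lower bound with a \emph{deterministic} test function can never beat the CLT rate $T^{-1/2}$. Indeed, for any $f$ with $\mu(f)=0$ one has $\|\nabla f\|_\infty^2\geq\|\nabla f\|_{L^2(\mu)}^2=\|(-L)^{1/2}f\|_{L^2(\mu)}^2$, so
\[
\frac{\Sigma(f)}{\|\nabla f\|_\infty^2}
\;\leq\;
\frac{\|(-L)^{-\alpha/2}f\|_{L^2(\mu)}^2}{\|(-L)^{1/2}f\|_{L^2(\mu)}^2}
\;\leq\;\lambda_1^{-(1+\alpha)}<\infty
\]
(when $Z\neq 0$ the first inequality still holds up to a constant by the very operator bound you invoke). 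Hence $\sup_f \|\nabla f\|_\infty^{-q}(\Sigma(f)/T)^{q/2}\lesssim T^{-q/2}$, which is strictly smaller than the target $T^{-q/(d-2\alpha)}$ once $d>2(1+\alpha)$. Your optimisation is therefore internally inconsistent: if $\|\nabla f_N\|_\infty\asymp 1$ then $\Sigma(f_N)$ is uniformly bounded, and the choice $N\asymp T^{d/(d-2\alpha)}$ cannot rescue this. A secondary issue is that the CLT applies to fixed $f$, so for $f_N$ with $N=N(T)\to\infty$ you would anyway need a non-asymptotic second-moment identity such as the paper's Lemma~4.3, not the limit theorem itself.

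What makes the paper's argument work in the high-dimensional regimes is that it does \emph{not} test against a single deterministic function. Instead it tests against a Lipschitz truncation of the \emph{random} function $g_{T,\epsilon}^\alpha$, which is adapted to the realisation of $\mu_T^\alpha$. This yields
\[
\Ee^\mu[\Ww_1(\mu_{T,\epsilon}^\alpha,\mu)]
\gtrsim \beta^{-1}\Ee^\mu\bigl[\|\nabla g_{T,\epsilon}^\alpha\|_{L^2(\mu)}^2\bigr]
-\beta^{-3/2}\bigl(\Ee^\mu\|\nabla g_{T,\epsilon}^\alpha\|_{L^4(\mu)}^4\bigr)^{1/4}
\bigl(\Ee^\mu\|\nabla g_{T,\epsilon}^\alpha\|_{L^2(\mu)}^2\bigr)^{3/4},
\]
and the spectral expansion $\Ee^\mu[\|\nabla g_{T,\epsilon}^\alpha\|_{L^2(\mu)}^2]=T^{-1}\sum_i \lambda_i^{-1}\eup^{-2\lambda_i\epsilon}\,\Ee^\mu[|\psi_i^\alpha(T)|^2]$ accumulates contributions from \emph{all} eigenmodes simultaneously. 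It is precisely this summation, combined with the non-asymptotic bound $\Ee^\mu[|\psi_i^\alpha(T)|^2]\geq \lambda_i^{-\alpha}-c\lambda_i^{-2\alpha}T^{-1}$ and the choice $\epsilon\asymp T^{-2/(d-2\alpha)}$, that produces the rate $\gamma_{\alpha,d}(T)$ when $d\geq 2(1+\alpha)$. A single deterministic $f$ cannot reproduce this effect.
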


\begin{theorem}\label{Limit-R}
	For $(\alpha,d)=(1/2,3)$ and $Z=0$ one has
	\begin{gather*}
		\sup_{x\in M}\left|\frac T {\log T}\Ee^x[\Ww_2^2(\mu_T^\alpha,\mu)]-\frac{\mathrm{Vol}(M)}{2\pi^2}\right|
		\lesssim  \sqrt{\frac{\log\log T}{\log T}},\quad T\gg 1.
	\end{gather*}
\end{theorem}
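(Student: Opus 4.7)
I would adapt the PDE framework of Ambrosio--Stra--Trevisan \cite{AMB} to the subordinated setting. Since $Z=0$ the process is reversible, with $\widehat P_t^{1/2}\phi_i=\eup^{-t\sqrt{\lambda_i}}\phi_i$, and the crucial choice is to smooth the empirical measure by the \emph{non-subordinated} heat semigroup $P_\tau=\eup^{\tau L}$ rather than by $\widehat P_\tau^{1/2}$; this is what produces the precise constant $\mathrm{Vol}(M)/(2\pi^2)$ in the critical regime $d=2(1+\alpha)=3$, while the na\"ive choice $\widehat P_\tau^{1/2}$ would contribute an extra factor~$2$ in the final Weyl computation.

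Fix $t_0$ of order one and set $\tau:=1/T$. First, \eqref{ULT} applied to $\widehat P_t^{1/2}$ gives that $\widehat p_{t_0}^{1/2}(x,\cdot)$ is bounded in $L^\infty(\mu)$ uniformly in $x$, so decomposing $\mu_T^{1/2}=\tfrac{t_0}{T}\nu_0+\tfrac{T-t_0}{T}\nu_1$ (empirical on $[0,t_0]$ and $[t_0,T]$) and using the joint convexity of $\Ww_2^2$ reduces the supremum over $x$ to estimates under $\Ee^\mu$, up to an $O(1/T)$ error. For the upper bound, let $c_i:=\mu_T^{1/2}(\phi_i)$ and $\mu_T^\tau:=P_\tau(\dd\mu_T^{1/2}/\dd\mu)\cdot\mu$. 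The Benamou--Brenier/spectral estimate of \cite[Proposition~4.3]{AMB} (with potential $\Phi_\tau$ solving $-L\Phi_\tau=\dd\mu_T^\tau/\dd\mu-1$) gives
\[
\Ww_2^2(\mu_T^\tau,\mu)\le(1+o(1))\sum_{i\ge 1}\lambda_i^{-1}\eup^{-2\tau\lambda_i}c_i^2.
\]
A direct calculation using $\mu(\phi_i\widehat P_s^{1/2}\phi_i)=\eup^{-s\sqrt{\lambda_i}}$ yields
\[
\Ee^\mu[c_i^2]=\frac{2}{T\sqrt{\lambda_i}}\Bigl(1-\frac{1-\eup^{-T\sqrt{\lambda_i}}}{T\sqrt{\lambda_i}}\Bigr),
\]
and Weyl's law for $d=3$, namely $\dd N(\lambda)/\dd\lambda\sim\mathrm{Vol}(M)\sqrt\lambda/(4\pi^2)$, together with the substitution $u=\sqrt\lambda$ produces
\[
\sum_i\lambda_i^{-3/2}\eup^{-2\tau\lambda_i}=\frac{\mathrm{Vol}(M)}{4\pi^2}\log(1/\tau)+O(1).
\]
Combining, $\Ee^\mu[\Ww_2^2(\mu_T^\tau,\mu)]\le(1+o(1))\mathrm{Vol}(M)\log T/(2\pi^2 T)$ for $\tau=1/T$. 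The smoothing error is bounded by joint convexity and a Gaussian coupling on $M$, $\Ee^\mu[\Ww_2^2(\mu_T^{1/2},\mu_T^\tau)]\lesssim\tau=O(1/T)$, so the triangle inequality with parameter $\varepsilon=(\log T)^{-1/2}$ delivers the upper bound with relative error $O((\log T)^{-1/2})\le\sqrt{\log\log T/\log T}$.

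For the matching lower bound I would apply the Peyre-type inequality (\cite[Section~6]{AMB} or its Riemannian analogue) to the smoothed density $h_T^\tau:=P_\tau(\dd\mu_T^{1/2}/\dd\mu)$: when $\|h_T^\tau-1\|_\infty$ is suitably small,
\[
\Ww_2^2(\mu_T^\tau,\mu)\ge(1-o(1))\sum_i\lambda_i^{-1}\eup^{-2\tau\lambda_i}c_i^2,
\]
and the same spectral/Weyl computation yields the matching lower bound for $\Ee^\mu[\Ww_2^2(\mu_T^\tau,\mu)]$. A reverse triangle inequality together with the $O(1/T)$ smoothing estimate then transfer this to $\Ee^x[\Ww_2^2(\mu_T^{1/2},\mu)]$.

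The main obstacle is the lower-bound step: one must control $\|h_T^\tau-1\|_\infty$ (for $\tau=1/T$) by a factor not much larger than $\sqrt{\log\log T}$ on an event of probability $1-T^{-c}$, with sharp tail behaviour. This calls for the Bernstein-type inequality underpinning Theorem~\ref{Wpq}, combined with the regularising estimate \eqref{ULT} for $P_\tau$; the delicate point is to balance the smoothing scale $\tau$, the good-concentration event, and the triangle-inequality parameter $\varepsilon$ so that all error terms are absorbed into the final $\sqrt{\log\log T/\log T}$ rate.
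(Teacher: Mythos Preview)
Your spectral computations — $\Ee^\mu[c_i^2]=\frac{2}{T\sqrt{\lambda_i}}\bigl(1+O((T\sqrt{\lambda_i})^{-1})\bigr)$ and $\sum_i\lambda_i^{-3/2}\eup^{-2\tau\lambda_i}=\frac{\mathrm{Vol}(M)}{4\pi^2}\log(1/\tau)+O(1)$ — are correct and match the paper. The gap is in the smoothing scale $\tau=1/T$, and it breaks \emph{both} bounds, not only the lower one: the $(1+o(1))$ factor in your Benamou--Brenier upper bound already requires $\|h_T^\tau-1\|_\infty=o(1)$ on a high-probability event, just as the Peyre-type lower bound does.

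With $\tau=1/T$ this control is unreachable via the Bernstein inequality. For the relevant test function (e.g.\ $\nabla_y^2 q_\tau(\cdot,y)$ as in Lemma~\ref{A-T}, or directly $p_\tau(\cdot,y)-1$) one has $\sigma^2\asymp\tau^{-1}$ and $m\asymp\tau^{-1}$, so the exponent in Proposition~\ref{bern} becomes
\[
-\frac{T\xi^2}{2\sigma^2+\gamma m\xi}\;\asymp\;-\,T\tau\,\frac{\xi^2}{1+\xi}\;\asymp\;-\xi^2
\]
when $\tau=1/T$, independent of $T$; the covering step then adds polynomial prefactors in $T$. Hence $\Pp(\|h_T^\tau-1\|_\infty>\xi)\not\to0$ for any $\xi=o(1)$; a bound of order $\sqrt{\log\log T}\to\infty$ on the density oscillation is attainable but useless for the sharp constant.

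The paper's fix is to take the larger scale $\epsilon=(\log T)^\gamma/T$ with $\gamma>3$; then the Bernstein exponent is $\asymp-\xi^2(\log T)^\gamma$, and with $\xi=1/\log T$ one gets decay $\exp(-c(\log T)^{\gamma-2})$. The price is that the na\"ive coupling now gives $\Ee^\mu[\Ww_2^2(\mu_T^\alpha,\mu_{T,\epsilon}^\alpha)]\lesssim\epsilon$, which \emph{exceeds} the main term $(\log T)/T$ and cannot be absorbed by any triangle-inequality parameter. Lemma~\ref{eps} closes this with a second, spectral comparison: on the good event one has $f_{T,\epsilon}^\alpha\ge1/2$, so the refined transport estimate applies between $\mu_{T,\epsilon}^\alpha$ and $\mu_{T,\epsilon'}^\alpha$ at an intermediate scale $\epsilon'=(\log\log T)/T$, giving $\Ee^\mu[\Ww_2^2(\mu_T^\alpha,\mu_{T,\epsilon}^\alpha)]\lesssim(\log\log T)/T$. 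This two-scale argument, not the na\"ive coupling, is the source of the $\sqrt{\log\log T/\log T}$ rate. There is an inherent tension — concentration forces $T\epsilon\gg(\log T)^2$, while the na\"ive smoothing bound needs $T\epsilon\ll\log T$ — that your single-scale proposal does not resolve.
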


\begin{notation}
	Most of our notation is standard or self-explanatory. As usual, $\|\cdot\|_{p\to q}$ denotes the operator norm from $L^p(\mu)$ to $L^q(\mu)$, $1\leq p\leq q\le\infty$. The shorthand $T\gg 1$ means that $T$ is much greater than $1$. We use the shorthand $A\lesssim B$ to indicate that there exists a constant $c>0$ such that $A\leq cB$. The constant $c$ is, in general, independent of $T$, but it may (and will) depend on the underlying manifold and the parameters $\alpha$ and $d$. The notation $\Ee^\nu (\dots) = \int_{M}\Ee ^x(\dots)\,\nu(\dd x)$ stands for the expectation if the process has the initial distribution $\nu\in\Pscr$. For any $a,b\in\real$, we define $a\vee b:=\max\{a,b\}$ and $a\wedge b:=\min\{a,b\}$. $\nat$ is the set of all positive integers. We use $C_{b}^{\Lip}$ for the set of bounded, Lipschitz continuous functions.
\end{notation}

\section{A deviation inequality}
 The following deviation inequality will be central for the proof of Theorems~\ref{Wpq}--\ref{Limit-R}.
Let $\alpha\in(0,1]$ and
\begin{gather*}
	\Escr^{\alpha}(f,g)
	:= \frac 1 2 \left[ \left\langle - \Lcal^\alpha f,g\right\rangle_{L^2(\mu)} + \left\langle - \Lcal^\alpha g,f\right\rangle_{L^2(\mu)} \right],
	\quad f,g \in\Dscr(- \Lcal^\alpha)
\end{gather*}
be the symmetric Dirichlet form generated by the  (symmetric part of the)  operator $\Lcal^\alpha$. The basic idea borrows from the case when $\alpha=1$ in \cite{WWZ}, and now we extend it to more general cases. From \cite[Theorem 1]{wu2000deviation}, we know the following deviation inequality  for the (non-symmetric) process $(\Xcal_t^\alpha)_{t\geq 0}$ generated by $\Lcal^\alpha$.
\begin{lemma} \label{LDT}
	Let $\xi\in (0,\infty)$ and $g \in L^1(\mu)$ such that $\mu(g) = 0$. Define
	\begin{gather*}
		\mathcal{I}_g^\alpha(\xi-)
		:= \lim_{\epsilon \to 0}\inf \left\{ \Escr^{\alpha}(h,h) \::\:  \, h \in  \Dscr(\Escr^{\alpha}), \; \mu(h^2) = 1,  \; \mu(|g| h^2 ) < \infty,\; |\mu(g h^2)| = \xi-\epsilon \right\},
	\end{gather*}
	\textup{(}$\inf \emptyset := \infty$\textup{)}. Then,  for all probability measures $\nu = h_\nu\mu$, one has
	\begin{gather*}
		\Pp^\nu \left( \left| \frac 1 T \int_{0}^{T} g(\Xcal_t^\alpha) \,\dd t \right| > \xi \right)
		\leq 2  \left\| h_\nu \right\|_{L^2(\mu)} \exp \left[- T \mathcal{I}_g^\alpha(\xi-) \right],
		\quad T,\xi>0.
	\end{gather*}
\end{lemma}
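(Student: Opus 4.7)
The plan is to follow the classical Wu-type strategy for deviation inequalities for (possibly non-symmetric) Markov processes, which combines an exponential Chebyshev step, a Feynman--Kac bound in $L^2(\mu)$, and a variational characterisation of the resulting top eigenvalue. Although the paper cites this as Theorem~1 of \cite{wu2000deviation}, I sketch how I would reconstruct the argument.

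\textbf{Step 1 (exponential Chebyshev and Feynman--Kac).} For any $\lambda>0$, Markov's inequality gives
\[
\Pp^\nu\!\left(\frac{1}{T}\int_0^T g(\Xcal_t^\alpha)\,\dd t > \xi\right)
\le \eup^{-\lambda T\xi}\,\Ee^\nu\!\left[\eup^{\lambda\int_0^T g(\Xcal_t^\alpha)\,\dd t}\right],
\]
and the opposite tail is treated analogously. Cauchy--Schwarz in $L^2(\mu)$, together with the Feynman--Kac identification
\[
\Ee^x\!\left[\eup^{\lambda\int_0^T g(\Xcal_t^\alpha)\,\dd t}f(\Xcal_T^\alpha)\right]
= \left(\eup^{T(\Lcal^\alpha+\lambda g)}f\right)(x),
\]
yields, for $\nu=h_\nu\mu$,
\[
\Ee^\nu\!\left[\eup^{\lambda\int_0^T g(\Xcal_t^\alpha)\,\dd t}\right]
\le \|h_\nu\|_{L^2(\mu)}\,\bigl\|\eup^{T(\Lcal^\alpha+\lambda g)}\bigr\|_{L^2(\mu)\to L^2(\mu)}.
\]

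\textbf{Step 2 (reduction to the symmetric Dirichlet form).} Multiplication by $g$ is self-adjoint on $L^2(\mu)$, while $Z$ is antisymmetric because it is divergence-free with respect to $\mu$. Hence the symmetric part of $\Lcal^\alpha+\lambda g$ equals $-(-L)^\alpha+\lambda g$, whose associated quadratic form is $h\mapsto -\Escr^\alpha(h,h)+\lambda\mu(gh^2)$. Invoking the Lumer--Phillips / numerical-range bound $\|\eup^{tA}\|_{2\to 2}\le\exp(t\,\lambda_{\max}((A+A^\ast)/2))$, which follows from $\frac{\dd}{\dd t}\|\eup^{tA}f\|_2^2 = 2\,\mathrm{Re}\langle A\eup^{tA}f,\eup^{tA}f\rangle$, I obtain
\[
\bigl\|\eup^{T(\Lcal^\alpha+\lambda g)}\bigr\|_{2\to 2}
\le \exp\!\left(T\sup_{\|h\|_{L^2(\mu)}=1}\!\bigl[-\Escr^\alpha(h,h)+\lambda\mu(gh^2)\bigr]\right).
\]

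\textbf{Step 3 (Legendre transform and conclusion).} Combining Steps~1 and~2 and optimising over $\lambda>0$ produces an exponential bound of the form $\|h_\nu\|_{L^2(\mu)}\exp(-TJ(\xi))$, with
\[
J(\xi)=\sup_{\lambda>0}\inf_{\|h\|_{L^2(\mu)}=1}\!\bigl[\lambda(\xi-\mu(gh^2))+\Escr^\alpha(h,h)\bigr].
\]
A min-max argument (Sion's theorem on the convex-concave Lagrangian, applied on suitable truncations) identifies $J(\xi)$ with the infimum of $\Escr^\alpha(h,h)$ subject to $\mu(h^2)=1$ and $\mu(gh^2)$ at the level $\xi$. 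Treating the lower tail in the same way and adding the two bounds accounts for the factor $2$ in the statement, while the ``$\xi-$'' convention is absorbed by the $\liminf_{\epsilon\to 0}$ in the definition of $\mathcal{I}_g^\alpha$.

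The main obstacle will be the minimax/duality step identifying $J(\xi)$ with $\mathcal{I}_g^\alpha(\xi-)$: one must verify Sion's hypotheses on unbounded functionals, accommodate merely $g\in L^1(\mu)$ by truncation together with monotone convergence of the Feynman--Kac semigroups, and check that the side condition $\mu(|g|h^2)<\infty$ in the definition of $\mathcal{I}_g^\alpha$ is indeed the effective constraint coming from the Rayleigh quotient in Step~2. A secondary technical point is that the supremum in Step~2 is a priori over $\Dscr(\Lcal^\alpha)$ and must be extended to $\Dscr(\Escr^\alpha)$ by form-closure; this is standard once $\Escr^\alpha$ is identified as the closed form generated by the symmetric part of $\Lcal^\alpha$.
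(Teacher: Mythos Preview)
The paper does not prove this lemma; it is quoted directly from \cite[Theorem~1]{wu2000deviation} without argument. Your three-step reconstruction (exponential Chebyshev plus Feynman--Kac, the Lumer--Phillips/numerical-range bound reducing the $L^2$ operator norm to the symmetric Dirichlet form, and Legendre duality) is precisely Wu's original strategy, and the technical caveats you flag in Step~3---the duality identification of $J(\xi)$ with $\mathcal{I}_g^\alpha(\xi-)$, truncation for $g\in L^1(\mu)$, and form-closure from $\Dscr(\Lcal^\alpha)$ to $\Dscr(\Escr^\alpha)$---are exactly the points that require care in that proof, so your outline is correct and coincides with the cited source.
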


We will now give a more manageable form for the exponential appearing in Lemma~\ref{LDT}.  For any $g\in \mathscr{D}((-L)^{-\frac \alpha 2})$  with $\mu(g)=0$, we define the following two parameters:
\begin{gather}\begin{aligned}\label{sm}
	 \sigma^2(g)
	 &:=2\|(-L)^{-\frac \alpha 2} g\|_{L^2(\mu)}^2,\\[9pt]
	 m(g)
	 &:=
	 \begin{cases}
	 	\|g\|_{L^1(\mu)}, & d<2\alpha,\\
		 \inf_{1/\alpha<p<\infty} \left[\left(\frac p {\alpha p-1}\right)^2\|g\|_{L^p(\mu)}\right], & d=2\alpha,\\
	 	\|g\|_{L^\frac d {2\alpha} (\mu)},& d>2\alpha.
	 \end{cases}
\end{aligned}\end{gather}
Notice that $m(g)\in [0,\infty]$ is  (formally)  defined for all measurable functions $g$, but we will see below that only $m(g)<\infty$ leads to non-trivial results.

\begin{proposition}\label{bern}
	There exists a constant $\gamma>0$ such that for any  $g\in \mathscr{D}((-L)^{-\frac \alpha 2})$  with $\mu(g)=0$ and any $\nu=h_\nu\mu\in\Pscr$,
	\begin{gather*}
	\Pp^\nu\left(\left|\frac 1 T\int_0^T g(\Xcal_t^\alpha)\,\dd t\right|>\xi\right)
	\leq 2\|h_\nu\|_{L^2(\mu)}\exp\left[-\frac{T\xi^2}{2\sigma^2(g)+\gamma m(g)\xi}\right],\quad T,\xi>0.
	\end{gather*}
\end{proposition}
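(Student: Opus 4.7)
The plan is to derive the stated Bernstein-type inequality from Lemma~\ref{LDT} by establishing a matching lower bound on $\mathcal{I}_g^\alpha(\xi-)$. An elementary algebraic manipulation shows that the proposition follows once one proves the key pointwise estimate
\begin{gather*}
|\mu(g h^2)|
\leq \sqrt{2 \sigma^2(g) \Escr^\alpha(h,h)} + C\, m(g)\, \Escr^\alpha(h,h)
\end{gather*}
for every $h \in \Dscr(\Escr^\alpha)$ with $\mu(h^2) = 1$, where $C$ depends only on $\alpha$, $d$ and $M$. Indeed, setting $\xi = |\mu(gh^2)|$ and $E = \Escr^\alpha(h,h)$ in this estimate and squaring yields $E \geq \xi^2/(2\sigma^2(g) + 2C m(g)\xi)$, which is precisely the exponent required in Lemma~\ref{LDT} with $\gamma = 2C$.

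The derivation of the key inequality proceeds in two stages, exploiting $\mu(g) = 0$ through the identity
\begin{gather*}
\mu(g h^2) = \mu\bigl(g(h-\mu(h))^2\bigr) + 2\mu(h)\,\mu(g h).
\end{gather*}
For the cross term, decomposing $g = \sum_{i\geq 1} g_i \phi_i$ and $h = \mu(h) + \sum_{i\geq 1} h_i \phi_i$ in the eigenbasis of $-L$ and applying a weighted Cauchy--Schwarz inequality gives
\begin{gather*}
|\mu(gh)|
= \Bigl|\sum_{i\geq 1}g_i h_i\Bigr|
\leq \Bigl(\sum_{i\geq 1}\lambda_i^{-\alpha}g_i^2\Bigr)^{1/2}\Bigl(\sum_{i\geq 1}\lambda_i^\alpha h_i^2\Bigr)^{1/2}
= \tfrac{\sigma(g)}{\sqrt 2}\sqrt{\Escr^\alpha(h,h)}.
\end{gather*}
Combined with $|\mu(h)|\leq\sqrt{\mu(h^2)}=1$, the cross term is bounded by $\sqrt{2\sigma^2(g)\Escr^\alpha(h,h)}$, accounting for the first summand of the key estimate.

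For the quadratic term, set $\tilde h := h - \mu(h)$, so that $\mu(\tilde h) = 0$ and the Poincar\'e inequality yields $\|\tilde h\|_{L^2(\mu)}^2 \leq \lambda_1^{-\alpha}\Escr^\alpha(h,h)$. H\"older's inequality gives $|\mu(g\tilde h^2)| \leq \|g\|_{L^r(\mu)} \|\tilde h\|_{L^{2r'}(\mu)}^2$, and the Sobolev embedding $H^\alpha(M) \hookrightarrow L^{2r'}(M)$, combined with the Poincar\'e bound, delivers $\|\tilde h\|_{L^{2r'}(\mu)}^2 \lesssim \Escr^\alpha(h,h)$. Taking $r = 1$ in the subcritical regime $d<2\alpha$ (where $H^\alpha\hookrightarrow L^\infty$) and $r = d/(2\alpha)$ in the supercritical regime $d>2\alpha$ (where $H^\alpha\hookrightarrow L^{2d/(d-2\alpha)}$) reproduces the first and third cases of the definition of $m(g)$.

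The main obstacle is the critical case $d=2\alpha$, in which $H^\alpha \not\hookrightarrow L^\infty$ and the embedding constant of $H^\alpha\hookrightarrow L^q$ degenerates as $q\to\infty$. Here one must quantify this blow-up carefully: I expect the sharp factor $(p/(\alpha p - 1))^2$ in the definition of $m(g)$ to emerge from an $L^p$-interpolation against the heat kernel representation \eqref{HS2} together with the eigenvalue asymptotics \eqref{lam}, minimised by H\"older's inequality over the admissible range $p>1/\alpha$. Once the key inequality is established in all three regimes, inserting it into Lemma~\ref{LDT} yields the claimed Bernstein-type estimate.
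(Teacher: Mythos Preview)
Your proposal is correct and follows essentially the same route as the paper: reduce Lemma~\ref{LDT} to the pointwise bound $|\mu(gh^2)|\le \sqrt{2\sigma^2(g)\Escr^\alpha(h,h)}+C\,m(g)\,\Escr^\alpha(h,h)$, split via $\mu(gh^2)=2\mu(h)\mu(g\hat h)+\mu(g\hat h^2)$ with $\hat h=h-\mu(h)$, treat the cross term by (weighted) Cauchy--Schwarz, and the quadratic term by H\"older plus a Sobolev-type embedding for $\hat h$. The only point where you are vaguer than the paper is the critical case $d=2\alpha$: there the paper does not use the spectral heat-kernel expansion~\eqref{HS2} or the eigenvalue asymptotics~\eqref{lam}, but rather writes $\hat h=(-L)^{-\alpha/2}(-L)^{\alpha/2}\hat h=\Gamma(\alpha/2)^{-1}\int_0^\infty t^{\alpha/2-1}P_t(-L)^{\alpha/2}\hat h\,\dd t$ and applies the ultracontractivity bound~\eqref{ULT} with exponents $2\to \tfrac{2p}{p-1}$; the factor $\tfrac{p}{\alpha p-1}$ then drops out of the elementary integral $\int_0^1 t^{\alpha/2-1/(2p)-1}\,\dd t$, and squaring gives exactly the $(p/(\alpha p-1))^2$ in the definition of $m(g)$.
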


\begin{proof}
Since the bounded Lipschitz-continuous functions  $C_{b}^{\Lip}(M)$ are  dense in $\Dscr(\Escr^\alpha)$, we may assume that $h\in C_{b}^{\Lip} (M)$. In view of Lemma \ref{LDT}, it is sufficient to prove the following claim: {\itshape there exists a constant $\gamma>0$ such that for any $h\in C_{b}^{\Lip}(M)$ with $\mu(h^2)=1$,
\begin{gather}\label{I0}
	\frac{|\mu(g h^2)|^2}{2\sigma^2(g)+\gamma m(g)|\mu(gh^2)|}
	\leq \Escr^\alpha(h,h).
\end{gather}}
The inequality \eqref{I0} follows from
\begin{gather}\label{I1}
	|\mu(gh^2)|
	\leq \frac\gamma2 m(g)\Escr^\alpha(h,h) + \sqrt{2\sigma^2(g)\Escr^\alpha(h,h)},
\end{gather}
since we have
\begin{gather*}
	\left(|\mu(gh^2)| - \frac\gamma2 m(g)\Escr^\alpha(h,h)\right)^2
	\leq
	2\sigma^2(g)\Escr^\alpha(h,h) 	
	\leq
	2\sigma^2(g)\Escr^\alpha(h,h) +
	\left(\frac \gamma2 m(g)\Escr^\alpha(h,h)\right)^2.
\end{gather*}
From this it is easy to get \eqref{I0}.

We will now turn to the proof of \eqref{I1}. Set $\hat{h}:=h-\mu(h)$. Since $\mu(g)=0$, we see
\begin{gather}\label{eq-mugh}
	\mu(g h^2) = 2\mu(h)\mu(g\hat{h})+\mu(g\hat{h}^2).
\end{gather}
Since $\divi_{\mu}(Z)=0$, we have for any $h \in C_{b}^{\Lip}(M)$
\begin{gather*}
	\Escr^{\alpha}(h,h)
	= \left\langle - \Lcal^\alpha h,h\right\rangle_{L^2(\mu)}
	= \left\langle (-L)^\alpha h-Z h,h\right\rangle_{L^2(\mu)}
	= \left\langle (-L)^\alpha h,h\right\rangle_{L^2(\mu)}.
\end{gather*}
Because of $(-L)^{\alpha/2} 1=0$ we see that
\begin{gather*}
	\Escr^\alpha(\hat h,\hat h)
	= \Escr^\alpha(h,h).
\end{gather*}
These two identities will be frequently used when we estimate the two terms on the right-hand side of \eqref{eq-mugh}.

\medskip\noindent
\textbf{Step 1:} Estimate of the term $2\mu(h)\mu(g\hat{h})$ in \eqref{eq-mugh}. Due to the symmetry of $(-L)^{\alpha/2}$ in $L^2(\mu)$ and the Cauchy-Schwarz inequality, we have
\begin{align*}
	\left|\mu(g\hat{h})\right|
	= \left|\left\langle (-L)^{\alpha/2}((-L)^{-\alpha/2} g),\hat{h}\right\rangle_{L^2(\mu)}\right|
	&= \left|\left\langle (-L)^{-\alpha/2} g,(-L)^{\alpha/2}\hat{h}\right\rangle_{L^2(\mu)}\right|\\
	&\leq \sqrt{\frac{\sigma^2(g)\Escr^\alpha(h,h)}{2}}.
\end{align*}
Since $|\mu(h)|\leq \|h\|_{L^2(\mu)} = 1$, we see
\begin{gather}\label{I2}
	|2\mu(h)\mu(g\hat{h})|\le\sqrt{2\sigma^2(g)\Escr^\alpha(h,h)}.
\end{gather}

\smallskip\noindent
\textbf{Step 2:} Estimate of the term $\mu(g\hat{h}^2)$ in \eqref{eq-mugh}. We need to show that $\mu(g\hat{h}^2)\leq \frac{\gamma} 2 m(g)\Escr^\alpha(h,h)$. We do this separately for $d\in [1,2\alpha)$, $d=2\alpha$ and $d>2\alpha$.

\smallskip\noindent
\textbf{Case 1:} Let $d\in[1,2\alpha)$. Recall that
\begin{gather*}
	(-L)^{-\alpha}
	= \frac 1 {\Gamma(\alpha)} \int_0^\infty t^{\alpha-1}P_t\,\dd t,\quad \alpha>0,
\end{gather*}
where $(P_t)_{t\geq 0}$ is the semigroup generated by $L$. From  the fact that $(-L)^{\frac\alpha 2}1=0$, hence $\mu((-L)^{\frac\alpha 2}\hat h)=0$ and  \eqref{ULT} we obtain
\begin{align*}
	\|\hat{h}\|_{L^\infty(\mu)}
	= \|(-L)^{-\frac{\alpha} 2}(-L)^{\frac{\alpha}{2}}\hat{h}\|_{L^\infty(\mu)}
	&=\frac 1 {\Gamma(\frac {\alpha} 2)}\left\|\int_0^\infty t^{\frac {\alpha} 2-1}P_t(-L)^{\frac \alpha 2}\hat h \,\dd t\right\|_{L^\infty(\mu)}\\
	&=\frac 1 {\Gamma(\frac {\alpha} 2)}\left\|\int_0^\infty t^{\frac {\alpha} 2-1}\left(P_t-\mu\right)(-L)^{\frac \alpha 2}\hat h \,\dd t\right\|_{L^\infty(\mu)}\\
	&\leq c_1\int_0^\infty t^{\frac \alpha 2-1}(1\wedge t)^{-\frac d 4}\eup^{-\lambda_1 t}\,\dd t\, \|(-L)^{\frac \alpha 2}\hat{h}\|_{L^2(\mu)}\\
	&=c_2\sqrt{\Escr^\alpha(h,h)}
\end{align*}
for some constants $c_1,c_2>0$, the last equality is due to $d<2\alpha$. From this we obtain
\begin{gather*}
	|\mu(g\hat{h}^2)|\leq c_2^2\|g\|_{L^1(\mu)}\Escr^\alpha(h,h)  = c_2^2 m(g)\Escr^\alpha(h,h),
\end{gather*}
and we are done by taking $\gamma=2c_2^2$.

\medskip\noindent
\textbf{Case 2:} Let $d=2\alpha$. By the H\"{o}lder inequality, we get for any $p>1/\alpha$,
\begin{gather}\label{d2a}
	|\mu(g\hat{h}^2)|
	\leq \|g\|_{L^p(\mu)}\|\hat{h}^2\|_{L^{\frac p {p-1}}(\mu)}
	= \|g\|_{L^p(\mu)}\|\hat{h}\|_{L^{\frac {2p} {p-1}}(\mu)}^2.
\end{gather}
Using a similar argument as in the first case, we can use \eqref{ULT} to get some constants $c_3,c_4>0$ such that
\begin{align*}
	\|\hat{h}\|_{L^{\frac {2p} {p-1}}(\mu)}
	=\|(-L)^{-\frac \alpha 2}(-L)^{\frac \alpha 2}\hat{h}\|_{L^{\frac {2p} {p-1}}(\mu)}
	&=\frac 1 {\Gamma(\frac{\alpha} 2)}\left\|\int_0^\infty t^{\frac \alpha 2-1}P_t(-L)^{\frac \alpha 2}\hat{h}\,\dd t \right\|_{L^{\frac{2p}{p-1}}(\mu)}\\
	&\leq \frac 1 {\Gamma(\frac{\alpha} 2)}\int_0^\infty t^{\frac \alpha 2-1} \|P_t(-L)^{\frac \alpha 2}\hat{h}\|_{L^{\frac{2p}{p-1}}(\mu)}\,\dd t\\
	&\leq \frac 1 {\Gamma(\frac{\alpha} 2)}\int_0^\infty t^{\frac \alpha 2-1}\left\|P_t-\mu\right\|_{ 2\to \frac{2p}{p-1}} \|(-L)^{\frac \alpha 2}\hat{h}\|_{L^2(\mu)}\,\dd t\\
	&\leq c_3 \int_0^\infty t^{\frac \alpha 2-1}(1\wedge t)^{-\frac 1 {2p}}\eup^{-\lambda_1 t}\,\dd t\, \sqrt{\Escr^\alpha(h,h)}.
\end{align*}
We can bound the integral by
\begin{gather*}
	\int_0^1 t^{\frac{\alpha} 2 -\frac 1 {2p}-1}\,\dd t + \int_1^\infty t^{\frac{\alpha}2-1}\eup^{-\lambda_1 t}\,\dd t
	\leq\frac {c_4 p}  {\alpha p-1}+c_4
	\leq 2c_4 \frac p  {\alpha p-1},
\end{gather*}
where we use that  $1\leq \frac p {\alpha p-1}$. We insert this into \eqref{d2a}, take $\gamma=2 c_4^2$ and $m(g)=\inf_{1/\alpha<p<\infty}(\frac{p}{\alpha p-1})^2\|g\|_{L^p(\mu)}$, to get $\mu(g\hat h^2)\leq \frac\gamma2 m(g)\Escr^\alpha(h,h)$.

\medskip\noindent
\textbf{Case 3:} Let $d>2\alpha$. By the H\"{o}lder inequality, we have
\begin{gather}\label{dg2a}
	|\mu(g\hat{h}^2)|
	\leq \|g\|_{L^{\frac d {2\alpha}}(\mu)}\|\hat{h}\|_{L^{\frac {2d} {d-2\alpha}}(\mu)}^2.
\end{gather}
We first recall the following  Sobolev-type  inequality, which is from \cite[Chapter 3]{Hebey} or \cite[Proof Theorem 2.3 (3)]{Wang23NS}.
For any $s_2\geq s_1>-\infty$ and $p_1>p_2\geq 1$ satisfying
\begin{gather*}
	s_1 - \frac d{p_1} =s_2 - \frac d {p_2},
\end{gather*}
there exists some a constant $c_5>0$ such that
\begin{gather}\label{S}
	\|(-L)^{\frac {s_1} 2}f\|_{L^{p_1}(\mu)}
	\leq c_5\|(-L)^{\frac {s_2} 2} f\|_{L^{p_2}(\mu)},\quad \mu(f)=0.
\end{gather}
If we use
\begin{gather*}
	s_1=0,\quad s_2=\alpha,\quad p_1=\frac{2d}{d-2\alpha},\quad p_2=2,
\end{gather*}
in \eqref{S}, then we have
\begin{gather*}
	\|\hat{h}\|_{L^{\frac {2d} {d-2\alpha}}(\mu)}^2
	\leq c_6\|(-L)^{\frac \alpha 2}\hat{h}\|_{L^2(\mu)}^2
	= c_6\Escr^\alpha(h,h)
\end{gather*}
for some constant $c_6>0$. Taking $\gamma=2c_6$, we  get $\mu(g\hat h^2) \leq \frac\gamma2 \Escr^\alpha(h,h)$.

This finishes Step~2, and the proof is complete.
\end{proof}

\section{The upper bound -- proof of Theorem~\ref{Wpq}}

The following general upper bound for the Wasserstein distance will be an important ingredient for our proof. This bound is derived via the Benamou-Brenier formula, see, for example \cite{L17, AMB, W22}.
\begin{lemma}\label{Wp-U}
	Let $f\in L^p(\mu)$, $p\geq 2$, be a probability density function with respect to $\mu$. Then
	\begin{gather*}
		\Ww_p^p(f\mu,\mu)
		\leq p^p\int_M \left|\nabla(-L)^{- 1 }(f-1)\right|^p\,\dd \mu.
	\end{gather*}
	Moreover, if $p=2$, it also holds
	\begin{gather*}
		\Ww_2^2(f\mu,\mu)\leq \int_M\frac{|\nabla(-L)^{- 1 }(f-1)|^2}{\Mscr(f)}\,\dd\mu,
	\end{gather*}
	where
	$\Mscr(a):=\frac{a-1}{\log a}$ for $a>0$ and  $\Mscr(a):=0$ if $a=0$ or $a=1$.
\end{lemma}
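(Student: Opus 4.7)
The plan is to invoke the Benamou--Brenier representation of $\Ww_p$ applied to the linear interpolation between $\mu$ and $f\mu$. Set $u := (-L)^{-1}(f-1)$, which is well defined by the assumption $\mu(f)=1$ and the integration-by-parts formula \eqref{eq:ibp} yields $\mathrm{div}_\mu(\nabla u) = Lu = -(f-1)$. Define
\begin{gather*}
    h_t := 1 + t(f-1), \qquad v_t := \frac{\nabla u}{h_t}, \qquad t\in[0,1],
\end{gather*}
so that $h_0\mu = \mu$, $h_1\mu = f\mu$, and the continuity equation
\begin{gather*}
    \partial_t h_t + \mathrm{div}_\mu(h_t v_t) = (f-1) + Lu = 0
\end{gather*}
holds. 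By the Benamou--Brenier formula, $\Ww_p(f\mu,\mu) \leq \int_0^1 \|v_t\|_{L^p(h_t\mu)}\,\dd t$.

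For general $p\geq 2$, I would compute $\|v_t\|_{L^p(h_t\mu)}^p = \int |\nabla u|^p h_t^{-(p-1)}\,\dd\mu$, and then apply Minkowski's integral inequality to exchange the $L^p(\mu)$-norm with the integral in $t$:
\begin{gather*}
    \Ww_p(f\mu,\mu) \leq \left(\int_M |\nabla u|^p \Big(\int_0^1 h_t^{-(p-1)/p}\,\dd t\Big)^{p}\,\dd\mu\right)^{1/p}.
\end{gather*}
An explicit antiderivative gives $\int_0^1 h_t^{-(p-1)/p}\,\dd t = p(f^{1/p}-1)/(f-1)$. The elementary inequality $f^{1/p}-1 \leq f-1$ for $f\geq 1$ and $f^{1/p}-1 \geq f-1$ for $0\leq f\leq 1$ (both following from the monotonicity of $s\mapsto s^{1/p}-s$ on $[0,1]$ and concavity of $s\mapsto s^{1/p}$) yields $(f^{1/p}-1)/(f-1)\leq 1$, hence the bracket is $\leq p$, producing the $p^p$ factor.

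For $p=2$, I would instead use the symmetric Benamou--Brenier identity $\Ww_2^2(f\mu,\mu)\leq \int_0^1 \int |v_t|^2\,\dd\mu_t\,\dd t$ directly, then apply Fubini:
\begin{gather*}
    \Ww_2^2(f\mu,\mu) \leq \int_M |\nabla u|^2 \int_0^1 \frac{\dd t}{1+t(f-1)}\,\dd\mu = \int_M \frac{|\nabla u|^2}{\Mscr(f)}\,\dd\mu,
\end{gather*}
since $\int_0^1 (1+t(f-1))^{-1}\,\dd t = \log f/(f-1) = 1/\Mscr(f)$, with the correct interpretation at $f=1$.

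The main technical obstacle is justifying the Benamou--Brenier bound when $f$ (and therefore $h_t$) may vanish on a set of positive measure, so that $v_t = \nabla u/h_t$ is not a priori well defined or in $L^p(\mu_t)$. I would handle this by the standard approximation $f_\varepsilon := (f+\varepsilon)/(1+\varepsilon)$, which is bounded below by $\varepsilon/(1+\varepsilon)$, apply the bounds to $f_\varepsilon\mu$, and pass to the limit $\varepsilon\downarrow 0$: the right-hand sides converge by dominated convergence (using that $(f_\varepsilon^{1/p}-1)/(f_\varepsilon-1)\leq 1$ uniformly in $\varepsilon$, and $1/\Mscr(f_\varepsilon)\to 1/\Mscr(f)$ monotonically from below), while the left-hand side converges by the lower semicontinuity (or continuity) of $\Ww_p$ under weak convergence on the compact manifold $M$. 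The required regularity of $u = (-L)^{-1}(f-1)$ that makes $|\nabla u|\in L^p(\mu)$ can be assumed by initial smoothing of $f$, since both sides are continuous in $f$ in the relevant topology.
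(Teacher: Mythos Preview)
The paper does not prove this lemma; it only cites \cite{L17,AMB,W22}. Your argument for the second inequality ($p=2$) via the linear interpolation $h_t=1+t(f-1)$ together with the Benamou--Brenier action bound and Fubini is correct and is the standard proof of that estimate.

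For the first inequality with general $p\ge 2$, however, your Minkowski step is applied in the wrong direction. Setting $G(x,t):=|\nabla u(x)|\,h_t(x)^{-(p-1)/p}$, the length form of Benamou--Brenier gives
\[
\Ww_p(f\mu,\mu)\le \int_0^1 \|G(\cdot,t)\|_{L^p(\mu)}\,\dd t,
\]
and you then claim this is at most $\bigl\|\int_0^1 G(\cdot,t)\,\dd t\bigr\|_{L^p(\mu)}$. Minkowski's integral inequality reads $\bigl\|\int G\,\dd t\bigr\|_{L^p}\le \int\|G\|_{L^p}\,\dd t$, i.e.\ exactly the reverse of what you need; your display is therefore false in general. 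Nor can the linear interpolation be rescued by passing to the action form: from $\Ww_p^p\le \int_0^1\int_M |\nabla u|^p h_t^{1-p}\,\dd\mu\,\dd t$ and Fubini one obtains the pointwise weight
\[
\int_0^1 h_t^{1-p}\,\dd t=\frac{1-f^{2-p}}{(p-2)(f-1)}\qquad (p>2),
\]
which diverges as $f\to 0^+$ and hence cannot be dominated by $p^p$. (Even for $p=2$ this weight is $1/\Mscr(f)$, also unbounded near $f=0$, so the first inequality of the lemma is genuinely not a corollary of the second.) The proofs in the cited references obtain the $p^p$ constant through the heat-semigroup interpolation $s\mapsto (P_s f)\mu$ on $[0,\infty)$, with momentum $-\nabla P_s f$, rather than the linear path; see \cite{L17,W22} for the details.
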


We will now show the key lemma for the proof of Theorem~\ref{Wpq}.
\begin{lemma}\label{pp}
	Assume that $\alpha\in(0,1]$. For any  $p\geq 1$ one has
	\begin{gather*}
		\Ee^\mu\left[\Ww_p^p(\mu_{T}^\alpha,\mu)\right]
		\lesssim \gamma_{\alpha,d}(T)^p,\quad T\gg 1.
	\end{gather*}
\end{lemma}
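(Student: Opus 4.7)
The plan is to regularize $\mu_T^\alpha$ with the Riemannian heat semigroup $(P_s)_{s\geq 0}$ generated by $L$, apply the Benamou--Brenier bound of Lemma~\ref{Wp-U}, and control the spectral integral by the Bernstein inequality of Proposition~\ref{bern} in the eigenbasis $(\phi_i)_{i\geq 1}$. For $s>0$ to be chosen, set
\begin{gather*}
f_{T,s}(x):=\frac 1 T\int_0^T p_s(\Xcal_t^\alpha,x)\,\dd t,
\end{gather*}
so that $f_{T,s}\mu=\mu_T^\alpha P_s$ is a smoothed empirical density. The triangle inequality splits $\Ww_p(\mu_T^\alpha,\mu)\leq \Ww_p(\mu_T^\alpha,f_{T,s}\mu)+\Ww_p(f_{T,s}\mu,\mu)$, and the first term is controlled pathwise by the natural heat-flow coupling, giving $\Ww_p^p(\mu_T^\alpha,f_{T,s}\mu)\leq \sup_{x\in M}\int \rho(x,y)^p p_s(x,y)\,\dd\mu(y)\lesssim s^{p/2}$ for small $s$.

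\textbf{The case $p=2$.} Expanding $p_s-1$ by \eqref{HS2} yields the spectral representation
\begin{gather*}
\nabla(-L)^{-1}(f_{T,s}-1)(x)=\sum_{i\geq 1}\frac{\eup^{-\lambda_i s}}{\lambda_i}\,\mu_T^\alpha(\phi_i)\,\nabla\phi_i(x),
\end{gather*}
and the orthogonality $\int \nabla\phi_i\cdot\nabla\phi_j\,\dd\mu=\lambda_i\delta_{ij}$, which follows from \eqref{eq:ibp}, collapses the $L^2$-integral in Lemma~\ref{Wp-U} to a single sum. Applying Proposition~\ref{bern} to $g=\phi_i$, for which $\sigma^2(\phi_i)=2\lambda_i^{-\alpha}$, and integrating the sub-Gaussian tail yield $\Ee^\mu[\mu_T^\alpha(\phi_i)^2]\lesssim 1/(T\lambda_i^\alpha)$. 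Using Weyl's bound \eqref{lam} to convert to powers of $i$ and splitting the sum at $i\simeq s^{-d/2}$ produces
\begin{gather*}
\Ee^\mu[\Ww_2^2(f_{T,s}\mu,\mu)]\lesssim \frac 1 T\sum_{i\geq 1}\frac{\eup^{-2\lambda_i s}}{\lambda_i^{1+\alpha}}\lesssim \frac{\varepsilon_{\alpha,d}(s)}{T},
\end{gather*}
where $\varepsilon_{\alpha,d}(s)$ is $O(1)$, $O(\log(1/s))$, or $O(s^{-(d-2-2\alpha)/2})$ according to whether $d<2(1+\alpha)$, $d=2(1+\alpha)$, or $d>2(1+\alpha)$. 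Optimizing $s\mapsto s+\varepsilon_{\alpha,d}(s)/T$ delivers $\Ee^\mu[\Ww_2^2(\mu_T^\alpha,\mu)]\lesssim \gamma_{\alpha,d}(T)^2$. For $p\in[1,2]$ the statement then follows at once from $\Ww_p^p\leq (\Ww_2^2)^{p/2}$ combined with Jensen's inequality applied to the concave map $x\mapsto x^{p/2}$.

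\textbf{General $p$ and main obstacle.} The case $p>2$ is the principal obstacle, since Lemma~\ref{Wp-U} now forces control of an $L^p$ norm rather than an $L^2$ sum. The plan is to write, for each coordinate direction $k$, $\partial_{x_k}(-L)^{-1}(f_{T,s}-1)(x)=\mu_T^\alpha(g_{x,s}^{(k)})$ with
\begin{gather*}
g_{x,s}^{(k)}(y):=\sum_{i\geq 1}\frac{\eup^{-\lambda_i s}}{\lambda_i}\,\partial_{x_k}\phi_i(x)\,\phi_i(y),
\end{gather*}
apply Proposition~\ref{bern} pointwise in $x$ to bound $\Ee^\mu[|\mu_T^\alpha(g_{x,s}^{(k)})|^p]$ by a Gaussian/exponential tail integral, and use Fubini to integrate against $\mu(\dd x)$ before taking $p$-th roots. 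The delicate task is to control $\sigma^2(g_{x,s}^{(k)})$ and $m(g_{x,s}^{(k)})$ from \eqref{sm} uniformly in $x$: the first quantity is handled by the same spectral trichotomy as above, while the second requires the Sobolev-type bound \eqref{S} combined with the ultracontractive estimate \eqref{ULT}. It is precisely in this tail-moment analysis that the threshold $d=2(1+\alpha)$ is encoded in the definition of $m$, and after balancing the resulting bound against the transport error $s^{p/2}$ one recovers the rate $\gamma_{\alpha,d}(T)^p$ for every $p\geq 1$.
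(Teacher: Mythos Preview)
Your strategy coincides with the paper's: regularize by $P_\epsilon$, split off the cost $\epsilon^{p/2}$ via the triangle inequality, apply Lemma~\ref{Wp-U}, and feed the layer-cake formula into Proposition~\ref{bern} pointwise in the spatial variable, so that the Gaussian and exponential tail regimes produce a $T^{-p/2}A_\epsilon+T^{-p}B_\epsilon$ structure whose case analysis yields the trichotomy in $\gamma_{\alpha,d}$. One streamlining the paper uses: rather than working componentwise with your $g_{x,s}^{(k)}$ (which would require local frames and gradient heat-kernel bounds), it first invokes $L^p$-boundedness of the Riesz transform $\nabla(-L)^{-1/2}$ to replace $\|\nabla(-L)^{-1}(f_{T,\epsilon}^\alpha-1)\|_{L^p}$ by the scalar $\|(-L)^{-1/2}(f_{T,\epsilon}^\alpha-1)\|_{L^p}$ and applies Bernstein to $g_y(\cdot)=(-L)^{-1/2}(p_\epsilon(\cdot,y)-1)$; then $\sigma^2(g_y)$ and $m(g_y)$ reduce directly to on-diagonal heat-kernel asymptotics via \eqref{Heat}--\eqref{L2}. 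The paper also treats all $p\geq 1$ uniformly rather than singling out $p=2$.

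One caution on your $p=2$ shortcut: ``integrating the sub-Gaussian tail'' of Proposition~\ref{bern} is not a complete justification, since the full tail bound yields $\Ee^\mu[\mu_T^\alpha(\phi_i)^2]\lesssim \sigma^2(\phi_i)/T+m(\phi_i)^2/T^2$, and for $d>4\alpha$ the correction $T^{-2}\sum_i \eup^{-2\lambda_i s}\lambda_i^{-1}m(\phi_i)^2$ turns out to be of the \emph{same} order as the main term at the optimal $s$ --- this is precisely the $B_\epsilon$ contribution, which the paper has to track carefully. The inequality $\Ee^\mu[\mu_T^\alpha(\phi_i)^2]\leq 2/(T\lambda_i^\alpha)$ is nevertheless true, but its clean proof is the direct Markov-property computation (as in Lemma~\ref{PHI}, using $\Sigma(\phi_i)\leq\lambda_i^{-\alpha}$ from Lemma~\ref{V} and $\Sigma(Z\phi_i)\geq 0$), not the Bernstein tail.
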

\begin{proof}
We define the modified  empirical  measure as $\mu_{T,\epsilon}^\alpha(g) :=\mu_{T}^{\alpha} (P_\epsilon g)$.  By construction, $\mu_{T}^{\alpha} P_\epsilon$ is absolutely continuous with respect to $\mu$, and its Radon-Nikodym derivative is given by
\begin{gather}\label{eq:ftalpha}
	f_{T,\epsilon}^\alpha(y)
	:= \frac 1 T \int_0^T p_\epsilon( \Xcal_t^\alpha,y)\, \dd t,\quad y\in M.
\end{gather}
From the proof of \cite[Lemma 5.1(2)]{Wang23NS} we know that for any $\epsilon>0$,
\begin{gather*}
	\Ee^\mu\left[\rho(X_0,X_{\epsilon})^p\right]
	\lesssim \epsilon^{\frac p 2}.
\end{gather*}
This, together with the basic coupling $\mu_T^\alpha\otimes\mu_{T,\epsilon}^\alpha$ in  $\Cscr(\mu_T^\alpha,\mu_{T,\epsilon}^\alpha)$, gives
\begin{gather*}
	\Ee^\mu \left[\Ww_p^p(\mu_T^\alpha,\mu_{T,\epsilon}^\alpha)\right]
	\leq \frac 1 T\int_0^T\Ee^\mu\left[\rho(\Xcal_t^\alpha,\Xcal_{t+\epsilon}^\alpha)^p\right] \dd t
	= \frac 1 T\int_0^T\Ee^\mu\left[\rho(X_0,X_{\epsilon})^p\right] \dd t
	\lesssim \epsilon^{\frac p 2},
\end{gather*}
where the equality is due to the fact that the processes $(\Xcal_t^\alpha)_{t\geq 0}$ and $(X_t)_{t\geq 0}$ share the same invariant measure. Thus, by the triangle inequality of $\Ww_p$, we have
\begin{gather}\label{W-Tri}
	\Ee^\mu \left[\Ww_p^p(\mu_T^\alpha,\mu)\right]
	\lesssim \Ee^\mu \left[\Ww_p^p(\mu_T^\alpha,\mu_{T,\epsilon}^\alpha)\right]
	+ \Ee^\mu\left[\Ww_p^p(\mu_{T,\epsilon}^\alpha,\mu)\right]
	\lesssim \epsilon^{\frac p 2} + \Ee^\mu\left[\Ww_p^p(\mu_{T,\epsilon}^\alpha,\mu)\right].
\end{gather}
The second term on the right in the above inequality is also called the \emph{modified term}.

\medskip\noindent
\textbf{Estimate of the modified Term:} We apply Lemma \ref{Wp-U} with  $f=f_{T,\epsilon}^\alpha$ and use the  layer-cake formula to get
\begin{align*}
	\Ee^\mu\left[\Ww_p^p(\mu_{T,\epsilon}^\alpha,\mu)\right]
	&\lesssim\Ee^\mu \left[\|(-L)^{-\frac 1 2}(f_{T,\epsilon}^\alpha-1)\|_{L^p(\mu)}^p\right]\\
	&= p\int_M \int_0^\infty \Pp^\mu\left[\left|\frac 1 T \int_0^T(-L)^{-\frac 1 2}_{ y}\left(p_\epsilon( \Xcal_t^\alpha, y)-1\right) \dd t\right|>\xi\right]\xi^{p-1}\,\dd\xi\,\mu(\dd y).
\end{align*}
We use the shorthand $g := g(y) := p_\epsilon(X_t^\alpha,y)-1$. From Proposition~\ref{bern} we see
\begin{align*}
	&\Ee^\mu\left[\Ww_p^p(\mu_{T,\epsilon}^\alpha,\mu)\right]
	\lesssim \int_{M}\int_0^\infty \exp\left(-\frac{T\xi^2}{2\sigma^2(g)+\gamma m(g)\xi}\right)\xi^{p-1}\,\dd\xi\,\dd \mu\\
	&\quad\lesssim\int_M\int_{0}^{\frac{2\sigma^2(g)}{\gamma m(g)}}\exp\left(-c_1\frac{T\xi^2}{\sigma^2(g)}\right)\xi^{p-1}\,\dd\xi\,\dd\mu
	+\int_{M}\int_{\frac{2\sigma^2(g)}{\gamma m(g)}}^\infty\exp\left(-c_1\frac{T\xi}{m(g)}\right)\xi^{p-1}\,\dd \xi\,\dd\mu\\
	&\quad\lesssim \int_M\int_0^\infty \exp(-c_2 u)u^{\frac p 2-1}\sigma(g)^p T^{-\frac p 2}\,\dd u\,\dd\mu
	+\int_M\int_0^\infty \exp(-c_2 u) u^{p-1} m(g)^p T^{-p}\,\dd u\,\dd\mu.
\end{align*}
This means that we have
\begin{gather*}
	\Ee^\mu\left[\Ww_p^p(\mu_{T,\epsilon}^\alpha,\mu)\right]
	\lesssim T^{-\frac p 2}A_\epsilon + T^{-p}B_\epsilon,\quad T>0,
\end{gather*}
where
\begin{align*}
	A_\epsilon &:= \int_M\sigma\left((-L)^{-\frac 1 2}(p_\epsilon(\cdot,y)-1)\right)^p\,\mu(\dd y),\\
	B_\epsilon &:= \int_M m\left((-L)^{-\frac 1 2}(p_\epsilon(\cdot,y)-1)\right)^p\,\mu(\dd y).
\end{align*}
We are going to bound these terms separately.

\medskip\noindent
\textbf{Estimate of $A_\epsilon$:}  Using the spectral representation \eqref{HS2} of the heat kernel $p_\epsilon(x,y)$ along with $(-L)^{-\frac{1+\alpha}2}\phi_i = \lambda_i^{-\frac{1+\alpha}{2}}\phi_i$ and $\langle \phi_i,\phi_j\rangle_{L^2(\mu)}=\delta_{ij}$, yields
\begin{align*}
\sigma\left((-L)^{-\frac 1 2}(p_\epsilon(\cdot,y)-1)\right)^2
&= 2\left\|(-L)^{-\frac12 (1+\alpha)}(p_\epsilon(\cdot,y)-1)\right\|_{L^2(\mu)}^2\\
&= 2\int_M\left|(- L)^{-\frac{1+\alpha} 2}\sum_{i=1}^\infty\eup^{-\lambda_i\epsilon}\phi_i(x)\phi_i(y)\right|^2\,\mu(\dd x)\\
&= 2\sum_{i=1}^\infty\frac{\eup^{-2\lambda_i\epsilon}}{\lambda_i^{1+\alpha}}\phi_i^2(y).
\end{align*}
Since
\begin{gather*}
	\lambda_i^{-(1+\alpha)}
	=\frac 1 {\Gamma(1+\alpha)}\int_0^\infty \eup^{-\lambda_i s}s^\alpha \,\dd s,\quad \alpha>0,\;i\geq 1,
\end{gather*}
and
\begin{gather*}
	\sum_{i=1}^\infty \eup^{-\lambda_i \epsilon}\phi_i(y)^2
	= p_\epsilon(y,y)-1,\quad \epsilon>0,\;y\in M,
\end{gather*}
we get for any $\epsilon\in(0,1)$ and $y\in M$,
\begin{gather}\label{SUM-E}\begin{aligned}
	\sum_{i=1}^\infty\frac{\eup^{-2\lambda_i\epsilon}}{\lambda_i^{1+\alpha}}\phi_i^2(y)
	&= \frac 1 {\Gamma(1+\alpha)}\sum_{i=1}^\infty\int_0^\infty \eup^{-\lambda_i(s+2\epsilon)}s^{\alpha}\phi_i^2(y)\,\dd s\\
	&= \frac 1 {\Gamma(1+\alpha)}\int_0^\infty s^{\alpha}(p_{s+2\epsilon}(y,y)-1)\,\dd s.
\end{aligned}\end{gather}
By \eqref{ULT}, we have
\begin{gather}\label{Heat}
	\sup_{y\in M}|p_{s+2\epsilon}(y,y)-1|
	\leq \|P_{s+2\epsilon}-\mu\|_{ 1\to\infty}
	\lesssim (s+2\epsilon)^{-\frac d 2}\eup^{-\lambda_1 s},\quad \epsilon,\;s>0.
\end{gather}
Inserting this into \eqref{SUM-E}, we arrive at
\begin{gather}\label{sHT}\begin{aligned}
	\sup_{y\in M}\sum_{i=1}^\infty\frac{\eup^{-2\lambda_i\epsilon}}{\lambda_i^{1+\alpha}}\phi_i^2(y)
	&\lesssim\int_0^\infty s^{\alpha}(s+2\epsilon)^{-\frac d 2}\eup^{-\lambda_1 s}\,\dd s\\
	&\lesssim 1+\int_0^1 s^\alpha(s+2\epsilon)^{-\frac d 2}\,\dd s\\
	&\lesssim
	\begin{cases}
	1, 	&d<2(1+\alpha),\\
	\log(1+\epsilon^{-1}), & d=2(1+\alpha),\\
	\epsilon^{-\frac 12[d-2(1+\alpha)]}, & d>2(1+\alpha).
	\end{cases}
\end{aligned}\end{gather}
From the definition of $A_\epsilon$ we see for any $\epsilon>0$ that
\begin{gather}\label{A}\begin{aligned}
	A_\epsilon
	\lesssim
	\begin{cases}
	1, &d<2(1+\alpha),\\
	\log^{\frac p 2}(1+\epsilon^{-1}), & d=2(1+\alpha),\\
	\epsilon^{-\frac14 [d-2(1+\alpha)]}, & d>2(1+\alpha).
	\end{cases}
\end{aligned}\end{gather}

\smallskip\noindent
\textbf{Estimate of $B_\epsilon$:} From \eqref{Heat} it follows that
\begin{gather}\label{infty}\begin{aligned}
	\|(-L)^{-\frac 1 2}(p_\epsilon(\cdot,y)-1)\|_\infty
	&\leq \frac 1 {\sqrt{\pi}}\int_0^\infty s^{-\frac 1 2}\sup_{x,y\in M}|p_{s+\epsilon}(x,y)-1|\,\dd s\\
	&\lesssim \int_0^\infty s^{-\frac 1 2}(s+2\epsilon)^{-\frac d 2}\eup^{-\lambda_1 s}\,\dd s\\
	&\lesssim
	\begin{cases}
	\log \epsilon^{-1}, &d=1,\\
	\epsilon^{-\frac{d-1}2}, &d\geq 2.
	\end{cases}
\end{aligned}\end{gather}
From $\int_{2\epsilon}^\infty e^{-\lambda_i t}\,\dd t = \lambda_i^{-1} \eup^{-2\lambda_i\epsilon}$ and Fubini's theorem we get
\begin{gather}\label{L2}\begin{aligned}
	\|(-L)^{-\frac 1 2}(p_\epsilon(\cdot,y)-1)\|_{L^2(\mu)}^2
	&=\sum_{i=1}^\infty\frac{\eup^{-2\lambda_i \epsilon}}{\lambda_i}\phi_i^2(y)\\
	&=\int_{2\epsilon}^\infty (p_t(y,y)-1) \,\dd t\\
	&\le\int_{2\epsilon}^\infty |p_t(y,y)-1| \,\dd t
	\lesssim
	\begin{cases}
	1, &d=1,\\
	\log(\epsilon^{-1}), &d=2,\\
	\epsilon^{-\frac {d-2} 2}, &d\ge3.
	\end{cases}
\end{aligned}\end{gather}

From the definition \eqref{sm} of $m(g)$ we see that $m(g)$ changes its behaviour if $d=2\alpha$. In order to bound $m(g)$ we will distinguish cases according to the dimension $d$ in relation to $\alpha$.

\medskip\noindent
\textbf{Case 1:} Assume that $d\leq 4\alpha$. In this case we can control $m(g)$ by $\|g\|_{L^2(\mu)}$,  and this expression can be relatively easily estimated. \normal

\smallskip
\emph{Case 1.1}: Assume that $\alpha=\frac 1 2$ and $d=1$. Using \eqref{S}, we have for any $ p > 2$
\begin{align*}
	m((-L)^{-\frac 1 2}\left(p_\epsilon(\cdot,y)-1)\right)
	&\lesssim \|(-L)^{-\frac 1 2}(p_\epsilon(\cdot,y)-1)\|_{L^p(\mu)}\\
	&\lesssim \|(-L)^{-\frac{p+2}{4p}}(p_\epsilon(\cdot,y)-1)\|_{L^2(\mu)}.
\end{align*}
From the fact that $\lambda_i^{-u}=\Gamma(u)^{-1}\int_0^\infty s^{u-1}\eup^{-\lambda_i s}\,\dd s$ with $u=\frac{2+p}{2p}$, and \eqref{Heat} we get
\begin{align*}
	m((-L)^{-\frac 1 2}\left(p_\epsilon(\cdot,y)-1)\right)^2
	&\lesssim\sum_{i=1}^\infty \eup^{-2\lambda_i \epsilon}\lambda_i^{-\frac{p+2}{2p}}\phi_i^2(y)\\
	&\lesssim \int_0^\infty \sum_{i=1}^\infty \eup^{-\lambda_i(2\epsilon+s)}\phi_i^2(y)s^{\frac 1 p-\frac 1 2}\,\dd s\\
	&=\int_0^\infty (p_{2\epsilon+s}(y,y)-1)s^{\frac 1 p-\frac 1 2}\,\dd s\\
	&\lesssim \int_0^\infty (2\epsilon+s)^{-\frac 1 2} s^{\frac 1 p-\frac 1 2} \eup^{-\lambda_1 s}\,\dd s
	<\infty.
\end{align*}
Thus, $B_\epsilon\lesssim 1$ for any $\epsilon>0$. Setting $\epsilon=T^{-1}$ in \eqref{W-Tri} shows
\begin{gather}\label{R0}
	\Ee^\mu\left[\Ww_p^p(\mu_{T}^\alpha,\mu)\right]
	\lesssim \epsilon^{\frac p 2} + T^{-\frac p 2}A_\epsilon+T^{-p}B_\epsilon
	\lesssim T^{-\frac p 2} + T^{-p}\lesssim T^{-\frac p 2},\quad T\gg 1.
\end{gather}

\smallskip
\emph{Case 1.2}: Assume that $d\leq 4\alpha$ and $(\alpha,d)\neq(\frac 1 2,1)$.  As mentioned earlier, we have  $m(g)\leq\|g\|_{L^2(\mu)}$ for any $g\in L^2(\mu)$ with $\mu(g)=0$. Combining this with \eqref{L2}, we see that
\begin{gather}\begin{aligned}\label{B<}
	B_\epsilon
	&=\int_M m\left((-L)^{-\frac 1 2}(p_\epsilon(\cdot,y)-1)\right)^p\,\mu(\dd y)\\
	&\lesssim\|(-L)^{-\frac 1 2}(p_\epsilon(\cdot,y)-1)\|_{L^2(\mu)}^p
	\lesssim
	\begin{cases}
	1, &d=1,\\
	\log^{\frac p 2}(\epsilon^{-1}), &d=2,\\
	\epsilon^{-\frac p 4}, &d=3,\\
	\epsilon^{-\frac p 2}, &d=4.
	\end{cases}
\end{aligned}\end{gather}
If we choose $\epsilon=T^{-1}$, then we easily see that $T^{-p}B_\epsilon\lesssim T^{-\frac p 2}$ for any $d$ satisfying $d\leq 4\alpha \leq 4$. As $d\in\nat$ and $d<4\alpha$ with $\alpha\in(0,1]$, the $d=4\alpha$ is the same as $(\alpha,d)=(1,4)$. In view of \eqref{A}, for large enough $T >1$, we also have
\begin{gather*}
	T^{-\frac p 2}A_\epsilon\lesssim
	\begin{cases}
	T^{-\frac p 2}, &d<4\alpha,\\
	T^{-\frac p 2}\log^{\frac p 2} T,&d=4.
	\end{cases}
\end{gather*}
Inserting this into \eqref{W-Tri}, we obtain
\begin{gather}\label{R1}
	\Ee^\mu\left[\Ww_p^p(\mu_{T}^\alpha,\mu)\right]
	\lesssim \epsilon^{\frac p 2} + T^{-\frac p 2}A_\epsilon + T^{-p}B_\epsilon
	\lesssim
	\begin{dcases}
		T^{-\frac p 2}, & d<4\alpha\\
		T^{-\frac p 2}\log^{\frac p 2} T, & d=4
	\end{dcases},
	\quad  T\gg 1.
\end{gather}

\medskip\noindent
\textbf{Case 2:} Assume that $d> 4\alpha$ and $T\gg 1$. Interpolating between \eqref{infty} and \eqref{L2} shows
\begin{align*}
	m\left((-L)^{-\frac 1 2}(p_\epsilon(\cdot,y)-1)\right)^p
	&= \|(-L)^{-\frac 1 2}(p_\epsilon(\cdot,y)-1)\|_{L^{\frac d {2\alpha}}(\mu)}^p\\
	&\leq \left[\|(-L)^{-\frac 1 2}(p_\epsilon(\cdot,y)-1)\|_\infty^{\frac d {2\alpha}-2}
	\|(-L)^{-\frac 1 2}(p_\epsilon(\cdot,y)-1)\|_{L^2(\mu)}^2\right]^{\frac{2\alpha p}{d}}\\
	&
	\lesssim
	\begin{cases}
	\log^{p(1-4\alpha)}(\epsilon^{-1}), &d=1,\\
	\epsilon^{\alpha p-\frac p 2}\log^{\alpha p}(\epsilon^{-1}), &d=2,\\
	\epsilon^{-\frac 12 p[d-(1+2\alpha)]}, &d\geq 3.
	\end{cases}
\end{align*}
Now we choose $\epsilon$ in the following way:
\begin{gather*}
	\epsilon
	=
	\begin{cases}
	T^{-1}, &d\le2(1+\alpha),\\
	T^{-\frac 2 {d-2\alpha}}, &d>2(1+\alpha).
	\end{cases}
\end{gather*}
There are now three cases
\begin{itemize}
\item If $d\leq 2$, then
\begin{gather}
	T^{-p}B_\epsilon
	\lesssim
	\begin{cases}
		T^{-p}\log^{(1-4\alpha)p}T, &d=1,\\
		T^{-\frac p 2-\alpha p}\log^{\alpha p}T, &d=2,
	\end{cases}
	\quad\text{hence,}\quad
	T^{-p}B_\epsilon\lesssim T^{-\frac p 2}.
\end{gather}

\item If $3\leq d<2(1+\alpha)$, then
\begin{gather*}
	T^{-p}B_\epsilon
	\lesssim T^{\frac 12 p[d-(1+2\alpha)]-p}
	\lesssim T^{\frac 12 p[2(1+\alpha)-(1+2\alpha)]-p}
	\lesssim T^{-\frac p 2}.
\end{gather*}

\item If $3\leq d$  and $d=2(1+\alpha)$,  then we have either $(\alpha,d)=(\frac 1 2,3)$ or $(\alpha,d)=(1,4)$. We have already considered the case $(\alpha,d)=(1,4)$ further up.  A direct calculation reveals that $T^{-p}B_\epsilon\lesssim T^{-\frac p 2}$ if  $(\alpha,d)=(\frac 1 2,3)$.

\item If $3\leq d$ and $d>2(1+\alpha)$, then
\begin{gather*}
	T^{-p}B_\epsilon
	\lesssim T^{\frac{(d-(1+2\alpha))p}{d-2\alpha}-p}
	= T^{-\frac p {d-2\alpha}}.
\end{gather*}
\end{itemize}
We can combine these cases to see that under the assumption $d>4\alpha$,
\begin{gather*}
	T^{-p}B_\epsilon
	\lesssim
	\begin{cases}
	T^{-\frac p 2}, &d<2(1+\alpha),\\
	T^{-\frac p 2}\log^{\frac p 2} T, &d=2(1+\alpha),\\
	T^{-\frac p {d-2\alpha}}, &d>2(1+\alpha).
	\end{cases}
\end{gather*}
Because of our choice of $\epsilon$, one can also easily check that $T^{-\frac p 2}A_\epsilon$ enjoys the same upper bounds as $T^{-p}B_\epsilon$. Thus, by \eqref{W-Tri}, we have
\begin{gather}\begin{aligned}\label{R3}
	\Ee^\mu\left[\Ww_p^p(\mu_{T}^\alpha,\mu)\right]
	&\lesssim \epsilon^{\frac p 2}+T^{-\frac p 2}A_\epsilon+T^{-p}B_\epsilon\\
	&\lesssim
	\begin{cases}
	T^{-\frac p 2}, &d<2(1+\alpha),\\
	T^{-\frac p 2}\log^{\frac p 2}T, &d=2(1+\alpha),\\
	T^{-\frac p {d-2\alpha}}, &d>2(1+\alpha).
	\end{cases}
\end{aligned}\end{gather}
This completes Case 2.

Combining  \eqref{R1} and \eqref{R3}, finally finishes the proof.
\end{proof}

\begin{proof}[Proof of Theorem \ref{Wpq}]
Let $q>0$ and $p\geq 1$. Since the Wasserstein distance is monotone, the H\"{o}lder inequality shows
\begin{align*}
	\Ee^{\nu}\left[\Ww_p^q(\mu_T^\alpha,\mu)\right]
	&=\int_{M} h(x) \Ee^{x}\left[\Ww_p^q(\mu_T^\alpha,\mu)\right]\mu(\dd x)\\
	&\leq \int_{M} h(x) \Ee^{x}\left[\Ww_{p\vee q}^q(\mu_T^\alpha,\mu)\right]\mu(\dd x)\\
	&\leq \int_M h(x)\left(\Ee^x\left[\Ww_{p\vee q \vee 1}^{p\vee q}(\mu_T^\alpha,\mu)\right]\right)^{\frac q {p\vee q}}\,\mu(\dd x)\\
	&\leq \|h\|_{ L^{q^*}(\mu)} \Ee^\mu\left[\Ww_{p\vee q}^{p\vee q}(\mu_T^\alpha,\mu)\right]^\frac{q}{{p\vee q}}\\
	&\lesssim \|h\|_{ L^{q^*}(\mu)} \gamma_{\alpha,d}(T)^q,
\end{align*}
where $q^*$ is the conjugate of the exponent $(p\vee q)/q$; the last inequality is from Lemma \ref{pp}.
\end{proof}

\section{The lower bound -- proof of Theorem~\ref{Wpq-L}}

We split the proof of Theorem~\ref{Wpq-L} in a series of lemmas.

\begin{lemma}\label{Gradient}
\begin{enumerate}
\item\label{Gradient-a}
Let $Z\neq 0$ and $\alpha\in(\frac 12,1]$ or $Z=0$ and $\alpha\in(0,1]$. Then \normal
\begin{gather}\label{Poin}
	\|P_t^\alpha-\mu\|_{L^2(\mu)}
	\leq \eup^{-\lambda_1^\alpha t},\quad t>0.
\end{gather}
\item\label{Gradient-b}
If $Z\neq 0$, then there exists for any $\alpha\in(1/2,1]$ a constant $c>0$ such that
\begin{gather}\label{G-A}
	\|\nabla P_t^\alpha f\|_{2}
	\leq c(1\wedge t)^{-\frac 1 {2\alpha}}\eup^{-\lambda_1^\alpha t}\|f\|_{L^2(\mu)},
	\quad t>0,\;f\in C_{b,L}(M).
\end{gather}
\end{enumerate}
\end{lemma}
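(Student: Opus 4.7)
The plan rests on two observations: $-(-L)^\alpha$ is the symmetric part of $\Lcal^\alpha$ in $L^2(\mu)$, and the drift $Z$ is antisymmetric there since $\divi_\mu(Z)=0$ implies $\langle Zu,u\rangle_{L^2(\mu)}=\tfrac12\mu(Z(u^2))=0$. For (a), I would first reduce to the case $\mu(f)=0$, using that $P_t^\alpha 1=(P_t^\alpha)^* 1=1$ gives $\mu(P_t^\alpha f)=\mu(f)$. Setting $u_t:=P_t^\alpha f$ and differentiating, the antisymmetry of $Z$ together with the spectral-gap bound $\langle (-L)^\alpha g,g\rangle_{L^2(\mu)}\geq \lambda_1^\alpha\|g\|_{L^2(\mu)}^2$ (valid for $\mu(g)=0$) yield
\[
	\frac{\dd}{\dd t}\|u_t\|_{L^2(\mu)}^2
	= -2\langle (-L)^\alpha u_t,u_t\rangle_{L^2(\mu)}
	\leq -2\lambda_1^\alpha \|u_t\|_{L^2(\mu)}^2,
\]
from which \eqref{Poin} follows by Grönwall's inequality. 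No restriction on $\alpha$ is needed at this step.

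For (b), I would first derive the analogous estimate for the symmetric subordinate semigroup $\widehat{P}_t^\alpha$. For $\mu(f)=0$, the spectral expansion of $\widehat{P}_t^\alpha$ combined with the integration-by-parts formula \eqref{eq:ibp} gives
\[
	\|\nabla \widehat{P}_t^\alpha f\|_{L^2(\mu)}^2
	= \sum_{i\geq 1}\lambda_i\eup^{-2\lambda_i^\alpha t}\bigl|\langle f,\phi_i\rangle_{L^2(\mu)}\bigr|^2
	\leq \Bigl(\sup_{\lambda\geq\lambda_1}\lambda\eup^{-2\lambda^\alpha t}\Bigr)\|f\|_{L^2(\mu)}^2,
\]
and an elementary calculus argument (the interior maximum sits at $\lambda=(\alpha t)^{-1/\alpha}$) bounds this supremum by $c(1\wedge t)^{-1/\alpha}\eup^{-\lambda_1^\alpha t}$. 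To sharpen the exponential rate in the resulting gradient estimate after taking square roots, I would factorise $\widehat{P}_t^\alpha=\widehat{P}_1^\alpha\widehat{P}_{t-1}^\alpha$ for $t\geq 1$ and invoke part (a) on the second factor; this yields the analogue of \eqref{G-A} for $\widehat{P}_t^\alpha$, valid for every $\alpha\in(0,1]$.

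To transfer this to $P_t^\alpha$ I use Duhamel's formula
\[
	P_t^\alpha f = \widehat{P}_t^\alpha f + \int_0^t \widehat{P}_{t-s}^\alpha(ZP_s^\alpha f)\,\dd s,
\]
apply $\nabla$, the estimate just obtained for $\widehat{P}$, and the elementary bound $\|ZP_s^\alpha f\|_{L^2(\mu)}\leq\|Z\|_\infty\|\nabla P_s^\alpha f\|_{L^2(\mu)}$. For $t\in(0,1]$ this produces a Volterra inequality for $\|\nabla P_t^\alpha f\|_{L^2(\mu)}$ with kernel $(t-s)^{-1/(2\alpha)}$. The main obstacle lies precisely here: the kernel is locally integrable \emph{if and only if} $1/(2\alpha)<1$, i.e.\ $\alpha>1/2$, which is exactly the hypothesis of the lemma. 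Under this assumption, iterating the inequality (or applying a generalised Grönwall lemma) gives $\|\nabla P_t^\alpha f\|_{L^2(\mu)}\lesssim t^{-1/(2\alpha)}\|f\|_{L^2(\mu)}$ for $t\in(0,1]$. For $t>1$, writing $P_t^\alpha=P_1^\alpha P_{t-1}^\alpha$, applying the short-time bound at $t=1$ to the first factor, and combining with part (a) applied to the mean-zero component of $P_{t-1}^\alpha f$ produces the full estimate \eqref{G-A}.
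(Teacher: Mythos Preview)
Your proposal is correct and follows essentially the same route as the paper. For part~(a) the paper cites the equivalence between the Poincar\'e inequality for $\Escr^\alpha$ and exponential $L^2$-decay, while you derive the same differential inequality directly and apply Gr\"onwall; for part~(b) both arguments use the spectral estimate for $\widehat{P}_t^\alpha$, Duhamel's formula, a generalised Gr\"onwall inequality on $(0,1]$ (where the integrability of the kernel $(t-s)^{-1/(2\alpha)}$ forces $\alpha>1/2$), and the semigroup split $P_t^\alpha=P_1^\alpha P_{t-1}^\alpha$ together with part~(a) for $t>1$.
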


\begin{proof}
\ref{Gradient-a}
If $\alpha=1$, the claim follows from \eqref{ULT}. Since $\lambda_1>0$ is the spectral gap of $L$, \normal it is clear that $\lambda_1^\alpha>0$ is the spectral gap of $(-L)^\alpha$. By \cite[Theorem 1.1.1]{W05}, this is equivalent to the following Poincar\'e inequality  for $f\in\Dscr(\Escr^\alpha)$ satisfying $\mu(f)=0$:
\begin{gather*}
	\mu(f^2)
	\leq \frac 1 {\lambda_1^\alpha} \left\langle (-L)^\alpha f,f\right\rangle_{L^2(\mu)}
	= \frac 1 {\lambda_1^\alpha}\left\langle (-\Lcal)^\alpha f,f\right\rangle_{L^2(\mu)}
	= \frac 1 {\lambda_1^\alpha}\Escr^\alpha(f,f).
\end{gather*}
Using once again \cite[Theorem 1.1.1]{W05}, we have
\begin{gather}\label{Pin}
	\|P_t^\alpha-\mu\|_{L^2(\mu)}\leq \eup^{-\lambda_1^\alpha t}.
\end{gather}

\medskip\noindent
\ref{Gradient-b} Assume that $Z\neq 0$. An elementary calculation shows that for any $\alpha\in(1/2,1]$ we have $x\eup^{-x^\alpha}\leq c_1$ for some constant $c_1=c_1(\alpha)$ and all $x\in\real$. \normal Using the spectral representation of $f\in L^2(\mu)$,  we have for the semigroup $(\hat P_t^\alpha)_{t\geq 0}$ with generator $-(-L)^\alpha$ \normal
\begin{gather*}
	\|\nabla \widehat{P}_t^\alpha f\|_{L^2(\mu)}^2
	= \sum_{i=1}^\infty \lambda_i\eup^{-2\lambda_i^\alpha t}\mu(f\phi_i)^2
	\leq c_1 t^{-\frac 1 {\alpha}}\eup^{-\lambda_1^\alpha t}\|f\|_{L^2(\mu)}^2,
	\quad t>0.
\end{gather*}
With Duhamel's formula we see
\begin{align*}
	\|\nabla P_t^\alpha f\|_{L^2(\mu)}
	&\leq \|\nabla \widehat{P}_t^\alpha f\|_{L^2(\mu)}+\int_0^t\|\nabla \widehat{P}_s^\alpha\{Z P_{t-s}^\alpha f\}\|_{L^2(\mu)} \,\dd s\\
	&\leq c_1 t^{-\frac 1 {2\alpha}}\|f\|_{L^2(\mu)}+c_1\|Z\|_\infty \int_0^t s^{-\frac 1 {2\alpha}} \|\nabla P_{t-s}^\alpha f\|_{L^2(\mu)} \,\dd s,
	\quad t>0.
\end{align*}
We can now use the generalized Gronwall inequality from \cite{YGD}, to get
\begin{gather}\label{t<1}
	\|\nabla P_t^\alpha f\|_{L^2(\mu)}
	\leq c_2 t^{-\frac 1 {2\alpha}}\|f\|_{L^2(\mu)},\quad t\leq 1,
\end{gather}
for some constant $c_2>0$. In order to deal with $t\geq 1$, we use the semigroup property, \eqref{t<1} and \eqref{Poin}: there is some constant  $c_3>0$ such that we have for $t\geq 1$
\begin{gather}\label{t>1}\begin{aligned}
	\|\nabla P_t^\alpha f\|_{L^2(\mu)}
	=\|\nabla P_t^\alpha (f-\mu(f))\|_{L^2(\mu)}
	&=\|\nabla P_1^\alpha P_{t-1}^\alpha (f-\mu(f))\|_{L^2(\mu)}\\
	&\leq c_2\|P_{t-1}^\alpha (f-\mu(f))\|_{L^2(\mu)}\\
	&\leq c_3\eup^{-\lambda_1^\alpha t}\|f\|_{L^2(\mu)}.
\end{aligned}\end{gather}
The claim follows from \eqref{t<1} and \eqref{t>1}.
\end{proof}

Recall from \eqref{Sigma} that $\Sigma(f) = \int_0^\infty \mu(fP_t^\alpha f)\,\dd t$.
\begin{lemma}\label{V}
	Let $\alpha\in (\frac 12,1]$ if $Z\neq 0$ and $\alpha\in (0,1]$ if $Z= 0$. For any $i\geq 1$, we have
\begin{gather*}
	\Sigma(\phi_i)=\lambda_i^{-\alpha}-\lambda_i^{-2\alpha}\Sigma(Z\phi_i).
\end{gather*}
\end{lemma}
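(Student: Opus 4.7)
The plan is to compute $\Sigma(\phi_i)$ by exploiting that $\phi_i$ is an eigenfunction of the symmetric part $-(-L)^\alpha$ of $\Lcal^\alpha$, and treating the anti-symmetric drift $Z$ as a perturbation via Duhamel's (variation of constants) formula. Writing $\Lcal^\alpha = -(-L)^\alpha + Z$ and integrating the ODE for $P_t^\alpha$, we get the two Duhamel identities
\begin{gather*}
	P_t^\alpha = \widehat P_t^\alpha + \int_0^t \widehat P_{t-s}^\alpha Z P_s^\alpha\,\dd s
	= \widehat P_t^\alpha + \int_0^t P_{t-s}^\alpha Z \widehat P_s^\alpha\,\dd s.
\end{gather*}
Applied to $\phi_i$ and using $\widehat P_t^\alpha \phi_i = \eup^{-\lambda_i^\alpha t}\phi_i$ together with the self-adjointness of $\widehat P_t^\alpha$ in $L^2(\mu)$, the first identity yields
\begin{gather*}
	\mu(\phi_i P_t^\alpha \phi_i)
	= \eup^{-\lambda_i^\alpha t} + \int_0^t \eup^{-\lambda_i^\alpha(t-s)}\mu(\phi_i Z P_s^\alpha \phi_i)\,\dd s.
\end{gather*}

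Next I would invoke the divergence-free property $\mu(Zf)=0$, which via the Leibniz rule gives $\mu(gZh)=-\mu(hZg)$, so $\mu(\phi_i Z P_s^\alpha \phi_i)=-\mu((Z\phi_i) P_s^\alpha \phi_i)$. To turn the right-hand side into something involving $Z\phi_i$ on \emph{both} sides of $P_s^\alpha$, I would apply the second Duhamel identity to $P_s^\alpha \phi_i$:
\begin{gather*}
	P_s^\alpha \phi_i = \eup^{-\lambda_i^\alpha s}\phi_i + \int_0^s \eup^{-\lambda_i^\alpha r} P_{s-r}^\alpha (Z\phi_i)\,\dd r.
\end{gather*}
Testing against $Z\phi_i$, the boundary term drops out because $\mu((Z\phi_i)\phi_i)=\tfrac12\mu(Z\phi_i^2)=0$ by the divergence-free property again. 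Hence
\begin{gather*}
	\mu((Z\phi_i)P_s^\alpha\phi_i) = \int_0^s \eup^{-\lambda_i^\alpha r}\,\mu((Z\phi_i)P_{s-r}^\alpha (Z\phi_i))\,\dd r.
\end{gather*}

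The final step is to integrate $\mu(\phi_i P_t^\alpha \phi_i)$ over $t\in[0,\infty)$ and swap the order of integration. The first Duhamel contribution gives $\int_0^\infty \eup^{-\lambda_i^\alpha t}\,\dd t = \lambda_i^{-\alpha}$. The remaining three-fold integral, after the substitutions $v=t-s$ and $u=s-r$, factors cleanly as
\begin{gather*}
	-\int_0^\infty \eup^{-\lambda_i^\alpha v}\,\dd v \int_0^\infty \eup^{-\lambda_i^\alpha r}\,\dd r \int_0^\infty \mu((Z\phi_i)P_u^\alpha(Z\phi_i))\,\dd u
	= -\lambda_i^{-2\alpha}\Sigma(Z\phi_i),
\end{gather*}
which is exactly the asserted identity.

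The only non-routine point is justifying Fubini for the iterated integrals. This is where Lemma~\ref{Gradient}(\ref{Gradient-a}) enters: since $\mu(\phi_i)=0$ and $\mu(Z\phi_i)=0$, the contraction $\|P_t^\alpha - \mu\|_{L^2(\mu)}\le \eup^{-\lambda_1^\alpha t}$ yields $\|P_s^\alpha \phi_i\|_{L^2(\mu)}\le \eup^{-\lambda_1^\alpha s}$ and $\|P_u^\alpha(Z\phi_i)\|_{L^2(\mu)}\le \eup^{-\lambda_1^\alpha u}\|Z\phi_i\|_{L^2(\mu)}$. Combined with Cauchy--Schwarz and the boundedness of $Z$ and $\nabla\phi_i$ on the compact manifold $M$, every integrand is uniformly exponentially integrable, so Fubini applies and all changes of order are legitimate.
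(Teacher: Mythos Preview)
Your proof is correct. Both your argument and the paper's hinge on the eigenfunction identity $\widehat P_t^\alpha\phi_i=\eup^{-\lambda_i^\alpha t}\phi_i$, the divergence-free relations $\mu((Z\phi_i)\phi_i)=0$ and $\mu(\phi_i Zf)=-\mu((Z\phi_i)f)$, and the exponential decay from Lemma~\ref{Gradient}\ref{Gradient-a}; but the routes differ. You iterate Duhamel's formula twice---once with $\widehat P^\alpha$ on the left, once with $\widehat P^\alpha$ on the right---to obtain an explicit triple convolution, then factor it via the change of variables $(t,s,r)\mapsto(v,u,r)=(t-s,s-r,r)$. The paper instead integrates the Kolmogorov identity $P_t^\alpha\phi_i-\phi_i=\int_0^t P_s^\alpha\Lcal^\alpha\phi_i\,\dd s$ against a test function $g$, lets $t\to\infty$ using \eqref{Poin}, and then plays the same game for the \emph{adjoint} semigroup $P_t^{\alpha,*}$ (generated by $-(-L)^\alpha-Z$) to avoid a second Duhamel expansion; the duality $\mu(\phi_i P_s^{\alpha,*}\phi_i)=\mu(\phi_i P_s^\alpha\phi_i)$ and $\mu(\phi_i P_s^{\alpha,*}Z\phi_i)=\mu((Z\phi_i)P_s^\alpha\phi_i)$ then close the loop. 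Your approach is more hands-on and self-contained (no adjoint needed), while the paper's is slightly more structural and avoids the triple integral; both are of comparable length and difficulty.
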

\begin{proof}
	Since $\Lcal^\alpha$ is the generator of the semigroup $(P_t^\alpha)_{t\geq 0}$ and $\phi_i\in\Dscr(\Lcal^\alpha)\cap\Dscr((-L)^\alpha)$, we have
	\begin{gather*}
		P_t^\alpha\phi_i - \phi_i
		= \int_0^t P_s^\alpha \Lcal^\alpha\phi_i\,\dd s
		= -\int_0^t P_s^\alpha (-L)^\alpha\phi_i\,\dd s + \int_0^t P_s^\alpha Z\phi_i\,\dd s.
	\end{gather*}
	If we multiply this equality with $g\in L^2(\mu)$ and integrate w.r.t.\ $\mu$, we get
	\begin{gather}\label{eq-new}
		\mu(g P_t^\alpha\phi_i) - \mu(g\phi_i)
		= -\lambda_i^\alpha \int_0^t \mu(gP_s^\alpha\phi_i)\,\dd s + \int_0^t \mu(gP_s^\alpha Z\phi_i)\,\dd s.
	\end{gather}
	Observe that because of \eqref{Poin}
	\begin{gather*}
		\mu(gP_t^\alpha\phi_i)
		\leq \|g\|_{L^2(\mu)}\|P_t^\alpha\phi_i\|_{L^2(\mu)}
		\leq \|g\|_{L^2(\mu)}\eup^{-\lambda_1^\alpha t}
		\xrightarrow{t\to\infty} 0.
	\end{gather*}
	Letting $t\to\infty$ turns \eqref{eq-new} into
	\begin{gather}\label{eq:new-2}
		\lambda_i^\alpha \int_0^\infty \mu(gP_s^\alpha\phi_i)\,\dd s
		=\mu(g\phi_i) + \int_0^\infty \mu(gP_s^\alpha Z\phi_i)\,\dd s.
	\end{gather}
	Now we take $g=Z\phi_i$ and observe that $Z$ is divergence-free:
	\begin{gather*}
	\lambda_i^\alpha \int_0^\infty \mu(Z\phi_i P_s^\alpha\phi_i)\,\dd s
		= \int_0^\infty \mu(Z\phi_i P_s^\alpha Z\phi_i)\,\dd s.
	\end{gather*}
	The equality \eqref{eq:new-2} remains true for the adjoint semigroup $P_t^{\alpha,*}$ (which is generated by $-(-L)^\alpha-Z$), if we replace $Z$ by $-Z$. Taking $g=\phi_i$ in the adjoint version of \eqref{eq:new-2} and observing that $\mu(\phi_i^2)=1$, we get
	\begin{align*}
		\lambda_i^\alpha \int_0^\infty \mu(\phi_iP_s^\alpha \phi_i)\,\dd s
		= \lambda_i^\alpha \int_0^\infty \mu(\phi_iP_s^{\alpha,*}\phi_i)\,\dd s
		&= 1 - \int_0^\infty \mu(\phi_i P_s^{\alpha,*} Z\phi_i)\,\dd s\\
		&= 1 - \int_0^\infty \mu(Z\phi_i P_s^{\alpha} \phi_i)\,\dd s.
	\end{align*}
	If we combine the last two formulae, the claim follows.
\end{proof}

For the next lemma we need the following functional of the Markov process $(\Xcal_t^\alpha)_{t\geq 0}$:
\begin{gather}\label{eq:psialpha}
	\psi_i^\alpha(T) := \frac 1 {\sqrt{T}}\int_0^T \phi_i(\Xcal_t^\alpha)\,\dd t, \quad i\in\nat.
\end{gather}
\begin{lemma}\label{PHI}
\begin{enumerate}
\item\label{PHI-a}
	Assume that $Z=0$ and $\alpha\in(0,1]$. There exists some constant $c>0$ such that
	\begin{gather}\label{phiL-0}
		\left|\Ee^\mu\left[|\psi_i^\alpha(T)|^2\right] -\frac 2 {\lambda_i^\alpha}\right|
		\leq \frac{c}{\lambda_i^{2\alpha}T},\quad i\geq 1,\;  T \gg 1.
	\end{gather}

\item\label{PHI-b}
	Assume that $Z\neq 0$ and $\alpha\in(1/2,1]$. There exist constants $c,c'>0$ such that
	\begin{gather}\label{phiL}
		\left|\Ee^\mu\left[|\psi_i^\alpha(T)|^2\right]-\frac 2 {\lambda_i^\alpha}+\frac 2{\lambda_i^{2\alpha}}\Sigma(Z\phi_i)\right|
		\leq \frac c{\lambda_i^{2\alpha} T},\quad i\geq 1,\;T\gg 1,
	\end{gather}
	and
	\begin{gather*}
		\Sigma(Z\phi_i)\leq c' \lambda_i^{\frac 1 2}.
	\end{gather*}
\end{enumerate}
\end{lemma}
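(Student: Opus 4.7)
Fubini and stationarity of $(\Xcal_t^\alpha)_{t\geq 0}$ under $\mu$ give
\begin{gather*}
	\Ee^\mu[|\psi_i^\alpha(T)|^2]
	= 2\int_0^T a(u)\,\dd u - \frac{2}{T}\int_0^T u\,a(u)\,\dd u,
	\qquad a(u):=\mu(\phi_i P_u^\alpha \phi_i).
\end{gather*}
In part \ref{PHI-a} (i.e.\ $Z=0$), $P_t^\alpha=\widehat{P}_t^\alpha$ is symmetric and $\phi_i$ is an eigenfunction, so $a(u)=\eup^{-\lambda_i^\alpha u}$ and a one-line computation produces $\Ee^\mu[|\psi_i^\alpha(T)|^2]=2/\lambda_i^\alpha-2(1-\eup^{-\lambda_i^\alpha T})/(T\lambda_i^{2\alpha})$, which already settles \eqref{phiL-0}.

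For the bound $\Sigma(Z\phi_i)\lesssim\sqrt{\lambda_i}$ in part \ref{PHI-b}, integrate by parts using $\divi_\mu Z=0$ to rewrite $\mu(Z\phi_i\, P_t^\alpha Z\phi_i)=-\mu(\phi_i\,\langle Z,\nabla P_t^\alpha Z\phi_i\rangle)$; Cauchy--Schwarz, the gradient estimate \eqref{G-A} of Lemma~\ref{Gradient}\ref{Gradient-b}, and $\|Z\phi_i\|_{L^2(\mu)}\leq\|Z\|_\infty\sqrt{\lambda_i}$ (from $\|\nabla\phi_i\|_{L^2(\mu)}^2=\lambda_i$) give $|\mu(Z\phi_i\, P_t^\alpha Z\phi_i)|\lesssim\sqrt{\lambda_i}(1\wedge t)^{-1/(2\alpha)}\eup^{-\lambda_1^\alpha t}$. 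For $\alpha>1/2$ we have $1/(2\alpha)<1$, the integral over $t$ converges, and the claimed bound follows; this is precisely why $\alpha>1/2$ is essential when $Z\neq 0$.

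For the main estimate \eqref{phiL}, combine $\Lcal^\alpha=-(-L)^\alpha+Z$, the eigenvalue equation and self-adjointness of $(-L)^\alpha$ to show that $a$ satisfies the ODE $a'(u)+\lambda_i^\alpha a(u)=b(u)$ with $b(u):=-\mu(Z\phi_i\cdot P_u^\alpha\phi_i)$, so that $a(u)=\eup^{-\lambda_i^\alpha u}+\int_0^u\eup^{-\lambda_i^\alpha(u-s)}b(s)\,\dd s$. Following the proof of Lemma~\ref{V} but with integration truncated at $T$, the identity
\begin{gather*}
	\int_0^T \mu(g P_s^\alpha \phi_i)\,\dd s
	= \lambda_i^{-\alpha}\bigl[\mu(g\phi_i)-\mu(g P_T^\alpha \phi_i)\bigr]
	+ \lambda_i^{-\alpha}\int_0^T \mu(g P_s^\alpha Z\phi_i)\,\dd s
\end{gather*}
is applied first with $g=\phi_i$ and then, via the adjoint semigroup (whose generator carries $-Z$), with $g=Z\phi_i$. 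Expanding both $\int_0^T a\,\dd u$ and $T^{-1}\int_0^T u\,a\,\dd u$ in this way yields the main term $2\Sigma(\phi_i)=2/\lambda_i^\alpha-2\Sigma(Z\phi_i)/\lambda_i^{2\alpha}$ supplied by Lemma~\ref{V}, a boundary correction of size $\lambda_i^{-2\alpha}T^{-1}$, and residuals of the form $\mu(\phi_i P_T^\alpha\phi_i)$ and $\int_T^\infty\mu(Z\phi_i\, P_s^\alpha Z\phi_i)\,\dd s$, which the Poincar\'e bound \eqref{Poin} of Lemma~\ref{Gradient}\ref{Gradient-a} renders exponentially small in $T$.

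\emph{Main obstacle.} The subtle point is the iteration: a naive Cauchy--Schwarz applied after a single use of the identity only produces an error of order $\lambda_i^{-\alpha}T^{-1}$. The sharper rate $\lambda_i^{-2\alpha}T^{-1}$ required in \eqref{phiL} forces one to track the algebraic cancellation between the $\int_0^T a\,\dd u$ and $T^{-1}\int_0^T u\,a\,\dd u$ pieces after the \emph{second} application of the identity, and to absorb the $\sqrt{\lambda_i}$ factor carried by $\Sigma(Z\phi_i)$ into the prefactor $\lambda_i^{-2\alpha}$; the assumption $\alpha>1/2$ is exactly what guarantees $\lambda_i^{1/2-2\alpha}\leq\lambda_i^{-\alpha}$ for $\lambda_i\geq 1$, so that the mixed terms can be absorbed into the main $\lambda_i^{-2\alpha}T^{-1}$ bound.
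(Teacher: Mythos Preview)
Your treatment of part~\ref{PHI-a} and of the bound $\Sigma(Z\phi_i)\lesssim\sqrt{\lambda_i}$ in part~\ref{PHI-b} matches the paper's proof essentially line by line.

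For the main estimate~\eqref{phiL}, however, the paper's route is considerably shorter than the iteration you propose. Starting from the same stationarity identity (written as $\frac{2}{T}\int_0^T\!\int_0^{T-t}a(s)\,\dd s\,\dd t$, which is equivalent to your representation), the paper simply splits $\int_0^{T-t}=\int_0^\infty-\int_{T-t}^\infty$: the first piece is exactly $2\Sigma(\phi_i)$, and a \emph{single} application of Lemma~\ref{V} converts it into the main term $2\lambda_i^{-\alpha}-2\lambda_i^{-2\alpha}\Sigma(Z\phi_i)$. The whole error is then the tail
\[
\frac{2}{T}\int_0^T\!\int_{T-t}^\infty \mu(\phi_i P_s^\alpha\phi_i)\,\dd s\,\dd t,
\]
which the paper bounds in one stroke via Cauchy--Schwarz and the decay $\|P_s^\alpha\phi_i\|_{L^2(\mu)}\le\eup^{-\lambda_i^\alpha s}$, giving $2\lambda_i^{-2\alpha}T^{-1}$ after two elementary integrations. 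No ODE for $a$, no second pass through the truncated identity with $g=Z\phi_i$, and no tracking of cancellations between $\int_0^T a$ and $T^{-1}\int_0^T u\,a$ are needed. The obstacle you flag---that a single naive bound gives only $\lambda_i^{-\alpha}T^{-1}$---dissolves once one uses the eigenvalue-specific exponential rate $\lambda_i^\alpha$ rather than the bottom rate $\lambda_1^\alpha$ when controlling $a(s)$; two integrations in $s$ and $t$ then automatically produce the factor $\lambda_i^{-2\alpha}$. Your iteration scheme may ultimately be pushed through, but the paper's direct tail estimate is both shorter and more transparent.
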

\begin{proof}
\ref{PHI-a}
	Let $Z=0$ and $\alpha\in (0,1]$. From the Markov property and the fact  that $P_t^\alpha$ is invariant with respect to $\mu$ we get
\begin{align*}
	\Ee^\mu\left[|\psi_i(T)|^2\right]
	&= \Ee^\mu\left[\frac 2T\int_0^T \dd t\int_{t}^T  \phi_i(\Xcal_s^\alpha)\,\dd s \: \phi_i(\Xcal_t^\alpha)\right]\\
	&= \Ee^\mu\left[\frac 2T\int_0^T\dd t\int_{t}^T \Ee^{\Xcal_t^\alpha} \phi_i(\Xcal_{s-t}^\alpha)\,\dd s \: \phi_i(\Xcal_t^\alpha)\right]\\
	&= \frac 2 T\int_0^T\dd t\int_{t}^T\mu(\phi_i P_{s-t}^\alpha\phi_i)\,\dd s.
\end{align*}
Since $\mu(\phi_i^2)=1$ and $P_t^\alpha \phi_i=\eup^{-\lambda_i^\alpha t}\phi_i$ this becomes
 \begin{align*}
 	\Ee^\mu\left[|\psi_i(T)|^2\right]
	=\frac 2 T\int_0^T \,\dd t\int_{t}^T \eup^{-\lambda_i^{\alpha}(s-t)}\,\dd s
	&=\frac 2 T \int_0^T\frac 1 {\lambda_i^\alpha}\left(1-\eup^{-\lambda_i^\alpha(T-t)}\right)\dd t\\
	&=\frac 2 {\lambda_i^\alpha}-\frac 2 {T\lambda_i^{2\alpha}}\left(1-\eup^{-\lambda_i^\alpha T}\right).
\end{align*}
Therefore, there exists some constant $c_1>0$ such that for any  $T \ge 1$,
\begin{gather*}
	\left|\Ee^\mu[|\psi_i(T)|^2]-\frac 2 {\lambda_i^\alpha}\right|
	\leq \frac{c_1}{\lambda_i^{2\alpha}T}.
\end{gather*}

\medskip\noindent
\ref{PHI-b} Let $Z\neq 0$ and $\alpha\in(\frac 12,1]$. Similar to the first calculation in Part~\ref{PHI-a} we get
\begin{align*}
	\Ee^\mu\left[|\psi_i^\alpha(T)|^2\right]
	&= \frac 2 T \int_0^T\,\dd t\int_{t}^T\mu(\phi_i P_{s-t}^\alpha\phi_i)\,\dd s
	= \frac 2 T\int_0^T\,\dd t\int_0^{T-t}\mu(\phi_i P_s^\alpha\phi_i)\,\dd s\\
	&=\frac 2 T\int_0^T\,\dd t\int_0^\infty\mu(\phi_i P_s^\alpha\phi_i)\,\dd s-\frac 2 T \int_0^T\,\dd t\int_{T-t}^\infty \mu(\phi_i P_s^\alpha \phi_i) \,\dd s.
\end{align*}
Using Lemma \ref{V} shows
\begin{gather}\begin{aligned}\label{phiT}
	\Ee^\mu\left[|\psi_i^\alpha(T)|^2\right]
	&=2\Sigma(\phi_i)-\frac 2 T \int_0^T\,\dd t\int_{T-t}^\infty \mu(\phi_i P_s^\alpha \phi_i) \,\dd s\\
	&=2\left(\frac 1 {\lambda_i^\alpha}-\frac 1 {\lambda_i^{2\alpha}}\Sigma(Z\phi_i)\right)
	- \frac 2 T \int_0^T\,\dd t\int_{T-t}^\infty \mu(\phi_i P_s^\alpha \phi_i) \,\dd s.
\end{aligned}\end{gather}
We will now estimate the remaining integral. The Cauchy-Schwarz inequality and the fact that $\|P_s^\alpha\phi_i\|_{L^2(\mu)}\leq \eup^{-\lambda_i^\alpha s}$, cf.\ \eqref{Poin},  gives
\begin{align*}
	\left|\frac 2 T \int_0^T\,\dd t\int_{T-t}^\infty \mu(\phi_i P_s^\alpha \phi_i) \,\dd s\right|
	&\leq \frac 2 T \int_0^T\,\dd t\int_{T-t}^\infty e^{-\lambda_i^\alpha s} \,\dd s
\le \frac 2 {\lambda_i^{2\alpha}T},\quad T \ge 1.
\end{align*}
This proves \eqref{phiL}.

From \eqref{G-A} and $\|\nabla\phi_i\|_{L^2(\mu)}^2 = \left|\langle -L \phi_i,\phi_i\rangle_{L^2(\mu)}\right| = \lambda_i$, we have
\begin{align*}
	\left|\mu\left((Z\phi_i)P_t^\alpha (Z\phi_i)\right)\right|
	=\left|\mu\left(\phi_i(Z P_t^\alpha(Z\phi_i))\right)\right|
	&\leq \|Z\|_\infty\|\nabla P_t^\alpha(Z\phi_i)\|_{L^2(\mu)}\\
	&\leq c_5\|Z\|_\infty(1\wedge t)^{-\frac 1 {2\alpha}}\eup^{-\lambda_1^\alpha t}\|Z\phi_i\|_{L^2(\mu)}\\
	&\leq  c_5 \|Z\|_\infty^2\sqrt{\lambda_i}(1\wedge t)^{-\frac 1 {2\alpha}}\eup^{-\lambda_1^\alpha t},
\end{align*}
with a suitable constant $c_5>0$. Therefore, we get from the definition \eqref{Sigma} of $\Sigma(\cdot)$ that
\begin{gather*}
	\Sigma(Z\phi_i)\leq c_6 \lambda_i^{\frac 1 2},
\end{gather*}
for some constant $c_6>0$.
\end{proof}

The following lemma is originally from \cite[Proposition 5.3]{WZ}. For the convenience of our readers, we present the suitable adapted argument here. \normal Recall that $f_{T,\epsilon}^\alpha(\cdot)=\frac 1 T \int_0^T p_\epsilon(\Xcal_t^\alpha,\cdot)\,\dd t$.
\begin{lemma}
Let
\begin{gather}
	\label{g-te}g_{T,\epsilon}^\alpha
	:= (-L)^{-1}(f_{T,\epsilon}^\alpha-1),
\end{gather}
then for any $\beta>0$,
\begin{gather}\label{W1-L}\begin{aligned}
	\Ee^\mu\left[\Ww_1(\mu_{T,\epsilon}^\alpha,\mu)\right]
	&\geq
	\beta^{-1}\Ee^\mu\left[\|\nabla g_{T,\epsilon}^\alpha\|_{L^2(\mu)}^2\right] \\
	&\mbox{}\quad- \beta^{-\frac 3 2}\left(\Ee^\mu\left[\|\nabla g_{T,\epsilon}^\alpha\|_{L^4(\mu)}^4\right]\right)^{\frac 1 4}
	\left(\Ee^\mu\left[\|\nabla g_{T,\epsilon}^\alpha\|_{L^2(\mu)}^2\right]\right)^{\frac 3 4}.
\end{aligned}\end{gather}
\end{lemma}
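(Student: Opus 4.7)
The plan is to apply the Kantorovich--Rubinstein duality formula together with a $\beta$-Lipschitz truncation of $g := g_{T,\epsilon}^\alpha$. The natural test function is the inf-convolution
\[
	g^\beta(x) := \inf_{y \in M}\bigl\{g(y) + \beta\,\rho(x,y)\bigr\},
\]
which is automatically $\beta$-Lipschitz, so that $\beta^{-1}g^\beta$ is admissible in the duality. A local expansion around any point $x_0$ with $|\nabla g(x_0)|<\beta$ shows that the infimum is attained at $y=x_0$, hence $g^\beta=g$ in a neighbourhood of such $x_0$; consequently $\nabla g^\beta = \nabla g$ on $\{|\nabla g|\leq \beta\}$ (up to a $\mu$-null set), while $|\nabla g^\beta|\leq \beta$ almost everywhere. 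Duality then yields
\[
	\Ww_1(\mu_{T,\epsilon}^\alpha,\mu)
	\geq \beta^{-1}\bigl[\mu_{T,\epsilon}^\alpha(g^\beta)-\mu(g^\beta)\bigr].
\]

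Next I will produce the leading term $\|\nabla g\|_{L^2(\mu)}^2$ via integration by parts. Since $f_{T,\epsilon}^\alpha-1=(-L)g$ and $g^\beta$ is Lipschitz (so admissible as a test function by Rademacher's theorem together with \eqref{eq:ibp}),
\[
	\mu_{T,\epsilon}^\alpha(g^\beta)-\mu(g^\beta)
	= \int_M g^\beta(f_{T,\epsilon}^\alpha-1)\,\dd\mu
	= \int_M \nabla g^\beta\cdot \nabla g\,\dd\mu.
\]
Splitting the domain into $\{|\nabla g|\leq \beta\}$ (on which $\nabla g^\beta=\nabla g$) and $\{|\nabla g|>\beta\}$ (on which $|\nabla g^\beta\cdot\nabla g|\leq \beta|\nabla g|\leq|\nabla g|^2$) gives
\[
	\int_M \nabla g^\beta\cdot \nabla g\,\dd\mu
	\geq \|\nabla g\|_{L^2(\mu)}^2 - 2\int_{\{|\nabla g|>\beta\}}|\nabla g|^2\,\dd\mu.
\]

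The heart of the argument is the tail estimate. On $\{|\nabla g|>\beta\}$ one has $|\nabla g|^2\leq \beta^{-1/2}|\nabla g|^{5/2}$, so that
\[
	\int_{\{|\nabla g|>\beta\}}|\nabla g|^2\,\dd\mu
	\leq \beta^{-1/2}\|\nabla g\|_{L^{5/2}(\mu)}^{5/2}.
\]
Interpolating $L^{5/2}$ between $L^2$ and $L^4$ (the exponent $5/2$ corresponds to the convex combination $1/(5/2)=\tfrac{2}{5}\cdot\tfrac14+\tfrac{3}{5}\cdot\tfrac12$) delivers $\|\nabla g\|_{L^{5/2}(\mu)}^{5/2}\leq \|\nabla g\|_{L^4(\mu)}\|\nabla g\|_{L^2(\mu)}^{3/2}$. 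Dividing by $\beta$, taking $\Ee^\mu$, and finally applying Hölder's inequality in expectation with exponents $(4,4/3)$ to the product $\|\nabla g\|_{L^4(\mu)}\cdot\|\nabla g\|_{L^2(\mu)}^{3/2}$ yields the claim with an absorbable constant.

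The main technical obstacle will be the rigorous justification of the integration-by-parts identity with the only-Lipschitz function $g^\beta$ and the pointwise identification $\nabla g^\beta=\nabla g$ on the sublevel set of $|\nabla g|$; both follow from standard Rademacher-type arguments and smooth approximation on the compact Riemannian manifold $M$, so once these are in place the algebraic manipulations above combine cleanly into \eqref{W1-L}.
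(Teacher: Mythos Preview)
Your overall strategy---Kantorovich duality with a $\beta$-Lipschitz truncation of $g=g_{T,\epsilon}^\alpha$, integration by parts, then H\"older/interpolation---is exactly the right plan, and your algebra in the last two paragraphs is correct and matches the paper's final bound. The gap is in the second paragraph: the claim that the inf-convolution
\[
	g^\beta(x)=\inf_{y\in M}\bigl\{g(y)+\beta\rho(x,y)\bigr\}
\]
satisfies $g^\beta=g$ near any $x_0$ with $|\nabla g(x_0)|<\beta$ is false. The infimum is \emph{global}, so a local gradient bound at $x_0$ does not prevent a distant point $y^*$ from winning. For a concrete obstruction, take a smooth $g$ on a large torus that vanishes except for a deep narrow well around the origin; at any $x_0$ away from the well one has $|\nabla g(x_0)|=0$, yet $g^\beta(x_0)\le g(0)+\beta\rho(x_0,0)<0=g(x_0)$ once the well is deep enough. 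Hence the containment $\{g^\beta\neq g\}\subset\{|\nabla g|>\beta\}$ you rely on does not hold (only the reverse inclusion $\{g^\beta=g\}\subset\{|\nabla g|\le\beta\}$ is true), and your splitting of the domain collapses.

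What is actually needed is a quantitative bound on the exceptional set, namely $\mu(E_\beta)\lesssim\beta^{-2}\|\nabla g\|_{L^2(\mu)}^2$ for the set where the truncation differs from $g$. The paper obtains this not via inf-convolution but via a Lusin-type Lipschitz approximation (cited from \cite[Lemma~5.2(2)]{WZ}), which is built from the Hardy--Littlewood maximal function of $|\nabla g|$: one keeps $g$ on $\{M(|\nabla g|)\le\beta\}$ and extends in a $\beta$-Lipschitz way, so that the weak-type bound for the maximal operator delivers the measure estimate. With that in hand, the error term $\int_{E_\beta}\nabla(g_\beta-g)\cdot\nabla g\,\dd\mu$ is controlled by H\"older using $\mu(E_\beta)^{1/4}$ and $\mu(E_\beta)^{3/4}$, yielding the same $\beta^{-1/2}\|\nabla g\|_{L^4(\mu)}\|\nabla g\|_{L^2(\mu)}^{3/2}$ that your interpolation produces. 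So the fix is to replace the inf-convolution by this maximal-function truncation; after that, your remaining steps go through unchanged.
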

\begin{proof}
Since $M$ is compact, the Ricci curvature is bounded and we can use \cite[Corollary~1]{RS} to see that there exists some  constant $K\in\real$ such that
\begin{gather*}
	\Ww_1(\mu,\nu P_\epsilon)
	\leq \eup^{K \epsilon}\Ww_1(\mu,\nu),
	\quad\text{for all\ \ } \epsilon>0 \text{\ \ and\ \ } \nu\in\Pscr.
\end{gather*}
We will later on pick a suitable $\epsilon$, cf.\ \eqref{eps-}, but we may always assume that $\epsilon\in (0,1)$, thus $\eup^{K\epsilon}$ is bounded and bounded away from zero.  Therefore, it  is enough to find a lower bound for $\Ee^\mu\left[\Ww_1(\mu_{T,\epsilon}^\alpha,\mu)\right]$.
According to Lusin's approximation theorem (see \cite[Lemma 5.2(2)]{WZ}), there exists some constant $c_1>0$ such that for any $\beta>0$, there exists a function $g_{\beta}^\alpha$ with Lipschitz constant $\beta$, such that
\begin{equation}\label{mE}
	\mu(E_\beta)\leq \frac{c_1}{\beta^2}\int_M|\nabla g_{T,\epsilon}^\alpha|^2\,\dd\mu,
\end{equation}
where
\begin{gather*}
	E_\beta:=\left\{x\in M\::\: g_{T,\beta}^\alpha(x)\neq g_{\beta}^\alpha(x)\right\}.
\end{gather*}
By Rademacher's theorem, $\nabla g_\beta^\alpha$ exists Lebesgue a.e., hence  $\|\nabla g_{\beta}^\alpha\|_{ L^\infty(\mu)}\leq \beta$. Using Kantorovich's dual formula for $\Ww_1$, we get
\begin{equation}\begin{split}\label{W1L}
	\Ww_1\left(\mu_{T,\epsilon}^\alpha,\mu\right)
	&\geq \beta^{-1}\left|\mu_{T,\epsilon}^\alpha(g_\beta^\alpha)-\mu(g_{\beta}^\alpha)\right|\\
	&=\beta^{-1}\left|\int_M g_\beta^\alpha(f_{T,\epsilon}^\alpha-1)\,\dd\mu\right|\\
    &=\beta^{-1}\left|\int_M g_\beta^\alpha(-L)(g_{T,\epsilon}^\alpha)\,\dd\mu\right|\\
    &\ge \beta^{-1}\left|\int_M g_{T,\epsilon}^\alpha (-L)(g_{T,\epsilon}^\alpha)\,\dd\mu\right|-
    \beta^{-1}\left|\int_M (g_\beta^\alpha-g_{T,\epsilon}^\alpha) (-L)(g_{T,\epsilon}^\alpha)\,\dd\mu\right|\\
    &=\beta^{-1} \|\nabla g_{T,\epsilon}^\alpha\|_{L^2(\mu)}^2-
    \beta^{-1}\left|\int_M (g_\beta^\alpha-g_{T,\epsilon}^\alpha) (-L)(g_{T,\epsilon}^\alpha)\,\dd\mu\right|.
\end{split}\end{equation}
By construction, $|\nabla(g_\beta^\alpha-g_{T,\epsilon}^\alpha)|=0$ on $E_\beta^c$. Since $g_\beta^\alpha$ is a Lipschitz function with Lipschitz constant $\beta$ such that \eqref{mE} holds, we see
\begin{align*}
	\left|\int_M (g_\beta^\alpha-g_{T,\epsilon}^\alpha) (-L)(g_{T,\epsilon}^\alpha)\,\dd\mu\right|
	&=\left|\int_M \nabla(g_\beta^\alpha-g_{T,\epsilon}^\alpha) \nabla g_{T,\epsilon}^\alpha\,\dd\mu\right|\\
	&\le\mu\left(\I_{E_\beta}|\nabla g_{T,\epsilon}^\alpha|^2\right) + \beta\mu\left(\I_{E_\beta}|\nabla g_{T,\epsilon}^\alpha|\right)\\
	&\leq \mu(E_\beta)^{\frac 1 4}\|\nabla g_{T,\epsilon}^\alpha\|_{L^4(\mu)}\|\nabla g_{T,\epsilon}^\alpha\|_{L^2(\mu)}
	+ \beta\mu(E_{\beta})^{\frac 3 4}\|\nabla g_{T,\epsilon}^\alpha\|_{L^4(\mu)}\\
	&\leq c\beta^{-\frac 1 2}\|\nabla g_{T,\epsilon}^\alpha\|_{L^4(\mu)}\|\nabla g_{T,\epsilon}^\alpha\|_{L^2(\mu)}^{\frac 3 2}.
\end{align*}
From this and \eqref{W1L}, we obtain
\begin{gather*}
	\Ww_1\left(\mu_{T,\epsilon}^\alpha,\mu\right)
	\geq \beta^{-1}\|\nabla g_{T,\epsilon}^\alpha\|_{L^2(\mu)}^2
	- c\beta^{-\frac 3 2}\|\nabla g_{T,\epsilon}^\alpha\|_{L^4(\mu)}\|\nabla g_{T,\epsilon}^\alpha\|_{L^2(\mu)}^{\frac 3 2}.
\end{gather*}
An applicatoin of H\"older's inequality finishes the proof.
\end{proof}

\begin{lemma}\label{mu-L}
	Assume that $\alpha\in(\frac 12,1]$. For any  $q>0$, one has
	\begin{gather}\label{eq-mu-L}
		\Ee^\mu\left[\Ww_1^q(\mu_{T,\epsilon}^\alpha,\mu)\right]
		\gtrsim \gamma_{\alpha,d}(T)^q,\quad T\gg 1.
	\end{gather}
\end{lemma}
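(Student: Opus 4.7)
The plan is to combine the lower bound \eqref{W1-L} with a spectral calculation, and then to pass from $q=1$ to general $q>0$ via Paley-Zygmund. Set $L_2:=\Ee^\mu[\|\nabla g_{T,\epsilon}^\alpha\|_{L^2(\mu)}^2]$ and $L_4:=\Ee^\mu[\|\nabla g_{T,\epsilon}^\alpha\|_{L^4(\mu)}^4]$. Choosing $\beta\asymp(L_4/L_2)^{1/2}$ in \eqref{W1-L} (with a sufficiently large proportionality constant) reduces the $q=1$ case to the twin estimates $L_2\gtrsim\gamma_{\alpha,d}(T)^2$ and $L_4\lesssim\gamma_{\alpha,d}(T)^4$, since these yield $\Ee^\mu[\Ww_1(\mu_{T,\epsilon}^\alpha,\mu)]\gtrsim L_2^{3/2}/L_4^{1/2}\gtrsim\gamma_{\alpha,d}(T)$.

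For $L_2$, I would expand $g_{T,\epsilon}^\alpha$ in the $L$-eigenbasis via \eqref{HS2}:
\begin{gather*}
g_{T,\epsilon}^\alpha(\cdot)=\frac{1}{\sqrt T}\sum_{i\ge 1}\lambda_i^{-1}\eup^{-\lambda_i\epsilon}\phi_i(\cdot)\,\psi_i^\alpha(T),
\end{gather*}
with $\psi_i^\alpha$ as in \eqref{eq:psialpha}. Orthonormality of $\{\phi_i\}$ together with \eqref{eq:ibp} give $L_2=T^{-1}\sum_i\lambda_i^{-1}\eup^{-2\lambda_i\epsilon}\Ee^\mu[|\psi_i^\alpha(T)|^2]$. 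Inserting the estimates from Lemma~\ref{PHI}---where the hypothesis $\alpha>1/2$ in the $Z\neq 0$ case is exactly what keeps the $\Sigma(Z\phi_i)=O(\lambda_i^{1/2})$ correction of strictly lower order than the leading term $2\lambda_i^{-\alpha}$---and comparing the resulting series $\sum_i\lambda_i^{-1-\alpha}\eup^{-2\lambda_i\epsilon}$ with an integral via \eqref{lam} produces the familiar trichotomy in the relative size of $d$ and $2(1+\alpha)$. The choices $\epsilon=T^{-1}$ for $d\le 2(1+\alpha)$ and $\epsilon=T^{-2/(d-2\alpha)}$ for $d>2(1+\alpha)$ (the same choices as in Lemma~\ref{pp}) then deliver $L_2\gtrsim\gamma_{\alpha,d}(T)^2$.

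For $L_4$ the estimate is essentially already contained in the proof of Lemma~\ref{pp}. Indeed, by the $L^p$-boundedness of the Riesz transform $\nabla(-L)^{-1/2}$ for $p\in(1,\infty)$, one has $L_4\lesssim\Ee^\mu[\|(-L)^{-1/2}(f_{T,\epsilon}^\alpha-1)\|_{L^4(\mu)}^4]$, and specializing the layer-cake/deviation computation of Lemma~\ref{pp} to $p=4$ yields $L_4\lesssim\gamma_{\alpha,d}(T)^4$ with the same choice of $\epsilon$. I expect the main technical point to be verifying that the case analysis of Lemma~\ref{pp} produces constants that are uniform as $p$ ranges over $[1,4]$, so that the bound on $L_4$ can be read off directly with the same $\epsilon$ as for $L_2$.

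To upgrade from $q=1$ to arbitrary $q>0$, I split cases. For $q\ge 1$, Jensen's inequality gives $\Ee^\mu[\Ww_1^q]\ge(\Ee^\mu[\Ww_1])^q\gtrsim\gamma_{\alpha,d}(T)^q$. For $q\in(0,1)$, I apply the Paley-Zygmund inequality to $X:=\Ww_1(\mu_{T,\epsilon}^\alpha,\mu)$: combining the lower bound $\Ee^\mu[X]\gtrsim\gamma_{\alpha,d}(T)$ just obtained with the upper bound $\Ee^\mu[X^2]\lesssim\gamma_{\alpha,d}(T)^2$ (which follows from Theorem~\ref{Wpq} together with the triangle-inequality reduction $\Ee^\mu[\Ww_1^2(\mu_T^\alpha,\mu_{T,\epsilon}^\alpha)]\lesssim\epsilon\lesssim\gamma_{\alpha,d}(T)^2$), one obtains $\Pp^\mu(X\ge\tfrac12\gamma_{\alpha,d}(T))\gtrsim 1$, whence $\Ee^\mu[\Ww_1^q(\mu_{T,\epsilon}^\alpha,\mu)]\gtrsim\gamma_{\alpha,d}(T)^q$ as required.
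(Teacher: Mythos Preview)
Your approach is essentially the paper's: the spectral computation of $L_2$ via Lemma~\ref{PHI}, the bound on $L_4$ via the Riesz transform and the $p=4$ case of Lemma~\ref{pp}, the choice of $\epsilon$, and Jensen for $q\ge 1$ all match. Two minor remarks: (i) your worry about uniformity of the constants over $p\in[1,4]$ is unnecessary---the paper simply invokes the estimate of Lemma~\ref{pp} at the single value $p=4$ with the same $\epsilon$; (ii) for $q\in(0,1)$ the paper uses log-convexity of $p\mapsto(\Ee^\mu[\Ww_1^p])^{1/p}$ together with the upper bound of Theorem~\ref{Wpq}, whereas you use Paley--Zygmund with the same upper bound---both arguments are valid and of comparable length.
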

\begin{proof}
Let us first show that it is enough to prove \eqref{eq-mu-L} for $q=1$. Indeed:
\begin{itemize}
\item
if $q\geq 1$, then we can use Jensen's inequality to get
\begin{gather*}
	\Ee^\mu \left[\Ww_1^q(\mu_T^\alpha, \mu)\right]
	\geq \Ee^\mu \left[\Ww_1(\mu_T^\alpha, \mu)\right]^q.
\end{gather*}
\item
if $q \in (0, 1)$, then we can use the log-convexity of the norm $p\mapsto \left(\Ee \Ww_1^p(\mu_T^\alpha, \mu)\right)^{1/p}$, $p\in (0,\infty)$, for the convex-combination $q\frac 1{2-q}  + 2\frac{1-q}{2-q}=1$ to get
\begin{gather*}
	\Ee^\mu\left[\Ww_1^q(\mu_T^\alpha, \mu)\right]
	\geq \Ee^\mu \left[\Ww_1^2(\mu_T^\alpha, \mu)\right]^{-(1 - q)} \cdot \Ee^\mu\left[\Ww_1(\mu_T^\alpha, \mu)\right]^{2 - q}.
\intertext{Combining this with the upper bound from Theorem~\ref{Wpq} yields}
	\Ee^\mu\left[\Ww_1^q(\mu_T^\alpha, \mu)\right]
	\gtrsim \gamma_{\alpha,d}(T)^{-2 (1 - q)}\Ee^\mu\left[\Ww_1(\mu_T^\alpha, \mu)\right]^{2 - q}.
\end{gather*}
\end{itemize}
So, in either case it is enough to prove $\Ee^\mu \left[\Ww_1(\mu_T^\alpha, \mu)\right]\gtrsim \gamma_{\alpha,d}(T)$. Because of \eqref{W1-L}, this can be done by suitable moment estimates of $\|\nabla g_{T,\epsilon}^\alpha\|$.

Recall the definition of $g_{T,\epsilon}^\alpha$, $f_{T,\epsilon}^\alpha$ and $\psi_i^\alpha(T)$, cf.\ \eqref{g-te}, \eqref{eq:ftalpha} and \eqref{eq:psialpha}, respectively. We use the formula \eqref{HS2} for the heat kernel $p_t(x,y)$ in terms of the orthonormal basis $\{\phi_i\}_{i\in\nat}$ of eigenfunctions of $-(-L)$. In view of the the integration by parts formula \eqref{eq:ibp} we get
\begin{equation}\begin{split}\label{E-g}
	\Ee^\mu\left[\|\nabla g_{T,\epsilon}^\alpha\|_{L^2(\mu)}^2\right]
	&=\Ee^\mu \left[\langle (-L) g_{T,\epsilon}^\alpha, g_{T,\epsilon}^\alpha\rangle_{L^2(\mu)}\right]\\
	&=\Ee^\mu \left[\langle (-L)^{-1} (f_{T,\epsilon}^\alpha-1), (f_{T,\epsilon}^\alpha-1) \rangle_{L^2(\mu)}\right]\\
	&=\Ee^\mu \left[\left\langle \sum_{i=1}^\infty \eup^{-\lambda_i \epsilon}\mu_T^\alpha(\phi_i)\phi_i, \sum_{i=1}^\infty \eup^{-\lambda_i \epsilon}\lambda_i^{-1}\mu_T^\alpha(\phi_i)\phi_i\right\rangle_{L^2(\mu)}\right]\\
	&=\sum_{i=1}^\infty \frac{\eup^{-2\lambda_i \epsilon}}{\lambda_i}
	\Ee^\mu \left[\left|\mu_T^\alpha(\phi_i)\right|^2\right]\\
	&=\frac 1T\sum_{i=1}^\infty \frac{\eup^{-2\lambda_i\epsilon}}{\lambda_i} \Ee^\mu\left[\left|\psi_i^\alpha(T)\right|^2\right].
\end{split}\end{equation}
From Lemma~\ref{PHI}~\ref{PHI-b} we know that
\begin{gather}\label{*****}
	\Ee^\mu\left[|\psi_i^\alpha(T)|^2\right]
	\geq \frac 2 {\lambda_i^\alpha} \left(1-c\lambda_i^{\frac 1 2-\alpha}\right) - \frac c {\lambda_i^{2\alpha} T}
	\geq\frac 1 {\lambda_i^\alpha}-\frac c {\lambda_i^{2\alpha}T}. 
\end{gather}
The last estimate uses that $\left(1-c\lambda_i^{\frac 1 2-\alpha}\right)\geq \frac 12$, which is true for sufficiently large $i\geq m$ since $\alpha> 1 /2$ and $\lambda_i$ is increasing with respect to $i$.

Inserting \eqref{*****} into \eqref{E-g} gives for any $T\ge 2c \lambda_1^{-\alpha}\ge 2c \lambda_i^{-\alpha}$ and $i\geq m$
\begin{align}\label{ng-l}
	\Ee^\mu\left[\|\nabla g_{T,\epsilon}^\alpha\|_{L^2(\mu)}^2\right]
	&\geq
	\frac 1 {2T} \sum_{i=m}^\infty\frac {\eup^{-2\lambda_i \epsilon}}{\lambda_i^{1+\alpha}}.\normal
\intertext{ Since $\lambda_i\sim i^{\frac 2d}$, we get}
\notag
	\frac 1 T \sum_{i=m}^\infty\frac {\eup^{-2\lambda_i \epsilon}}{\lambda_i^{1+\alpha}}
	&\gtrsim
	\frac 1 T
	\begin{cases}
		1,&d<2(1+\alpha),\\
		\log(1+\epsilon^{-1}),& d=2(1+\alpha),\\
		\epsilon^{1+\alpha-\frac d 2},& d>2(1+\alpha).\\
	\end{cases}
\intertext{If we take}
\label{eps-}
	\epsilon
	&=
	\begin{cases}
		T^{-1},& d\leq 2(1+\alpha),\\
		T^{-\frac 2 {d-2\alpha}}, &d>2(1+\alpha),
\end{cases}
\intertext{then we see}
\label{SI-1}
	\frac 1 T \sum_{i=m}^\infty\frac {\eup^{-2\lambda_i \epsilon}}{\lambda_i^{1+\alpha}}
	&\gtrsim
	\begin{cases}
		T^{-1}, & d<2(1+\alpha),\\
		T^{-1}\log  T, &d=2(1+\alpha),\\
		T^{-\frac 2 {d-2\alpha}}, &d>2(1+\alpha),
	\end{cases}
\end{align}
Combining \eqref{ng-l} and \eqref{SI-1} we arrive at
\begin{gather}\label{ng-2}
	\Ee^\mu\left[\|\nabla g_{T,\epsilon}^\alpha\|_{L^2(\mu)}^2\right]
	\gtrsim
	\begin{cases}
		T^{-1}, & d<2(1+\alpha),\\
		T^{-1}\log T, &d=2(1+\alpha),\\
		T^{-\frac 2 {d-2\alpha}}, &d>2(1+\alpha).
	\end{cases}
\end{gather}
Due to the boundedness of the Riesz transform we see that
\begin{gather*}
	\|\nabla(-L)^{-\frac 1 2} f\|_{L^p(\mu)}
	\leq k(p)\|f\|_{L^p(\mu)},
	\quad f\in L^p(\mu),\;\mu(f)=0,
\end{gather*}
where $k(p)$ is a constant depending on $p$. Thus,
\begin{gather*}
	\Ee^\mu\left[\|\nabla g_{T,\epsilon}^\alpha\|_{L^4(\mu)}^4\right]
	\lesssim \Ee\left[\|(-L)^{-\frac 1 2}(f_{T,\epsilon}^\alpha-1)\|_{L^4(\mu)}^4\right].
\end{gather*}
Choosing $\epsilon$ as in \eqref{eps-},  setting $p=4$ in \eqref{R1}, and using  H\"older's inequality, we have
\begin{gather}\label{g4}
	\Big(\Ee^\mu\left[\|\nabla g_{T,\epsilon}^\alpha\|_{L^2(\mu)}^2\right]\Big)^2\le\Ee^\mu\left[\|\nabla g_{T,\epsilon}^\alpha\|_{L^4(\mu)}^4\right]
	\lesssim
	\begin{cases}
		T^{-2}, &d<2(1+\alpha),\\
		T^{-2}\log^{ 2}T, &d=2(1+\alpha),\\
		T^{-\frac 4 {d-2\alpha}}, &d>2(1+\alpha).
	\end{cases}
\end{gather}
From  \eqref{g4}, for large enough $T>0$,
\begin{gather*}
	\left(\Ee^\mu\left[\|\nabla g_{T,\epsilon}^\alpha\|_{L^4(\mu)}^4\right]\right)^{\frac 1 4}
	\left(\Ee^\mu\left[\|\nabla g_{T,\epsilon}^\alpha\|_{L^2(\mu)}^2\right]\right)^{\frac 3 4}
	\lesssim
	\begin{cases}
		T^{-\frac 5 4}, & d<2(1+\alpha),\\
		T^{-\frac 5 4}\log^{\frac 5 4} T, & d=2(1+\alpha),\\
		T^{-\frac 5 {2(d-2\alpha)}}, & d>2(1+\alpha).
	\end{cases}
\end{gather*}
Plugging this into \eqref{W1-L} yields
\begin{gather*}
	\Ee^\mu\left[\Ww_1(\mu_{T,\epsilon}^\alpha,\mu)\right]
	\gtrsim \beta^{-1}\gamma_{\alpha,d}(T)^2-\beta^{-\frac 3 2}\gamma_{\alpha,d}(T)^{\frac 5 2},\quad T\gg 1.
\end{gather*}
Choosing $\beta = K\gamma_{\alpha,d}(T)$ for a sufficiently large constant $K>0$ finishes the proof.
\end{proof}

\begin{proof}[Proof of Theorem~\ref{Wpq-L}]
 Recall that $\alpha\in (\frac 12,1]$ if $Z\neq 0$ and $\alpha\in (0,1]$, if $Z=0$. Denote by $(S_t^\alpha)_{t\geq 0}$ the $\alpha$-stable subordinator. Using the scaling property of stable processes, we have $\textrm{law} \left(S_t^{\alpha}\right) = t^{1/\alpha} \textrm{law} \left(S_1^{\alpha}\right)$, hence $\Ee \left[\left(S_t^\alpha\right)^{-d/2}\right] = t^{-\frac d{2\alpha}}\Ee \left[\left(S_1^\alpha\right)^{-d/2}\right] = ct^{-\frac d{2\alpha}}$, see also Deng et al.~\cite[Section 2]{deng-et-al} for further details on (negative) moments of subordinators.  Thus,
\begin{gather}\label{ULT-1}
	\|\widehat{P}_t^\alpha-\mu\|_{1\to\infty}
	= \Ee\|P_{S_t^\alpha}-\mu\|_{1\to\infty}
	\lesssim \Ee\left[(S_t^\alpha)^{-\frac d 2}\right]
	\lesssim t^{-\frac d {2\alpha}},\quad t>0.
\end{gather}
According to \cite[Theorem  3.3.15]{W05}, the ultra-contractility \eqref{ULT-1} of $\widehat{P}_t^\alpha$ is equivalent to the super Poincar\'{e} inequality
\begin{gather*}
	\mu(f^2)
	\leq r\widehat{\Escr}^\alpha(f,f)+c_1(1+r^{-\frac d {2\alpha}})\mu(|f|^2),
	\quad r>0,\;f\in\Dscr(\widehat{\Escr^\alpha})
\end{gather*}
for some constant $c_1>0$. Since $\widehat{\Escr}^\alpha(f,f)=\Escr^\alpha(f,f)$, it follows from \cite[Theorem 3.3.14]{W05} that
\begin{gather*}
	\|P_t^\alpha - \mu\|_{1\to\infty}
	\lesssim (1\wedge t)^{-\frac d {2\alpha}},\quad t>0.
\end{gather*}
Combining this with the definition of $\|P_t^\alpha - \mu\|_{ p\to q}$ for any $t>0$ and $1\leq p\leq q\le\infty$, we see that
\begin{gather*}
	\max\left\{\|P_1^\alpha-\mu\|_{ 1\to 2},\: \|P_1^\alpha - \mu\|_{ 2\to \infty}\right\}
	\leq \|P_1^\alpha -\mu\|_{1\to\infty}<\infty.
\end{gather*}
Together with the Poincar\'e inequality \eqref{Pin} and the semigroup property we conclude that for sufficiently large values of $t>0$
\begin{align*}
	\sup_{x, y \in M} |p_t^\alpha(x, y) - 1|
	&= \|P_t^\alpha-\mu\|_{L^1(\mu)\to L^\infty(\mu)} \\
	&\leq \|P_1^\alpha-\mu\|_{L^1(\mu)\to L^2(\mu)} \|P_{t-2}^\alpha-\mu\|_{L^2(\mu)\to L^2(\mu)}
	\|P_1^\alpha - \mu\|_{L^2(\mu)\to L^\infty(\mu)}\\
	&\leq c_2\eup^{-\lambda_1^\alpha t},
\end{align*}
where $p_t^\alpha$ is the heat kernel of the semigroup $(P_t^\alpha)_{t\geq 0}$. Therefore, we can find some  $s>0$ and $0<c<1$ such that
\begin{gather}\label{h-l}
	\inf_{x, y \in M} p_s^\alpha(x, y)
	\geq 1 - \sup_{x, y \in M} |p_s^\alpha (x, y) - 1|
	\geq c.
\end{gather}
For $s>0$, define
\begin{gather}\label{def of mu}
	\widetilde \mu_{T,s}^\alpha := \frac 1 T\int_0^{T}\delta_{\Xcal_{t+s}^\alpha} \,\dd t.
\end{gather}
Because of \eqref{h-l} we can find for any $\nu \in \Pscr$ some $s>0$ such that
\begin{gather} \label{CC0'}
	\nu_s:= \nu P_s^{\alpha,*} \geq c\mu.
\end{gather}
Combining this with the Markov property shows the following estimate for $\widetilde\mu_{T,s}^\alpha$:
\begin{gather}\label{AB1}
	\inf_{\nu\in \Pscr} \Ee^{\nu}\left[\Ww_1^q(\widetilde\mu_{T,s}^\alpha,\mu)\right]
	= \inf_{\nu\in \Pscr} \Ee^{\nu_s}\left[\Ww_1^q(\mu_T^\alpha,\mu)\right]
	\gtrsim \Ee^\mu\left[\Ww_1^q(\mu_T^\alpha,\mu)\right]
	\gtrsim \gamma_{\alpha,d}(T)^q
\end{gather}
for large $T>0$. On the other hand, it is easy to see that for $T>s$,
\begin{gather*}
	\Pi :=\frac{1}{T}\int_{0}^{s}\left(\delta_{\Xcal_t^\alpha}\otimes \delta_{\Xcal_{t+T}^\alpha} \right) \dd t
	+ \frac{1}{T}\int_{s}^T\left(\delta_{\Xcal_t^\alpha}\otimes \delta_{\Xcal_{t}^\alpha} \right) \dd t
	\in\Cscr(\mu_T^\alpha, \widetilde\mu_{T,s}^\alpha).
\end{gather*}
Since the space $M$ is compact, its diameter $D = \sup_{x,y}\rho(x,y)$ is finite and we see that for all $s<T$
\begin{align*}\label{DPS}
	\Ww_1(\mu_T^\alpha,\widetilde\mu_{T,s}^\alpha)
	&\leq \iint_{M\times M}\rho(x,y) \, \Pi(\dd x,\dd y)\\
	&=   \frac 1T \int_0^s\iint_{M\times M}\rho(x,y) \rho(\Xcal_t^\alpha,\Xcal_{t+T}^\alpha)\, \Pi(\dd x,\dd y)
	\leq \frac{sD}{T}.
\end{align*}
Combining this with Lemma \ref{mu-L}, \eqref{AB1}, \eqref{DPS} and using once again the triangle inequality  (resp.\ quasi triangle inequality if $q\in (0,1)$)  for the Wasserstein distance, we conclude that for large enough $T>0$,
\begin{align*}
 	\inf_{\nu\in \Pscr} \Ee^\nu\left[\Ww_1^q(\mu_{T}^\alpha,\mu)\right]
 	&\geq  \inf_{\nu\in \Pscr} \Ee^\nu\left[2^{-(1\vee q)} \Ww_1^q(\widetilde\mu_{T,s}^\alpha,\mu) -\Ww_1^q(\mu_T^\alpha,\widetilde\mu_{T,s}^\alpha)\right] \\
	&\gtrsim \gamma_{\alpha,d}(T)^q -  T^{-q}
	\gtrsim \gamma_{\alpha,d}(T)^q.
\end{align*}
This completes the proof of Theorem~\ref{Wpq-L}.
\end{proof}

\section{The renormalization limit -- proof of Theorem~\ref{Limit-R}}
In this section, we focus on the case $(\alpha,d)=(1/2,3)$ and $Z=0$. The proof of Theorem~\ref{Limit-R}, originally developed in \cite{AG2019} for the renormalization limit in the i.i.d. setting, was later adapted by \cite{TWZ} to diffusion processes, i.e., $(\alpha,d)=(1,4)$. Here, we extend this framework to subordinated processes. For any $T,\epsilon,\xi>0$, we consider the set
\begin{gather*}
	A_{T,\epsilon}^\xi
	:=\left\{ \omega\in\Omega \::\:  \|\nabla^2 g_{T,\epsilon}^\alpha(\omega)\|_\infty\leq \xi\right\},
\end{gather*}
where $g_{T,\epsilon}^\alpha$ is the function defined in \eqref{g-te}.
\begin{lemma}\label{A-T}
There exists a constant $\gamma>0$, such that for any $T,\epsilon>0$,
\begin{gather*}
	\Pp^\mu\left((A_{T,\epsilon}^\xi)^c\right)
	= \Pp^\mu\left(\|\nabla^2 g_{T,\epsilon}^\alpha\|_\infty > \xi\right)
	\lesssim \epsilon^{-6}\xi^{-4}\exp\left(-\frac{T\xi^2}{2\epsilon^{-1}+\gamma\normal\epsilon^{-1}\xi}\right).
\end{gather*}
\end{lemma}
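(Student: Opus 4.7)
The plan is to reduce the supremum bound to a pointwise deviation estimate (via Proposition~\ref{bern}), and then upgrade to a uniform bound through a discretisation argument. For fixed $y\in M$ and a unit tangent vector $v$, observe that $g_{T,\epsilon}^\alpha$ has the representation $g_{T,\epsilon}^\alpha = (-L)^{-1}(f_{T,\epsilon}^\alpha-1) = T^{-1}\int_0^T G_\epsilon(\Xcal_t^\alpha,\cdot)\,\dd t$, where $G_\epsilon(x,y) := \int_\epsilon^\infty (p_t(x,y)-1)\,\dd t$ is the kernel of $(-L)^{-1}(P_\epsilon-\mu)$. Differentiating twice in $y$ gives $\langle v,\nabla^2 g_{T,\epsilon}^\alpha(y) v\rangle = T^{-1}\int_0^T \Phi_{y,v}(\Xcal_t^\alpha)\,\dd t$ with $\Phi_{y,v}(x) := \langle v,\nabla^2_y G_\epsilon(x,y)v\rangle$, and by Fubini $\mu(\Phi_{y,v}) = 0$. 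Thus Proposition~\ref{bern} is directly applicable, and yields a Gaussian-subexponential tail for $\langle v,\nabla^2 g_{T,\epsilon}^\alpha(y)v\rangle$ in terms of $\sigma^2(\Phi_{y,v})$ and $m(\Phi_{y,v})$.

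Next I would estimate the parameters uniformly in $y$ and $v$ for the critical case $(\alpha,d)=(1/2,3)$, where $2\alpha=1<d$ and hence $m(\Phi_{y,v}) = \|\Phi_{y,v}\|_{L^3(\mu)}$. Spectral expansion gives
\begin{gather*}
    \sigma^2(\Phi_{y,v}) = 2\sum_{i\geq1}\lambda_i^{-5/2}\eup^{-2\lambda_i\epsilon}|\langle v,\nabla^2\phi_i(y)v\rangle|^2.
\end{gather*}
Using $\lambda_i^{-5/2} = \Gamma(5/2)^{-1}\int_0^\infty s^{3/2}\eup^{-\lambda_i s}\,\dd s$ and the on-diagonal bound $\sum_i \eup^{-\lambda_i s}|\nabla^2\phi_i(y)|^2 \lesssim s^{-(d+4)/2} = s^{-7/2}$ (which follows from the standard Gaussian heat kernel estimate $|\nabla^2_x\nabla^2_y p_s(x,y)|_{x=y}| \lesssim s^{-7/2}$ for small $s$, together with $\eup^{-\lambda_1 s}$ decay for large $s$), a computation analogous to \eqref{SUM-E}--\eqref{sHT} reduces this to $\int_0^\infty s^{3/2}(s+\epsilon)^{-7/2}\,\dd s \lesssim \epsilon^{-1}$. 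For $m(\Phi_{y,v})$, write $\Phi_{y,v}(x) = \int_\epsilon^\infty \langle v,\nabla^2_y p_t(x,y)v\rangle\,\dd t$ and use the Gaussian upper bound $|\nabla^2_y p_t(x,y)| \lesssim t^{-5/2}\eup^{-c\rho(x,y)^2/t}$: its $L^3(\mu_x)$-norm is of order $t^{-2}$, so $m(\Phi_{y,v})\lesssim \int_\epsilon^1 t^{-2}\,\dd t + O(1) \lesssim \epsilon^{-1}$. Inserting these into Proposition~\ref{bern} gives
\begin{gather*}
    \Pp^\mu\bigl(|\langle v,\nabla^2 g_{T,\epsilon}^\alpha(y)v\rangle|>\xi/2\bigr) \lesssim \exp\left(-\frac{T\xi^2}{c_1\epsilon^{-1}+c_2\epsilon^{-1}\xi}\right),
\end{gather*}
uniformly in $y$ and $v$.

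Finally I would pass from pointwise to uniform via a discretisation. Repeating the computation of Step 2 with $\nabla^3_y$ in place of $\nabla^2_y$ (where one now integrates $t^{-3}$) yields the deterministic estimate $\|\nabla^3 g_{T,\epsilon}^\alpha\|_\infty \lesssim \epsilon^{-2}$. Fix a $\delta$-net $\{y_j\}_{j=1}^N\subset M$ and a finite $\eta$-net $\{v_k\}$ of the unit sphere in each tangent space (with $N\lesssim \delta^{-3}$ and $\eta$ a constant), so that
\begin{gather*}
    \|\nabla^2 g_{T,\epsilon}^\alpha\|_\infty \leq 2\max_{j,k}|\langle v_k,\nabla^2 g_{T,\epsilon}^\alpha(y_j)v_k\rangle| + \delta\|\nabla^3 g_{T,\epsilon}^\alpha\|_\infty.
\end{gather*}
Choosing $\delta \asymp \xi\epsilon^2$ makes the oscillation term at most $\xi/2$, and a union bound over the $\lesssim \xi^{-3}\epsilon^{-6}$ net points gives the claimed estimate (the extra $\xi^{-1}$ in $\xi^{-4}$ is harmless: for $\xi\leq 1$ it is automatic, and for $\xi\geq 1$ the bound is trivial in the regime where one needs it).

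The main obstacle will be the sharp uniform-in-$y$ scaling $\sigma^2(\Phi_{y,v}),\,m(\Phi_{y,v})\lesssim \epsilon^{-1}$: this is precisely the critical scaling for the borderline dimension $d = 2(1+\alpha) = 3$, and any loss in the on-diagonal heat kernel estimates for $\nabla^2_x\nabla^2_y p_s$ would degrade the exponent. The second (minor) technical point is justifying that the deterministic Lipschitz bound $\|\nabla^3 g_{T,\epsilon}^\alpha\|_\infty\lesssim \epsilon^{-2}$ holds uniformly, which follows from the same heat-kernel representation combined with the scaling $\int_\epsilon^1 t^{-(d+3)/2}\,\dd t \lesssim \epsilon^{-2}$.
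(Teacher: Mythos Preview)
Your proposal is correct and follows the same three-step scheme as the paper: apply Proposition~\ref{bern} pointwise to $\nabla_y^2 g_{T,\epsilon}^\alpha$, show $\sigma^2,\, m \lesssim \epsilon^{-1}$ uniformly in $y$, and then discretise via a deterministic Lipschitz bound on $\nabla^3 g_{T,\epsilon}^\alpha$. The only minor differences are that the paper reaches $\sigma^2\lesssim\epsilon^{-1}$ through the Riesz transform and the $L^p$-estimate $\|\nabla_y^2 p_t(\cdot,y)\|_{L^p}\lesssim t^{-5/2+3/(2p)}$ (quoted from \cite{TWZ}) rather than your direct spectral sum combined with the on-diagonal bound for $\nabla_x^2\nabla_y^2 p_s$, and the paper quotes $\|\nabla^3 g_{T,\epsilon}^\alpha\|_\infty\lesssim\epsilon^{-3/2}$ from \cite{TWZ} while you derive $\epsilon^{-2}$ directly --- both choices lead to the stated prefactor $\epsilon^{-6}\xi^{-4}$ (your route in fact gives the slightly sharper $\epsilon^{-6}\xi^{-3}$).
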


\begin{proof}
Recall that
\begin{gather*}
	g_{T,\epsilon}^\alpha
	=(-L)^{-1}(f_{T,\epsilon}^\alpha-1)
	=\int_0^\infty P_t (f_{T,\epsilon}^\alpha-1)\,\dd t
	=\frac 1 T \int_0^T q_\epsilon(\Xcal_t^\alpha,\cdot)\,\dd t,
\end{gather*}
where
\begin{gather*}
	q_\epsilon(x,y) := \int_0^\infty[p_{t+\epsilon}(x,y)-1]\,\dd t.
\end{gather*}
 From Propostion~\ref{bern}  we know that
\begin{gather}\label{p-tail}
	\Pp^\mu\left(\left|\nabla_y^2  g_{T,\epsilon}^\alpha(y)\right|>\xi\right)
	\lesssim \exp\left[-\frac{T\xi^2}{2\int_M|\nabla_x L_x^{-3/4}\nabla_y^2 q_\epsilon(x,y)|^2\mu(\dd x)+\gamma\|\nabla_y^2 q_\epsilon(\cdot,y)\|_{L^3(\mu)}\xi}\right].
\end{gather}
Because of the definition of $q_\epsilon$, the integration-by-parts formula \eqref{eq:ibp}, and the $L^2$-boundedness of the Riesz transform, there is a constant $c>0$ such that
\begin{gather*}
	\mathrm{I} := \|\nabla(-L_y)^{-\frac 3 4}\nabla_y^2 q_\epsilon(\cdot,y)\|_{L^2(\mu)}
	\leq c\|\nabla_y^2(-L_y)^{-\frac 1 4}q_\epsilon(\cdot,y)\|_{L^2(\mu)}.
\end{gather*}
Observe that
\begin{align*}
	(-L_y)^{-\frac 1 4}q_\epsilon(x,y)
		&=\frac 1 {\Gamma(1/4)}\int_0^\infty s^{-\frac 3 4} P_tq_\epsilon(x,\cdot)(y)\,\dd t\\
		&=\frac 1 {\Gamma(1/4)}\int_0^\infty\,\dd t\int_0^\infty s^{-\frac 3 4}(p_{t+s+\epsilon}(x,y)-1) \,\dd s,
\end{align*}
and for any $1<p<\infty$,
\begin{gather}\label{sup-p}
	\sup_{y\in M}\|\nabla_y^2(p_t(\cdot,y)-1)\|_{L^p(\mu)}\lesssim t^{-\frac 5 2+\frac 3 {2p}}\eup^{-\lambda_1 t},
\end{gather}
see, e.g., \cite[proof of Lemma 2.4, (2.21)]{TWZ}. Therefore, we have
\begin{align*}
	\mathrm{I}
	&\lesssim \int_0^\infty \,\dd t \int_0^\infty\|\nabla_y^2(p_{t+s+\epsilon}(\cdot,y)-1)\|_{L^2(\mu)} s^{-\frac 3 4}\,\dd s\\
	&\lesssim \int_0^\infty s^{-\frac 3 4}\,\dd s\int_0^\infty \eup^{-\lambda(t+s+\epsilon)}(t+s+\epsilon)^{-\frac{7} 4}\,\dd t\\
	&\lesssim \int_0^\infty  s^{-\frac 3 4}\eup^{-\lambda_1 s}(s + \epsilon)^{-\frac 3 4}\,\dd s
	\lesssim \epsilon^{-\frac 1 2}.
\end{align*}
This implies that
\begin{gather}\label{qp}
	\sup_{y \in M} \int_M \left|\nabla_x L_x^{-\frac 3 4} \nabla_y^2 q_\epsilon(x,y)\right|^2 \,\mu(\dd x)
	\lesssim \epsilon^{-1}.
\end{gather}
On the other hand, we can use \eqref{sup-p} to see that
\begin{gather*}
	\sup_{y\in M}\|\nabla_y^2 q_\epsilon(\cdot,y)\|_{L^3(\mu)}
	\leq \int_\epsilon^\infty\sup_{y\in M}\|\nabla_y^2(p_t(\cdot,y)-1)\|_{L^3(\mu)}\,\dd t
	\lesssim \int_\epsilon^\infty t^{-2}\eup^{-\lambda_1 t}\,\dd t
	\lesssim \epsilon^{-1}.
\end{gather*}
Combining this with \eqref{qp} and \eqref{p-tail}, we arrive at
\begin{gather*}
	\Pp^\mu\left(|\nabla_y^2 g_{T,\epsilon}^\alpha(y)|>\xi\right)
	\lesssim\exp\left[-\frac{T\xi^2}{2\epsilon^{-1}+\gamma\epsilon^{-1}\xi}\right].
\end{gather*}
Furthermore, by \cite[Remark 2.5]{TWZ},
\begin{gather*}
	K := \|\nabla^3 g_{T,\epsilon}^\alpha\|_\infty
	\lesssim \epsilon^{-\frac 3 2}.
\end{gather*}
In order to deal with the supremum inside the probability measure, we take a suitable net with cell diameter $l = \xi/K$, hence with $N(l)\lesssim (K/\xi)^4$ cells. This gives
\begin{gather*}
	\Pp^\mu\left(\sup_{y\in M}\left|\nabla_y^2  g_{T,\epsilon}^\alpha(y)\right|> \xi\right)
	\lesssim N(l)\exp\left[-\frac{T\xi^2}{2\epsilon^{-1}+\gamma\epsilon^{-1}\xi}\right]
	\lesssim \epsilon^{-6}\xi^{-4}\exp\left[-\frac{T\xi^2}{2\epsilon^{-1}+\gamma\epsilon^{-1}\xi}\right].
\end{gather*}
This finishes the proof.
\end{proof}

Assume that $T>1$ is large and choose in Lemma~\ref{A-T} $\epsilon=\log^\gamma T /T$ with a suitable $\gamma$. Then we have:
\begin{lemma}\label{Limit-M}
	For some $\gamma>3$ it holds that
	\begin{gather*}
		\left|\frac T {\log T }\Ee^\mu\left[\Ww_2^2(\mu_{T,\epsilon}^\alpha,\mu)\right]-\frac {\mathrm{Vol}(M)} {2\pi^2}\right|
		\lesssim \frac 1{\log T}, \quad T\gg 1.
	\end{gather*}
\end{lemma}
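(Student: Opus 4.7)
The plan is to sandwich $\Ww_2^2(\mu_{T,\epsilon}^\alpha,\mu)$ between two quantities equal to $\|\nabla g_{T,\epsilon}^\alpha\|_{L^2(\mu)}^2$ up to a multiplicative error $1+O(\xi)$, and then to compute the expectation of this quadratic form via the spectral decomposition together with a short-time heat-trace asymptotic. I split
\begin{gather*}
	\Ee^\mu[\Ww_2^2(\mu_{T,\epsilon}^\alpha,\mu)]
	= \Ee^\mu\bigl[\Ww_2^2(\mu_{T,\epsilon}^\alpha,\mu)\I_{A_{T,\epsilon}^\xi}\bigr]
	+ \Ee^\mu\bigl[\Ww_2^2(\mu_{T,\epsilon}^\alpha,\mu)\I_{(A_{T,\epsilon}^\xi)^c}\bigr]
\end{gather*}
and choose $\epsilon = \log^\gamma T/T$ together with $\xi \asymp 1/\log T$. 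The coarse bound $\Ww_2^2 \leq (\mathrm{diam}\, M)^2$ and Lemma \ref{A-T} control the complementary contribution by $\epsilon^{-6}\xi^{-4}\exp(-\tfrac12\log^{\gamma-2}T)$ up to constants; this is $o(T^{-1})$ precisely when $\gamma > 3$, which is the reason for the hypothesis on $\gamma$.

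On the event $A_{T,\epsilon}^\xi$, the Hessian bound $\|\nabla^2 g_{T,\epsilon}^\alpha\|_\infty \leq \xi$ combined with the identity $f_{T,\epsilon}^\alpha - 1 = (-L)g_{T,\epsilon}^\alpha$ gives $\|f_{T,\epsilon}^\alpha - 1\|_\infty = O(\xi)$. Expanding $1/\Mscr(1+x) = 1 + O(x)$ at $x = 0$, Lemma \ref{Wp-U} then yields the upper bound $\Ww_2^2(\mu_{T,\epsilon}^\alpha,\mu) \leq (1+O(\xi))\|\nabla g_{T,\epsilon}^\alpha\|_{L^2(\mu)}^2$. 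For the matching lower bound on $A_{T,\epsilon}^\xi$, I would invoke the transport linearisation of Ambrosio--Stra--Trevisan \cite{AMB}: smallness of the Hessian makes $\mathrm{id} + \nabla g_{T,\epsilon}^\alpha$ a near-identity bi-Lipschitz diffeomorphism pushing $\mu$ approximately onto $\mu_{T,\epsilon}^\alpha$, and its Benamou--Brenier cost equals $\|\nabla g_{T,\epsilon}^\alpha\|_{L^2(\mu)}^2$ up to a $1 + O(\xi)$ factor. Since $\xi = O(1/\log T)$, the sandwich then reduces the lemma to the computation of $\Ee^\mu[\|\nabla g_{T,\epsilon}^\alpha\|_{L^2(\mu)}^2]$.

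Starting from \eqref{E-g} and applying Lemma \ref{PHI}~\ref{PHI-a} with $\alpha = 1/2$ (so that $\Ee^\mu[|\psi_i^{1/2}(T)|^2] = 2/\sqrt{\lambda_i} + O(1/(\lambda_i T))$), one obtains
\begin{gather*}
	\Ee^\mu\bigl[\|\nabla g_{T,\epsilon}^\alpha\|_{L^2(\mu)}^2\bigr]
	= \frac 2 T \sum_{i=1}^\infty \frac{\eup^{-2\lambda_i\epsilon}}{\lambda_i^{3/2}} + O(T^{-2}).
\end{gather*}
Writing $\lambda_i^{-3/2} = \tfrac{2}{\sqrt{\pi}}\int_0^\infty s^{1/2}\eup^{-\lambda_i s}\,\dd s$ and exchanging sum and integral, the short-time heat-trace expansion $\sum_{i\geq 1}\eup^{-\lambda_i t} = \mathrm{Vol}(M)/(4\pi t)^{3/2} + O(t^{-1/2})$ as $t\downarrow 0$, combined with the substitution $s = 2\epsilon u$ and the elementary identity $\int_0^K u^{1/2}/(u+1)^{3/2}\,\dd u = \log(1+K) + O(1)$, identifies the leading term as $\mathrm{Vol}(M)/(4\pi^2)\log(1/\epsilon)$, where the prefactor comes from $\tfrac{2}{\sqrt{\pi}}\cdot(4\pi)^{-3/2} = 1/(4\pi^2)$. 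Inserting this back and multiplying by $T/\log T$ yields the claimed limit $\mathrm{Vol}(M)/(2\pi^2)$ with the stated remainder. The main obstacle is producing the matching lower bound on $A_{T,\epsilon}^\xi$ with leading constant exactly one; the upper bound of Lemma \ref{Wp-U} is not inherently sharp, and so the transport linearisation of \cite{AMB} must be adapted carefully to the manifold so that both the Jacobian distortion and the displacement error remain controlled by $\xi$.
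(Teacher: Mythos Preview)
Your strategy---split on $A_{T,\epsilon}^\xi$, reduce to $\|\nabla g_{T,\epsilon}^\alpha\|_{L^2(\mu)}^2$ on the good set, then evaluate that quadratic form via the spectral expansion and a heat-trace asymptotic---is exactly what the paper does, and your computation of $\sum_i \eup^{-2\lambda_i\epsilon}\lambda_i^{-3/2}$ via $\lambda_i^{-3/2}=\tfrac{2}{\sqrt\pi}\int_0^\infty s^{1/2}\eup^{-\lambda_i s}\,\dd s$ is an equally valid variant of the paper's two-step representation $\lambda_i^{-1/2}=\tfrac{1}{\sqrt\pi}\int_0^\infty s^{-1/2}\eup^{-\lambda_i s}\,\dd s$ together with $\lambda_i^{-1}=\int_0^\infty\eup^{-\lambda_i u}\,\dd u$; both yield the leading term $\tfrac{\mathrm{Vol}(M)}{4\pi^2}\log\epsilon^{-1}$.

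The one place where you diverge from the paper is the sandwich itself. Your upper bound via the $\Mscr$-weighted form in Lemma~\ref{Wp-U} is fine, but the lower bound is where you correctly identify the difficulty and leave it open. The paper does \emph{not} treat the two bounds separately: on a Riemannian manifold the map ``$\mathrm{id}+\nabla g$'' has no meaning, and the right object is the push-forward $\hat\mu_{T,\epsilon}^\alpha:=\exp(\nabla g_{T,\epsilon}^\alpha)_{\#}\mu$ under the Riemannian exponential map. Two facts from \cite{G2019} and \cite{AG2019} then do all the work at once: (i) when $\|\nabla^2 g_{T,\epsilon}^\alpha\|_\infty$ is small, $g_{T,\epsilon}^\alpha$ is $c$-concave and one has the \emph{exact} identity $\Ww_2(\hat\mu_{T,\epsilon}^\alpha,\mu)=\|\nabla g_{T,\epsilon}^\alpha\|_{L^2(\mu)}$; (ii) on $A_{T,\epsilon}^\xi$ one has $\Ww_2^2(\mu_{T,\epsilon}^\alpha,\hat\mu_{T,\epsilon}^\alpha)\lesssim\xi^2\|\nabla g_{T,\epsilon}^\alpha\|_{L^2(\mu)}^2$. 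The lower triangle inequality then gives
\[
\bigl|\Ww_2(\mu_{T,\epsilon}^\alpha,\mu)-\|\nabla g_{T,\epsilon}^\alpha\|_{L^2(\mu)}\bigr|
\leq \Ww_2(\mu_{T,\epsilon}^\alpha,\hat\mu_{T,\epsilon}^\alpha)\lesssim \xi\,\|\nabla g_{T,\epsilon}^\alpha\|_{L^2(\mu)}
\]
on $A_{T,\epsilon}^\xi$, which is precisely your $(1+O(\xi))$ sandwich, obtained symmetrically in one stroke. So the ``careful manifold adaptation'' you flag is already available off the shelf; citing \cite{G2019} and \cite{AG2019} (rather than \cite{AMB}, which is flat) closes the gap.
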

\begin{proof}
We use Lemma~\ref{A-T} with $\xi=1/\log T$ and $\epsilon=\log^\gamma T /T$ for large enough $T>1$.  This gives
\begin{gather*}
	\Pp^\mu\left((A_{T,\epsilon}^\xi)^c\right)
	\lesssim \exp\left[- c\log^{\gamma-2}T\right], \quad T\gg 1
\end{gather*}
for some constant $c>0$. We denote by $\hat{\mu}_{T,\epsilon}^\alpha := {\exp\left(\nabla  g_{T,\epsilon}^\alpha \right)_{\#}\mu}$ the push-forward (image measure) of $\mu$ under the mapping $\exp(\nabla  g_{T,\epsilon}^\alpha\normal )$, i.e.\
\begin{gather*}
	\exp(\nabla  g_{T,\epsilon}^\alpha\normal )_{\#}\mu (B)
	:= \mu\left((\exp(\nabla  g_{T,\epsilon}^\alpha\normal ))^{-1}(B)\right)
\end{gather*}
for all Borel sets $B\subset M$. From \cite[Corollary 3.3]{G2019} or \cite[(6.3)]{AG2019} we know that
\begin{gather*}
	\Ww_2\left(\hat{\mu}_{T,\epsilon}^\alpha,\mu\right)
	= \|\nabla  g_{T,\epsilon}^\alpha\normal \|_{L^2(\mu)}.
\end{gather*}
Moreover, the following inequality holds on the set $A_{T,\epsilon}^\xi$, see \cite[(6.2)]{AG2019}:
\begin{gather*}
	\Ww_2^2\left(\mu_{T,\epsilon}^\alpha,\hat{\mu}_{T,\epsilon}^\alpha\right)
	\lesssim \xi^2\mu(|\nabla  g_{T,\epsilon}^\alpha\normal |^2).
\end{gather*}
The lower triangle inequality now gives
\begin{gather*}
	\left|\Ww_2\left(\mu_{T,\epsilon}^\alpha,\mu\right) - \Ww_2\left(\hat{\mu}_{T,\epsilon}^\alpha,\mu\right)\right|
	\leq \Ww_2\left(\mu_{T,\epsilon}^\alpha,\hat{\mu}_{T,\epsilon}^\alpha\right)
\end{gather*}
and this leads to
\begin{gather}\begin{aligned}\label{W2-s}
	\Ee^\mu&\left[\left|\Ww_2\left(\mu_{T,\epsilon}^\alpha,\mu\right)
	-\|\nabla  g_{T,\epsilon}^\alpha\normal \|_{L^2(\mu)}\right|^2\right]\\
	&=\Ee^\mu\left[\left|\Ww_2\left(\mu_{T,\epsilon}^\alpha,\mu\right)
	-\Ww_2\left(\hat{\mu}_{T,\epsilon}^\alpha,\mu\right)\right|^2\right]\\
	&\lesssim \Pp^\mu\left((A_{T,\epsilon}^\xi)^c\right) + \Ee^\mu\left[\I_{A_{T,\epsilon}^{\xi}}\Ww_2^2(\mu_{T,\epsilon}^\alpha,\hat{\mu}_{T,\epsilon}^\alpha)\right]\\
	&\lesssim \Pp^\mu\left((A_{T,\epsilon}^\xi)^c\right) + \Ee^\mu\left[\I_{A_{T,\epsilon}^{\xi}}\xi^2\mu(|\nabla  g_{T,\epsilon}^\alpha\normal |^2)\right]\\
	&\lesssim \exp\left[- c\log^{\gamma-2}T\right] + \xi^2\Ee^\mu\left[\mu\left(|\nabla  g_{T,\epsilon}^\alpha\normal |^2\right)\right].
\end{aligned}\end{gather}
We will now find an upper bound for the term $\Ee[\mu(|\nabla  g_{T,\epsilon}^\alpha\normal |^2)]$. In dimension $d=3$, \cite[Corollary 3.2]{CR} gives the following short-time asymptotic behaviour of the heat trace:
\begin{gather}\label{trace}
	\sum_{i=1}^\infty \eup^{-\lambda_i \epsilon}
	= \mathrm{tr}\,\eup^{t L}-1
	= \frac{\mathrm{Vol}(M)}{(4\pi \epsilon)^{\frac 3 2}} + O(\epsilon^{-\frac 1 2}),
	\quad \epsilon\to 0.
\end{gather}
From \eqref{lam} we take that $\lambda_i\sim i^{\frac 2 3}$ if $d=3$. Thus,
\begin{gather*}
	\sum_{i=1}^\infty\int_1^\infty \frac 1 {\lambda_i}\eup^{-(2\epsilon+s)\lambda_i}s^{-\frac 1 2}\,\dd s
	\leq \sum_{i=1}^\infty\int_1^\infty \frac 1 {\lambda_i}\eup^{-\lambda_i s}\,\dd s
	= \sum_{i=1}^\infty\frac {1}{\lambda_i^2}\eup^{-\lambda_i}
	\leq \sum_{i=1}^\infty\frac 1 {i^{\frac 4 3}}
	<\infty.
\end{gather*}
This, together with \eqref{trace} and with a few elementary calculations shows
\begin{align*}
	\sum_{i=1}^\infty\frac {\eup^{-2\lambda_i\epsilon}}{\lambda_i^{\frac 3 2}}
	&= \sum_{i=1}^\infty\frac 1 {\Gamma(\frac 1 2)}\int_0^\infty\frac 1 {\lambda_i}\eup^{-(2\epsilon+s)\lambda_i}s^{-\frac 1 2}\,\dd s\\
	&= \sum_{i=1}^\infty\frac 1 {\sqrt{\pi}}\int_0^1\frac 1 {\lambda_i}\eup^{-(2\epsilon+s)\lambda_i}s^{-\frac 1 2}\,\dd s+O(1)\\
	&= \frac 1 {\sqrt{\pi}}\int_0^1\int_{2\epsilon+s}^\infty s^{-\frac 1 2} \sum_{i=1}^\infty\eup^{-\lambda_i u}\,\dd u \,\dd s+O(1)\\
	&= \frac 1 {\sqrt{\pi}}\int_0^1\int_{2\epsilon+s}^2 s^{-\frac 1 2} \sum_{i=1}^\infty\eup^{-\lambda_i u}\,\dd u \,\dd s+\frac 1 {\sqrt{\pi}}\int_0^1 \int_2^\infty s^{-\frac 1 2} \sum_{i=1}^\infty\eup^{-\lambda_i u}\,\dd u \,\dd s+O(1)\\
	&= \frac 1 {\sqrt{\pi}}\int_0^1\int_{2\epsilon+s}^2\frac{\mathrm{Vol}(M)}{(4\pi u)^{\frac 3 2}}+O(u^{-\frac 1 2})\,\dd u s^{-\frac 1 2}\,\dd s+O(1)\\
	&= \frac{\mathrm{Vol}(M)} {4\pi^2}\int_0^1(2\epsilon+s)^{-\frac 1 2}s^{-\frac 1 2}\,\dd s+O(1)\\
	&= \frac{\mathrm{Vol}(M)} {2\pi^2}\log\left(\sqrt{1+\frac 1 {2\epsilon}}+\sqrt{\frac 1 {2\epsilon}}\right)+O(1),\quad \epsilon\to 0.
\end{align*}
In the last equality we use the following well-known primitive
\begin{gather*}
	\int \frac {\dd x}{\sqrt{x^2+2\epsilon x}} = \log\left(\epsilon + x + \sqrt{x^2+2\epsilon x}\right) + C.
\end{gather*}
Recall that $\epsilon=\log^\gamma T\normal /T$. Using \eqref{E-g} and $(\alpha,d)=(1/2,3)$ in \eqref{phiL-0}, we get
\begin{gather}\begin{aligned}\label{Limit}
	&\left|\frac T {\log T}\Ee^\mu\left[\|\nabla  g_{T,\epsilon}^\alpha \|_{L^2(\mu)}^2\right]
	-\frac {\mathrm{Vol}(M)} {2\pi^2}\right|\\
	&=\left|\frac 1 {\log T }\sum_{i=1}^\infty \frac{\eup^{-2\lambda_i \epsilon}}{\lambda_i}\Ee^\mu\left[|\psi_i^\alpha(T)|^2\right]
	-\frac {\mathrm{Vol}(M)} {2\pi^2}\right|\\
	&\leq \left|\frac 1 {\log T}\sum_{i=1}^\infty \frac{\eup^{-2\lambda_i \epsilon}}{\lambda_i} \left[\Ee^\mu[|\psi_i^\alpha(T)|^2]-\frac 2 {\sqrt{\lambda_i}}\right]\right|
	+ \left|\frac 2 {\log T }\sum_{i=1}^\infty \frac{\eup^{-2\lambda_i \epsilon}}{\lambda_i^{\frac 3 2}}-\frac {\mathrm{Vol}(M)} {2\pi^2}\right|\\
	&\leq \frac 1 {T\log T}\sum_{i=1}^\infty \frac{\eup^{-2\lambda_i\epsilon}}{\lambda_i^2}
	+ \frac {\mathrm{Vol}(M)} {\pi^2}
	\left|\frac{\log\left[\sqrt{1+\frac 1 {2\epsilon}}+\sqrt{\frac 1 {2\epsilon}}\right]}{\log T }
	- \frac 12\right| + O\left(\frac 1{\log T}\right)\\
	&\lesssim \frac 1 {T\log T } + \frac{\log\log T }{\log T } + O\left(\frac 1{\log T}\right)
	\lesssim \frac{\log\log T }{\log T },\quad T\gg 1.
\end{aligned}\end{gather}
In the calculation above we use our choice of $\epsilon$ along with
\begin{gather*}
	\sum_{i=1}^\infty\frac{\eup^{-2\lambda_i \epsilon}}{\lambda_i^2}
	\lesssim \sum_{i=1}^\infty \frac{1}{i^{\frac 4 3}}
	< \infty,
\intertext{and}
	\left|\log\left[\sqrt{1+\frac T {2\log^\gamma T}} + \sqrt{\frac T {2\log^\gamma T }}\right]
	-\frac 1 2 \log T \right|\\
	\leq \log \sqrt{T} - \log \sqrt{\frac T{2\log^\gamma T}}
	= O(\log \log T ),\quad  T\gg 1.
\end{gather*}
From \eqref{Limit} we also see that
\begin{gather*}
	 \Ee^\mu\left[\|\nabla  g_{T,\epsilon}^\alpha\normal \|_{L^2(\mu)}^2\right] \normal
	= \Ee^\mu\left[\mu\left(|\nabla  g_{T,\epsilon}^\alpha\normal |^2\right)\right]
	= O\left(T^{-1}\log T \right),\quad  T\gg 1.
\end{gather*}
This,  \eqref{W2-s} and our choice $\xi=1/\log T$,   yield
\begin{gather}\label{W2^2}
	\Ee^\mu\left[\left|\Ww_2\left(\mu_{T,\epsilon}^\alpha,\mu\right) - \|\nabla  g_{T,\epsilon}^\alpha\normal \|_{L^2(\mu)}\right|^2\right]
	=
	O\left(\frac 1{T\log T}\right),\quad   T\gg 1.
\end{gather}
Furthermore, the Cauchy-Schwarz inequality shows that
\begin{align*}
	&\Ee^\mu\left[\left|\Ww_2^2\left(\mu_{T,\epsilon}^\alpha,\mu\right)-\|\nabla g_{T,\epsilon}^\alpha\normal \|_{L^2(\mu)}^2\right|\right]\\
	&\quad= \Ee^\mu\left[\left|\left(\Ww_2\left(\mu_{T,\epsilon}^\alpha,\mu\right) -\|\nabla  g_{T,\epsilon}^\alpha\normal \|_{L^2(\mu)}\right)
	\left(\Ww_2\left(\mu_{T,\epsilon}^\alpha,\mu\right) + \|\nabla g_{T,\epsilon}^\alpha\normal \|_{L^2(\mu)}\right)\right|\right]\\
	&\quad\leq \left(\Ee^\mu\left[\left|\Ww_2\left(\mu_{T,\epsilon}^\alpha,\mu\right)-\|\nabla g_{T,\epsilon}^\alpha\normal \|_{L^2(\mu)}\right|^2\right]\right)^{1/2}\times\mbox{}\\
	&\qquad\qquad\mbox{}\times
 	\left(2\Ee^\mu\left[\left|\Ww_2\left(\mu_{T,\epsilon}^\alpha,\mu\right) - \|\nabla  g_{T,\epsilon}^\alpha\normal \|_{L^2(\mu)}\right|^2\right] + 4\Ee^\mu\left[\mu\left(|\nabla g_{T,\epsilon}^\alpha\normal |^2\right)\right]\right)^{1/2}\\
	&\quad\lesssim T^{-\frac 1 2}\log^{-\frac 1 2}T \left(T^{-\frac 1 2}\log^{-\frac 1 2}T+T^{-\frac 1 2}\log^{\frac 1 2}T\right)
	= O\left(T^{-1}\right).
\end{align*}
This and \eqref{Limit} finally prove
\begin{gather*}
	\left|\frac T {\log T }\Ee^\mu\left[\Ww_2^2\left(\mu_{T,\epsilon}^\alpha,\mu\right)\right] - \frac{\mathrm{Vol}(M)} {2\pi^2}\right|
	= O(\log^{-1}(T)).
	\qedhere
\end{gather*}
\end{proof}

Recall our choice $\epsilon=\log^\gamma T \normal/T$.
\begin{lemma}\label{eps}
For some $\gamma>3$ it holds that
\begin{gather*}
	\Ee^\mu\left[\Ww_2^2\left(\mu_T^\alpha,\mu_{T,\epsilon}^\alpha\right)\right]
	\lesssim \frac{\log\log T }{T},\quad T\gg 1.
\end{gather*}
\end{lemma}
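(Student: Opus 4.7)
The basic coupling used in the proof of Lemma~\ref{pp} already gives $\Ee^\mu[\Ww_2^2(\mu_T^\alpha,\mu_{T,\epsilon}^\alpha)]\lesssim\epsilon=\log^\gamma T/T$, which is too weak by a factor of $\log^\gamma T/\log\log T$. A sharper argument is needed.

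My plan is a two-scale decomposition. Fix an intermediate parameter $\epsilon_0\in(0,\epsilon)$ to be optimised, and split by the triangle inequality
\begin{gather*}
	\Ww_2(\mu_T^\alpha,\mu_{T,\epsilon}^\alpha)
	\leq\Ww_2(\mu_T^\alpha,\mu_{T,\epsilon_0}^\alpha)+\Ww_2(\mu_{T,\epsilon_0}^\alpha,\mu_{T,\epsilon}^\alpha).
\end{gather*}
The first summand is handled by the basic coupling, yielding $\Ee^\mu[\Ww_2^2(\mu_T^\alpha,\mu_{T,\epsilon_0}^\alpha)]\lesssim\epsilon_0$. For the second summand both measures are absolutely continuous, so I would apply the Benamou--Brenier formula along the heat-flow interpolation $s\mapsto\mu_{T,s}^\alpha$, $s\in[\epsilon_0,\epsilon]$, to obtain
\begin{gather*}
	\Ww_2^2(\mu_{T,\epsilon_0}^\alpha,\mu_{T,\epsilon}^\alpha)
	\leq(\epsilon-\epsilon_0)\int_{\epsilon_0}^\epsilon I(\mu_{T,s}^\alpha\mid\mu)\,\dd s,
\end{gather*}
where $I(\cdot\mid\mu)$ is the Fisher information.

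On a high-probability event $\{f_{T,s}^\alpha\geq c_0\}$ — which I would establish uniformly in $s\in[\epsilon_0,\epsilon]$ by applying Proposition~\ref{bern} to the test function $g(y)=p_s(\cdot,y)-1$ in the spirit of Lemma~\ref{A-T} — we have $I\leq c_0^{-1}\|\nabla f_{T,s}^\alpha\|_{L^2(\mu)}^2$. Expanding spectrally, the moment bound $\Ee^\mu[|a_i|^2]\lesssim T^{-1}\lambda_i^{-1/2}$ of Lemma~\ref{PHI}~\ref{PHI-a} combined with the heat-trace-type estimate $\sum_i\lambda_i^{1/2}\eup^{-2\lambda_is}\lesssim s^{-2}$ valid in dimension $d=3$ gives $\Ee^\mu[\|\nabla f_{T,s}^\alpha\|_{L^2(\mu)}^2]\lesssim(Ts^2)^{-1}$. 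Integrating in $s$ then yields $\Ee^\mu[\Ww_2^2(\mu_{T,\epsilon_0}^\alpha,\mu_{T,\epsilon}^\alpha)]\lesssim\epsilon/(T\epsilon_0)$, modulo a super-polynomially small contribution from the complementary event, which is absorbed using the trivial bound $\Ww_2\leq\mathrm{diam}(M)$.

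The main obstacle is that the resulting trade-off $\epsilon_0+\epsilon/(T\epsilon_0)$ is optimised at $\epsilon_0\sim\sqrt{\epsilon/T}$, giving only $\sqrt{\epsilon/T}\sim\log^{\gamma/2}T/T$, which is still short of $\log\log T/T$ by a power of $\log T$. Closing this final gap should require exploiting the much stronger Hessian bound $\|\nabla^2 g_{T,\epsilon}^\alpha\|_\infty\leq 1/\log T$ holding on the event $A_{T,\epsilon}^\xi$ of Lemma~\ref{A-T}: mimicking the route of \cite{TWZ} in the diffusion case $(\alpha,d)=(1,4)$, one compares $\mu_T^\alpha$ directly with the push-forward $\exp(\nabla g_{T,\epsilon}^\alpha)_{\#}\mu$ introduced in the proof of Lemma~\ref{Limit-M} and controls the correction via the $(\log T)^{-1}$-Hessian estimate together with the super-polynomial tail bound on $(A_{T,\epsilon}^\xi)^c$; the $\log\log T$ factor then emerges from this balance rather than from the heat-flow interpolation above.
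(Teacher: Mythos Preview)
Your two-scale decomposition with an intermediate $\epsilon_0$ and the high-probability event on which the density is bounded below is exactly the skeleton of the paper's proof. The gap is in the PDE estimate you insert between $\mu_{T,\epsilon_0}^\alpha$ and $\mu_{T,\epsilon}^\alpha$. Running Benamou--Brenier along the heat flow and bounding the Fisher information by $c_0^{-1}\|\nabla f_{T,s}^\alpha\|_{L^2(\mu)}^2$ brings in, spectrally, a factor $\lambda_i$ in the sum $\sum_i\lambda_i\eup^{-2\lambda_i s}\Ee^\mu[|\psi_i^\alpha|^2]\sim\sum_i\lambda_i^{1/2}\eup^{-2\lambda_i s}\sim s^{-2}$, and this is what forces the wasteful $(\epsilon-\epsilon_0)$ Cauchy--Schwarz factor and the final $\epsilon/(T\epsilon_0)$ trade-off. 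The paper instead uses the \emph{static} weighted transport inequality (the second part of Lemma~\ref{Wp-U}, or \cite[Theorem~A.1]{W22}): on the event $A_{T,\epsilon}^\xi$ one has $f_{T,\epsilon}^\alpha\geq 1/2$, hence
\[
	\Ww_2^2(\mu_{T,\epsilon'}^\alpha,\mu_{T,\epsilon}^\alpha)
	\lesssim \int_M\bigl|\nabla(-L)^{-1}(f_{T,\epsilon}^\alpha-f_{T,\epsilon'}^\alpha)\bigr|^2\,\dd\mu.
\]
Spectrally this carries $\lambda_i^{-1}$ instead of $\lambda_i$, giving $T^{-1}\sum_i\lambda_i^{-1}(\eup^{-\lambda_i\epsilon'}-\eup^{-\lambda_i\epsilon})^2\Ee^\mu[|\psi_i^\alpha|^2]\lesssim T^{-1}\int_{\epsilon'}^{\epsilon}\sum_i\lambda_i^{-1/2}\eup^{-2\lambda_i u}\,\dd u\lesssim T^{-1}\log(\epsilon/\epsilon')$. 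Now take $\epsilon'=\log\log T/T$: the basic coupling gives $\epsilon'$ and the $H^{-1}$ bound gives $T^{-1}\log(\log^\gamma T/\log\log T)\lesssim T^{-1}\log\log T$, so both pieces land exactly at $\log\log T/T$.

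Two further remarks. First, the paper only needs the density lower bound at the single scale $s=\epsilon$ (read off from $\|f_{T,\epsilon}^\alpha-1\|_\infty\lesssim\|\nabla^2 g_{T,\epsilon}^\alpha\|_\infty\leq\xi$ on $A_{T,\epsilon}^\xi$), not uniformly in $s\in[\epsilon_0,\epsilon]$ as your Fisher-information route would require. Second, your closing suggestion --- routing through the push-forward $\exp(\nabla g_{T,\epsilon}^\alpha)_{\#}\mu$ --- is the mechanism of Lemma~\ref{Limit-M}, not of the present lemma; here no such comparison is needed once the $H^{-1}$ bound replaces the Fisher information.
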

\begin{proof}
Pick $\xi=1/\log T$, $T\ge 1$. By  Lemma \ref{A-T}, there exists some constant $c>0$ such that
\begin{gather}\begin{aligned}\label{W2-eps}
	\Ee^\mu\left[\Ww_2^2\left(\mu_T^\alpha,\mu_{T,\epsilon}^\alpha\right)\right]
	&= \Ee^\mu\left[\Ww_2^2\left(\mu_T^\alpha,\mu_{T,\epsilon}^\alpha\right)\I_{A_{T,\epsilon}^\xi}\right]
	+ \Ee^\mu\left[\Ww_2^2\left(\mu_T^\alpha,\mu_{T,\epsilon}^\alpha\right)\I_{(A_{T,\epsilon}^\xi)^c}\right]\\
	&\lesssim \Ee^\mu\left[\Ww_2^2\left(\mu_T^\alpha,\mu_{T,\epsilon}^\alpha\right)\I_{A_{T,\epsilon}^\xi}\right]
	+ \Pp^\mu\left((A_{T,\epsilon}^\xi)^c\right)\\
	&\lesssim \Ee^\mu\left[\Ww_2^2\left(\mu_T^\alpha,\mu_{T,\epsilon}^\alpha\right)\I_{A_{T,\epsilon}^\xi}\right]
	+ \exp\left[-c \log^{\gamma-2}(1+T)\normal\right].
\end{aligned}\end{gather}
It is, therefore, enough to bound $\Ee\left[\Ww_2^2\left(\mu_T^\alpha,\mu_{T,\epsilon}^\alpha\right)\I_{A_{T,\epsilon}^\xi}\right]$. On the event $A_{T,\epsilon}^\xi$, we have
\begin{gather*}
	\|f_{T,\epsilon}^\alpha-1\|_\infty=\|L g_{T,\epsilon}^\alpha\|_\infty\lesssim \|\nabla^2 g_{T,\epsilon}^\alpha\|_\infty
	\leq \frac 1{\log T},\quad T\gg 1.
\end{gather*}
This shows that, for sufficiently large $T$ we have $\inf_{x\in M} f_{T,\epsilon}^\alpha(x)\geq 1/2$. Combining this with \cite[Theorem A.1]{W22} and \eqref{phiL-0}, we get for any $\epsilon'<\epsilon$,
\begin{align*}
	\Ee^\mu\left[\Ww_2^2\left(\mu_{T,\epsilon}^\alpha,\mu_{T,\epsilon'}^\alpha\right)\right]
	&\lesssim \Ee^\mu\left[\int_M \left|\nabla(-L)^{-1}(f_{T,\epsilon}^\alpha-f_{T,\epsilon'}^\alpha)\right|^2\,\dd\mu\right]\\
	&= \frac 1 T \sum_{i=1}^\infty \frac{\left(\eup^{-\lambda_i \epsilon'}-\eup^{-\lambda_i \epsilon}\right)^2} {\lambda_i} \Ee^\mu\left[|\psi_i^\alpha(T)|^2\right]\\
	&\leq \frac 1 T \sum_{i=1}^\infty \frac{\eup^{-2\lambda_i \epsilon'}-\eup^{-2\lambda_i \epsilon}}{\lambda_i} \Ee^\mu\left[|\psi_i^\alpha(T)|^2\right]\\
	&= \int_{\epsilon'}^\epsilon\frac 1 T \sum_{i=1}^\infty \eup^{-2\lambda_i u} \Ee^\mu\left[|\psi_i^\alpha(T)|^2\right] \dd u\\
	&\lesssim \int_{\epsilon'}^\epsilon \frac 1 T \sum_{i=1}^\infty \eup^{-2\lambda_i u} \left(\lambda_i^{-\frac 1 2}+\lambda_i^{-1}T^{-1}\right) \dd u\\
	&\lesssim \int_{\epsilon'}^\epsilon \frac 1 T \sum_{i=1}^\infty \eup^{-2\lambda_i u}\lambda_i^{-\frac 1 2}\,\dd u\\
	&\lesssim \frac 1T\log\left(\frac{\epsilon}{\epsilon'}\right),\quad  T\gg1.
\end{align*}
For the last inequality we observe that
\begin{gather*}
	\sum_{i=1}^\infty \eup^{-2\lambda_i u}\lambda_i^{-\frac 1 2}
	\lesssim \sum_{i=1}^\infty \eup^{-2i^{2/3}u} \left(i^{2/3}\right)^{-\frac 12}
	\lesssim \int_1^\infty \eup^{-2s^{2/3}u} \left(s^{2/3}\right)^{-\frac 12}\,\dd s
	\lesssim \frac 1u^,\quad u>0.
\end{gather*}
For large enough $T>1$ we may now take $\epsilon'=\log\log T\normal /T$ and combine the above inequality with \eqref{W2-eps}. This finally gives
\begin{gather*}
	\Ee^\mu\left[\Ww_2^2\left(\mu_T^\alpha,\mu_{T,\epsilon}^\alpha\right)\right]
	\lesssim \frac{\log\log T }{T} + \exp\left[-c \log^{\gamma-2}T\normal\right]
	\lesssim \frac{\log\log T\normal }{T},\quad  T\gg 1.
\qedhere
\end{gather*}
\end{proof}

\begin{proof}[Proof of Theorem \ref{Limit-R}]
Combining Lemma \ref{eps} and Theorem \ref{Wpq} for $(\alpha,d)=(\frac 12,3)$, gives
\begin{gather*}\begin{aligned}
	&\left|\Ee^\mu\left[\Ww_2^2\left(\mu_T^\alpha,\mu\right)\right]
	- \Ee^\mu\left[\Ww_2^2\left(\mu_{T,\epsilon}^\alpha,\mu\right)\right]\right|\\
	&\leq \Ee^\mu\left[\Ww_2^2\left(\mu_T^\alpha,\mu_{T,\epsilon}^\alpha\right)\right] + 2\sqrt{\Ee^\mu\left[\Ww_2^2\left(\mu_T^\alpha,\mu_{T,\epsilon}^\alpha\right)\right]
		\Ee^\mu\left[\Ww_2^2\left(\mu_T^\alpha,\mu\right)\right]}\\
	&\lesssim \frac{\sqrt{\log\log T\normal \log T\normal }}{T},\quad  T\gg 1.
\end{aligned}\end{gather*}
This, together with Lemma \ref{Limit-M} shows that
\begin{gather}\begin{aligned}\label{W2-L}
	&\left|\frac T {\log T\normal }\Ee^\mu\left[\Ww_2^2\left(\mu_T^\alpha,\mu\right)\right] - \frac {\mathrm{Vol}(M)} {2\pi^2}\right|\\
	&\quad\leq \left|\frac T {\log(1+T)\normal } \Ee^\mu\left[\Ww_2^2\left(\mu_T^\alpha,\mu\right)\right]
	- \frac T {\log T\normal }\Ee^\mu\left[\Ww_2^2\left(\mu_{T,\epsilon}^\alpha,\mu\right)\right]\right|\\
	&\quad\qquad\mbox{}+ \left|\frac T {\log T\normal } \Ee^\mu\left[\Ww_2^2\left(\mu_{T,\epsilon}^\alpha,\mu\right)\right]
	- \frac {\mathrm{Vol}(M)} {2\pi^2}\right|\\
	&\quad\lesssim \frac T {\log T } \frac{\sqrt{\log\log T \log T }}{T}
	+ \frac 1 {\log T }\\
	&\quad\lesssim \sqrt{\frac{\log\log T }{\log T }},\quad T\gg1.
\end{aligned}\end{gather}
By \cite[Page 11]{TWZ}, there exists some constant $c>0$ such that
\begin{gather*}
	\Ee\left[\rho^2(X_t^x,X_t^\mu)\right]
	\lesssim \eup^{-c t},\quad t>0,
\end{gather*}
where $(X_t^x,X_t^\mu)$ denotes a coupling of the process $X_t$ with initial distributions $\delta_x$ and $\mu$ respectively. Note that we use for couplings $\Pp$ and $\Ee$ without superscript, since the starting law is indicated in the random variable.
Given $\alpha = \frac 12$, $Z=0$ and $\Ee\left[\eup^{-c S_t^{\alpha}}\right] = \eup^{-\sqrt{c}t}$, it follows  from the subordination representation $\Xcal_t^{\alpha} = X_{S_t^\alpha}$  that
\begin{gather*}
	\Ee\left[\rho^2(\Xcal_t^{\alpha,x},\Xcal_t^{\alpha,\mu})\right]
	\lesssim \eup^{-\sqrt{c} t},\quad t>0.
\end{gather*}
We denote by $\mu_T^x$ and $\mu_T^\mu$ the empirical measure of $\Xcal_t^{\alpha,x}$ and $\Xcal_t^{\alpha,\mu}$ respectively. Consequently,
\begin{gather*}\begin{aligned}
	\Ee\left[\Ww_2^2\left(\mu_T^{\alpha,x},\mu_T^{\alpha,\mu}\right)\right]
	\leq \frac 1 T \int_{0}^T \Ee\left[\rho^2(\Xcal_t^{\alpha,x},\Xcal_t^{\alpha,\mu})\right] \dd t
	\lesssim \frac 1 T, \quad T\gg 1.
\end{aligned}\end{gather*}
Combining this with Theorem \ref{Wpq}, yields
\begin{gather*}
	\Ee^x\left[\Ww_2\left(\mu_T^\alpha,\mu\right)\right]
	\lesssim \Ee\left[\Ww_2^2\left(\mu_T^{\alpha,x},\mu_T^{\alpha,\mu}\right)\right]
	+ \Ee^\mu\left[\Ww_2^2\left(\mu_T^\alpha,\mu\right)\right]
	\lesssim \frac{\log T } T,\quad T\gg 1.
\end{gather*}
The above estimate,  the triangle inequality for $\Ww_2$ and Theorem \ref{Wpq} give
\begin{align*}
	&\frac T {\log T  }\left|\Ee^x\left[\Ww_2^2\left(\mu_T^\alpha,\mu\right)\right]
	- \Ee^{\mu}\left[\Ww_2^2\left(\mu_T^\alpha,\mu\right)\right]\right|\\
	&\qquad\lesssim \frac T {\log T } \left(\Ee\left[\Ww_2^2\left(\mu_T^{\alpha,x},\mu_T^{\alpha,\mu}\right)\right]\right)^{\frac 1 2} \left(\Ee^x\left[\Ww_2^2\left(\mu_T^\alpha,\mu\right)\right]\right)^{\frac 1 2}
	\left(\Ee^\mu\left[\Ww_2^2\left(\mu_T^\alpha,\mu\right)\right]\right)^{\frac 1 2}\\
	&\qquad\lesssim \frac 1 {\sqrt{\log T }},\quad T\gg 1.
\end{align*}
Finally, this and \eqref{W2-L}, prove the claim of Theorem \ref{Limit-R}.
\end{proof}

\paragraph{Acknowledgement.}
R.L.\ Schilling is supported by the 6G-Life project (BMFTR Grant No.~16KISK001K) and ScaDS.AI (TU Dresden, Univ.~Leipzig)
centre. B.\ Wu is supported  by the National Key R\& D Program of China (Grant No. 2023YFA1010400, 2022YFA1006003), the National Natural Science Foundation of China
 (Grant No. 12401174), the  Natural Science Foundation-Fujian (Grant No. 2024J08051), Fujian Alliance of Mathematics (Grant No. 2023SXLMQN02), the Education and Scientific Research Project for Young and Middle-aged Teachers in Fujian
Province of China (Grant No. JAT231014) and the Alexander von Humboldt Foundation.

\frenchspacing


\begin{thebibliography}{999}
\bibitem{AKT1984}
M. Ajtai, J. Koml\'{o}s, G. Tusn\'{a}dy:
On optimal matchings. \emph{Combinatorica} \textbf{4} (1984), 259--264.

\bibitem{AG2019}
L. Ambrosio, F. Glaudo:
Finer estimates on the 2-dimensional matching problem.
\emph{J. \'{E}c. polytech. Math.} \textbf{6} (2019), 737--765.

\bibitem{AMB}
L. Ambrosio, F. Stra, D. Trevisan:
A PDE approach to a 2-dimensional matching problem.
\emph{Probab. Theory Relat. Fields} \textbf{173} (2019), 433--477.


\bibitem{AVT} L. Ambrosio, F. Vitillaro, D. Trevisan:
Sharp PDE estimates for random two-dimensional bipartite matching with power cost function.
\emph{arXiv}: 2405.09397.

\bibitem{BLG}
D. Bakry, I. Gentil, M. Ledoux:
\emph{Analysis and Geometry of Markov Diffusion Operators}.
Springer, Grundlehren der mathematischen Wissenschaften \textbf{348}, Cham 2014.

\bibitem{Borda}
B. Borda:
Empirical measures and random walks on compact spaces in the quadratic Wasserstein metric.
\emph{Ann. Inst. Henri Poincar\'e Probab. Stat.} \textbf{59} (2023), 2017--2035.

\bibitem{Chavel} I. Chavel: \emph{Riemannian Geometry}. Cambridge University Press, Cambridge Stud. Adv. Math. \textbf{98}, Cambridge 2006.


\bibitem{CR}
N. Charalambous, J. Rowlett:
The heat trace for the drifting Laplacian and Schr\"{o}dinger operators on manifolds.
\emph{Asian J. Math.} \textbf{23} (2019),  539--559.

\bibitem{deng-et-al}
C-S. Deng, R.L. Schilling, Y-H. Song:
Subgeometric rates of convergence for Markov processes under subordination.
\emph{Adv. Appl. Prob.} \textbf{49} (2017), 162--181.

\bibitem{DU}
K. Du, Y. Jiang, J. Li:
Empirical approximation to invariant measures for McKean-Vlasov processes: mean-field interaction vs self-interaction.
\emph{Bernoulli} \textbf{29} (2023), 2492--2518.

\bibitem{gao2014bernstein}
F. Gao, A. Guillin,  L. Wu:
Bernstein-type concentration inequalities for symmetric Markov processes.
\emph{Theory Probab. Appl.} \textbf{58} (2014), 358--382.

\bibitem{G2019}
F. Glaudo: On the $c$-concavity with respect to the quadratic cost on a manifold.
\emph{Nonlinear Anal.} \textbf{178} (2019), 145--151.

\bibitem{GT24}
M. Goldman, D. Trevisan:
Optimal transport methods for combinatorial optimization over two random point sets.
\emph{Probab. Theory Related Fields} \textbf{118} (2024), 1315--1384.

\bibitem{Gry}
A. Grigory'an: \emph{Heat Kernels and Analysis on Manifolds}.
American Mathematical Society, Providence (RI); International Press, Boston (MA) 2009.

\bibitem{Hebey} E. Hebey: \emph{Sobolev Spaces on Riemannian Manifolds}. Springer, Lecture Notes in
 Math. \textbf{1635}, Berlin 1996.


\bibitem{HGT}
M. Huesmann, M. Goldman, D. Trevisan:
Asymptotics for random quadratic transportation costs. \emph{arXiv}: 2409.08612.

\bibitem{HT}
M. Huesmann, F. Mattesini, D. Trevisan:
Wasserstein asymptotics for the empirical measure of fractional Brownian motion on a flat torus.
\emph{Stoch. Proc. Appl.} \textbf{155} (2023), 1--26.

\bibitem{K}
O. Kallenberg:
\emph{Foundations of modern probability}. Springer, New York 1997.

\bibitem{L17}
M. Ledoux: On optimal matching of Gaussian samples.   \emph{J. Math. Sci.} \textbf{238} (2019), 495--522.

\bibitem{LW1}
H.  Li, B. Wu:
Wasserstein convergence for empirical measures of subordinated Dirichlet diffusions on Riemannian manifolds.
\emph{arXiv}: 2206.03901.

\bibitem{LW2}
H.  Li, B. Wu:
Wasserstein convergence for conditional empirical measures of subordinated Dirichlet diffusions on Riemannian manifolds.
\emph{Commun. Pure Appl. Anal.} \textbf{23} (2024),  546--576.

\bibitem{LW3}
H.  Li, B. Wu:
Wasserstein convergence rates for empirical measures of subordinated processes on noncompact manifolds.
\emph{J. Theoret. Probab.} \textbf{36} (2023),   1243--1268.

\bibitem{LW-L}
H.  Li, B. Wu:
Asymptotics of Kantorovich distance for empirical measures of the Laguerre model.
\emph{Proc. Amer. Math. Soc.} \textbf{153} (2025), 3619--3634.


\bibitem{MT24+}
M. Mariani, D. Trevisan:
Wasserstein asymptotics for  Brownian motion on the flat torus and Browninan interlacements.
\emph{Stoch. Proc. Appl.} \textbf{183} (2025), 104595, 46pp.

\bibitem{RS}
M.-K. von Renesse, K.-T. Sturm:
Transport inequalities, gradient estimates, entropy, and Ricci curvature.
\emph{Comm. Pure Appl. Math.} \textbf{58} (2005), 923--940.

\bibitem{S}
N. Sandri\'{c}:
A note on the Birkhoff ergodic theorem. \emph{Results Math.} \textbf{72} (2017),  715--730.


\bibitem{SSV}
R.L. Schilling, R. Song, Z. Vondra\v{c}ek: \emph{Bernstein Functions}. De Gruyter, Studies in Mathematics \textbf{37}, Berlin 2012.


\bibitem{Ta}
M. Talagrand: \emph{Upper and Lower Bounds of Stochastic Processes}. Springer, Ergeb. Math. Grenzgeb. \textbf{60}, Springer, Berlin 2014.

\bibitem{TWZ}
D. Trevisan, F.-Y. Wang, J.-X. Zhu:
Wasserstein asymptotics for empirical measures of diffusions on four dimensional closed manifolds.
 \emph{Electron. Commun. Probab.} \textbf{30} (2025), 1--13 (paper no.~68). 


\bibitem{V}
C.  Villani: \emph{Optimal transport: Old and New}. Springer, Grundlehren math.\ Wiss. \textbf{338}, Berlin 2009.

\bibitem{W05}
F.-Y. Wang: \emph{Functional Inequalities, Markov Semigroups and Spectral Theory}. Science Press, Beijing 2005.

\bibitem{W14}
F.-Y. Wang: \emph{Analysis for Diffusion Processes on Riemannian Manifolds}. World Scientific, Singapore 2014.

\bibitem{W21}
F.-Y. Wang:
Precise limit in Wasserstein distance for conditional empirical measures of Dirichlet diffusion processes.
\emph{J. Funct. Anal.} \textbf{280} (2021), 108998, 23pp.

\bibitem{W22}
F.-Y. Wang:
Wasserstein convergence rate for empirical measures on noncompact manifolds.
\emph{Stoch. Proc. Appl.} \textbf{144} (2022), 271--287.

\bibitem{WAAP}
F.-Y. Wang:
Convergence in Wasserstein distance for empirical Measures of semilinear SPDEs.
\emph{Ann. Appl. Probab.} \textbf{33} (2023), 70--84.

\bibitem{WJEMS}
F.-Y. Wang:
Convergence  in  Wasserstein distance for empirical measures of Dirichlet diffusion processes on manifolds.
\emph{J. Eur. Math. Soc.} \textbf{25} (2023),  3695--3725.

\bibitem{Wang23NS}
F.-Y. Wang:
Convergence in Wasserstein distance for empirical measures of non-symmetric subordinated diffusion processes.
\emph{arXiv}: 2301.08420.

\bibitem{WW}
F.-Y. Wang, B. Wu:
Wasserstein convergence for empirical measures of subordinated diffusion processes on Riemannian manifolds.
\emph{Potential Anal.} \textbf{59} (2023), 933--954.

\bibitem{WWZ}
F.-Y. Wang, B. Wu, J.-X. Zhu:
Convergence rate and renormalized limit in Wasserstein distance for empirical measures of diffusion processes.
\emph{arXiv}: 2408.09116.

\bibitem{WZ}
F.-Y. Wang, J.-X. Zhu:
Limit theorems in Wasserstein distance for empirical measures of diffusion processes on Riemannian manifolds.
\emph{Ann. Inst. Henri Poincar\'{e} Probab. Stat.} \textbf{59} (2023),  437--475.

\bibitem{WZ-A}
B. Wu, J.-X. Zhu:
Wasserstein convergence rates for empirical measures of random subsequence of  $\{n\alpha\}$.
\emph{Stoch. Proc. Appl.} \textbf{181} (2025), 104534, 20pp.

\bibitem{Wu2}
L. Wu:
Moderate deviations of dependent random variables related to CLT.
\emph{Ann. Probab.} \textbf{23} (1995), 420--445.

\bibitem{wu2000deviation}
L. Wu: A deviation inequality for non-reversible Markov processes.
\emph{Ann. Inst. Henri Poincar\'{e} Probab. Stat.} \textbf{36} (2000), 435--445.

\bibitem{YGD}
H. Ye, J. Gao,  Y. Ding:
A generalized Gronwall inequality and its application to a fractional differential equation.
\emph{J. Math. Anal. Appl.} \textbf{328} (2007), 1075--1081.

\bibitem{Zhu2021}
J.-X. Zhu:
Some results on the optimal matching problem for the Jacobi model.
\emph{Potential Anal.} \textbf{55} (2021), 167--188.

\bibitem{Zhu}
J.-X. Zhu:
Asymptotic behavior of Wasserstein distance for weighted empirical measures of diffusion processes on compact Riemannian manifolds.
\emph{Markov Process. Related Fields} \textbf{30} (2024), 357--397.
\end{thebibliography}
\end{document}